\newtheorem{theorem}{Theorem}[section]
\newtheorem{lemma}[theorem]{Lemma}
\newtheorem{definition}[theorem]{Definition}
\theoremstyle{definition}
\newtheorem{remark}[theorem]{Remark}
\newtheorem{notation}[theorem]{Notation} 
\numberwithin{equation}{section}
\newcommand{\R}{\mathbb{R}}
\newcommand{\Z}{\mathbb{Z}}
\newcommand{\C}{\mathbb{C}} 
\newcommand{\T}{\mathbb{T}} 
\newcommand{\N}{\mathbb{N}} 
\newcommand{\MAH}{\mathbb{H}}
\newcommand{\MS}{\mathcal{S}}
\newcommand{\MT}{\mathcal{T}}
\newcommand{\MN}{\mathcal{N}}
\newcommand{\MK}{\mathcal{K}}
\newcommand{\MR}{\mathcal{R}}
\newcommand{\MG}{\mathcal{G}}
\newcommand{\MY}{\mathcal{Y}}
\newcommand{\MZ}{\mathcal{Z}}
\newcommand{\MW}{\mathcal{W}}
\newcommand{\wh}{\widehat}
\newcommand{\ol}{\overline}
\newcommand{\wt}{\widetilde}
\newcommand{\Lip}{\mathrm{Lip\,}}
\title{Invariant manifolds of maps and vector fields with nilpotent parabolic tori}
\begin{document}
	 \date{}
	\maketitle
	
	\vspace{-1.5cm}
	 
	\centerline{Clara Cufí-Cabré}
	\medskip
	{\footnotesize
		\centerline{Departament de Matemàtiques} 
		\centerline{Universitat Autònoma de Barcelona (UAB)} 
		\centerline{08193 Bellaterra, Barcelona, Spain}
		\centerline{\texttt{clara.cufi@uab.cat}}
	} 
	
	\medskip
	\vspace{8pt}
	
	\centerline{Ernest Fontich}
	\medskip
	{\footnotesize
		\centerline{Departament de Matemàtiques i Informàtica} 
		\centerline{Universitat de Barcelona (UB)}
		\centerline{Centre de Recerca Matemàtica (CRM)}
		\centerline{Gran Via 585, 08007 Barcelona, Spain}
		\centerline{\texttt{fontich@ub.edu}}
	}
	
	\bigskip

	\begin{abstract}
We consider analytic maps and vector fields  defined in $\R^2 \times \T^d$, having a $d$-dimensional invariant torus $\MT$. The map (resp. vector field) restricted to $\MT$ defines a rotation of frequency $\omega$, and its derivative restricted to transversal directions to $\MT$ does not diagonalize. In this context, we give conditions on the coefficients of the nonlinear terms of the map (resp. vector field)  under which $\MT$ possesses stable and unstable invariant manifolds, and we show that such invariant manifolds are analyitic away from the invariant torus. We also provide effective algorithms to compute approximations of  parameterizations of the invariant manifolds, and present some applications of the results.
	\end{abstract}
	\vspace{0.1cm}
	
	\begin{footnotesize}
		\emph{2020 Mathematics Subject Classification:} Primary: 37D10; Secondary: 37C25. \\ \emph{Key words and phrases:} Parabolic torus, invariant manifold, parameterization method.
	\end{footnotesize}
	
	
	\section{Introduction}

The study of parabolic invariant manifolds is relevant, apart from the interest that presents itself as a mathematical problem, because this kind of manifolds appears naturally in many problems motivated by physics, chemistry and other sciences.

Parabolic manifolds have been used to prove the existence of oscillatory motions in some well-known problems of Celestial Mechanics as  the Sitnikov problem \cite{sit60, mos73} and the circular planar restricted three-body problem
\cite{gms16,gsms17,llibresimo80} using the transversal intersection of invariant manifolds of parabolic points and symbolic dynamics. 

The existence of oscillatory motions in all these instances is strongly related to  some invariant objects at infinity that are either fixed points, periodic orbits or invariant tori and to their stable and unstable manifolds. Although these invariant objects are parabolic in the sense that the linearization of the vector field on them has all the eigenvalues equal to zero, they do have stable and unstable invariant manifolds in a similar way as for hyperbolic invariant objects, that is, 
the sets of initial conditions of solutions that tend to the invariant object when $t\to \pm \infty$, for the stable/unstable manifolds, respectively.

Let $\T^d = (\R / \Z)^d$ be the real torus of dimension $d$ and $U \subset \R^2$. In this paper, we consider analytic maps $F: U \times \T^d \times \Lambda \to \R^2 \times \T^d$  of the form 
\begin{equation} \label{forma-general-maps}
	F(x, y, \theta, \lambda) = 
	\begin{pmatrix}
		x + c(\theta, \lambda) y + f_1(x, y, \theta, \lambda) \\
		y +  f_2(x, y, \theta, \lambda)  \\
		\theta + \omega + f_3(x, y, \theta, \lambda)
	\end{pmatrix},
\end{equation}
where $(x, y) \in U \subset \R^2$, $\theta \in \T^d$, $\omega \in \R^d$, $\lambda \in \Lambda \subset \R^m$, with $f_1(x, y, \theta, \lambda), f_2(x, y, \theta, \lambda) = O(\|(x, y)\|^2)$, and $f_3(x, y, \theta, \lambda) = O(\|(x, y)\|)$, 
and analytic vector fields 
$X: U \times \T^d \times \R \times \Lambda \to \R^2 \times \T^d$  of the form 
\begin{equation} \label{forma-general-edo}
	X(x, y, \theta, t, \lambda) =
	\begin{pmatrix}
		c(\theta, t, \lambda) y + g_1(x,y,\theta, t, \lambda) \\
		g_2(x,y,\theta, t, \lambda)  \\
		\omega + g_3(x,y,\theta, t, \lambda)
	\end{pmatrix},
\end{equation}
with $g_1(x, y, \theta, \lambda), g_2(x, y, \theta, \lambda) = O(\|(x, y)\|^2)$, and $g_3(x, y, \theta, \lambda) = O(\|(x, y)\|)$, depending quasiperiodically on the time variable $t$.

The set 
$$
\mathcal{T} = \{ (0,0,\theta) \in U \times \T^d  \}
$$ 
is an invariant torus of $F$ and  $X$, that is, for all $\lambda \in \Lambda$,  $F(\MT, \lambda) \subset \MT$, or in the case of vector fields, for any point $x \in \MT$, $X(x, \lambda)$ is tangent to $\MT$ at $x$. We say that $\MT$ is a \emph{parabolic torus with nilpotent part} because the top-left $2 \times 2$ box of the derivative of $F$ (resp. $X$) at  $(0, 0, \theta)$ does not diagonalize.

We study the existence and regularity of $(d+1)$-dimensional invariant manifolds of such maps and vector fields. Concretely, we give sufficient conditions on the coefficients of $F$ and $X$ that guarantee the existence of a stable and an unstable invariant manifold, and we prove that such invariant manifolds are analytic away form $\MT$. Moreover, we provide an algorithm to compute an approximation of a parameterization of the invariant manifolds up to any order. The results also provide  analytic dependence on parameters.

For the case of maps  we use a method similar to the one in \cite{BFM-20-w}, where the authors study  the existence of invariant manifolds of analytic maps and vector fields defined on $\R^n \times \T^d$, and where the first $n \times n$ box of the linear part of the maps is equal to the identity. There, applications to the study of the planar $(n+1)$-body problem are provided. Contrary to that paper, here the results for vector fields are presented as a direct study of  a functional equation in a suitable Banach space, while in \cite{BFM-20-w} the corresponding results are obtained from the results for maps.

In  \cite{zhang16} the authors deal with invariant curves of $C^\infty$ planar maps of the form corresponding to the first two components of \eqref{forma-general-maps}  and they obtain the existence of a local stable manifold  as the graph of some function $\varphi$ by solving a fixed point equation equivalent to the invariance of the graph of $\varphi$. This equation is solved by applying the Schauder fixed point theorem, and they obtain invariant manifolds of class $C^{[(k+1)/2]}$, where in our notation such $k$ appears in the reduced form of $F$ given in Section \ref{sec-reduced}. In this paper we generalize the results of \cite{zhang16} in several ways. First, we consider both maps and vector fields having a parabolic invariant torus, where the particular case of the maps restricted to the directions transversal to the torus have the form considered in \cite{zhang16}. Also, the maps and vector fields that we consider are analytic, and we provide the existence of analytic invariant manifolds for them, away from the invariant tori. 
We also provide a result of uniqueness for the stable manifold for maps.
In our approach we use the parameterization method, which allows us to state the existence theorems as \emph{a posteriori} results, and we provide explicit algorithms to compute an approximation of a parameterization of the invariant manifolds. Moreover, the use of this technique and the Banach fixed point theorem allows to prove the existence results in a more compact way than in \cite{zhang16}.

The results of \cite{CC-EF-20} also generalize in some ways the ones of \cite{zhang16}. There, planar maps of class $C^r$ having a nilpotent parabolic point are considered, and the existence of $C^r$ invariant curves asymptotic to the fixed point is provided. The present paper is indeed a natural generalization of  \cite{CC-EF-20} in the analytic case.

The paper is organized as follows. In Section \ref{sec-reduced} we introduce the \emph{reduced form} of the maps and vector fields we deal with and we present the parameterization method. Next we state the main results concerning the existence of invariant manifolds (Theorems \ref{teorema_analitic_tors} - \ref{teorema_posteriori_tors_edo}) and we present applications  to the study of a quasiperiodically forced oscillator and  the scattering of He atoms off Cu surfaces in Section \ref{sec-appl-tors}.  
In Section \ref{sec-algoritme-tors}, we provide an algorithm to compute an approximation of a parameterization of the invariant manifolds, both for maps and vector fields. The main results of that section are Theorems \ref{prop_simple_tors} and \ref{prop_tors_edo_sim}. The rest of the paper is devoted to prove the main results of existence. First, in Section \ref{sec-funcional} we introduce a functional equation equivalent to the invariance of a parameterized manifold which will be the object of our study. We also introduce suitable function spaces, some operators, and state their properties. Finally, the proofs of the main results are provided in Section \ref{sec-dems-tors}.
In the Appendix we give two postponed proofs and we deal with unstable manifolds. 

	\section{Main results and applications}
	
In this section we state the main results of the paper. 
To simplify the statements we write them for maps and vector fields in their reduced form, introduced below. This requires some preliminary notation. After the statements of the results we provide some applications.

Let $V \subset \R^n$ be an open set. 
We say that a function $h: V \times \R \to \R^\ell$, $h=h(x, t)$, depends \emph{quasiperiodically} on $t$  if there exists a vector $\nu = (\nu_1 , \, \dots , \, \nu_{d'}) \in \R^{d'}$ and a function $\check h : U \times \T^{d'} \to \R^\ell$, called the hull function of $h$,  such that 
$$
h(x, t) = \check h(x, \nu t).
$$
We call $\nu$ the vector of \emph{time frequencies} of $h$. If $d'=1$ then $h$ is a periodic function of $t$.

Given a map  $h : V\times \T^d  \to \R^\ell$, we define the \emph{average} of $h$ with respect to $\theta \in \T^d$ as
$$
\bar{h}(x) = \frac{1}{\text{vol}(\T^d)} \int_{\T^d} h(x, \theta) \, d\theta,
$$
and the \emph{oscillatory} part of $h$ as
$
\tilde{h}(x,\theta) = h( x,\theta) - \bar{h}(x)$. Sometimes, if a function $h$ does not depend on $\theta$, we will still write $\bar h$ to emphasize this fact. If $h$ also depends quasiperiodically on time, we write
$$
\bar{h}(x) = \frac{1}{\text{vol}(\T^d\times \T^{d'})} \int_{\T^d\times \T^{d'}} \check  h(x, \theta,\theta') \, d\theta d\theta'.
$$
If $t$ denotes the time variable, then given two functions $g(x,t)$ and $h(x,t)$ the composition $f = h \circ g$ will mean
$$
f(x, t) = h(g(x, t), t).
$$

We will deal with functions depending on a paramater $\lambda \in \Lambda\subset \R^m$. The previous definitions and notation extend naturally to such functions.

\subsection{Reduced form of the maps and vector fields} \label{sec-reduced}

Along this paper we consider analytic maps of the form \eqref{forma-general-maps} and analytic vector fields of the form \eqref{forma-general-edo}. Performing the analytic change of variables given by $\tilde x = x$,   $\tilde{y} = y + \frac{1}{c(\theta, \lambda)} \, f_1(x,  y, \theta, \lambda)$, $\tilde \theta = \theta $, the nonlinear terms of the first components of \eqref{forma-general-maps} and \eqref{forma-general-edo} are removed, and there remain only the linear terms. Performing also the change $y \mapsto -y$ if necessary, \eqref{forma-general-maps} and \eqref{forma-general-edo} can be written respectively as
\begin{equation} \label{forma_normal_tor} 
	F(x, y, \theta, \lambda) = 
	\begin{pmatrix}
		x + c(\theta, \lambda) y \\
		y + a_k(\theta, \lambda)x^k + A(x, y, \theta, \lambda)  \\
		\theta + \omega +d_p(\theta, \lambda)x^p + B(x, y, \theta, \lambda)
	\end{pmatrix}
\end{equation}	
and
\begin{equation} \label{fnormal_tors_edo1}
	X(x, y, \theta, t, \lambda) =
	\begin{pmatrix}
		c(\theta, t, \lambda) y \\
		a_k(\theta, t, \lambda)x^k + A(x, y, \theta, t, \lambda)  \\
		\omega + d_p(\theta, t, \lambda)x^p + B(x, y, \theta, t, \lambda)
	\end{pmatrix},
\end{equation}
with $(x,y, \theta, \lambda) \in U \times \T^d  \times \Lambda$ and $\bar c >0$, for some $k \geq 2$, $p \geq 1$. We assume that   
\begin{align} \begin{split} \label{condicions_fnormal}
		A(x, y, \theta,t, \lambda) = y \,  O(\|(x, y)\|^{k-1}) +  O(\|(x, y)\|^{k+1}),  \\
		B(x, y, \theta, t, \lambda) = y \,  O(\|(x, y)\|^{p-1}) + O(\|(x, y)\|^{p+1}),
\end{split} \end{align}
without the time dependence in the case of maps. We also assume that the vector field $X$ given in \eqref{fnormal_tors_edo1} depends quasiperiodically on $t$, being $\nu \in \R^{d'}$ the vector of time frequencies.

From now on, we will refer to \eqref{forma_normal_tor} and \eqref{fnormal_tors_edo1} as the \emph{reduced form} of \eqref{forma-general-maps} and \eqref{forma-general-edo}, respectively, and we will also denote them  by $F$ and $X$. When using  those reduced forms, we will refer not only to the form of the map and the vector field  \eqref{forma_normal_tor} and \eqref{fnormal_tors_edo1} but also to the conditions \eqref{condicions_fnormal}.

For the sake of simplicity  we will only consider the cases of $F$ and $X$ with $\bar a_k(\lambda), \, \bar d_p( \lambda) \neq 0$, which include the generic ones. 
Other more degenerate cases may be treated with the same techniques.
In Theorem \ref{teorema_analitic_helicoure} we consider a case with $\bar a_k(\lambda) = 0$ to be able to deal with an application to the scattering of He atoms off Cu surfaces.

Along the paper we will sometimes omit the dependence of the functions we work with on the parameter $\lambda$ when there is no danger of confusion. Concretely, we present the statements, setting and function spaces with detail but we skip the dependence on parameters in the lemmas and proofs.


To prove the results we use  the parameterization method for invariant manifolds (see
\cite{cfdlll03I}, \cite{cfdlll05}, \cite{HCFLM16}). It consists in looking  for the invariant manifolds of $F$ or $X$ as images of parameterizations together with a representation of the dynamics of $F$ or $X$ restricted to the invariant manifolds.

In the maps setting, we look for a pair of functions, $K(u, \theta, \lambda) : [0, \rho) \times \T^d \times \Lambda \to  \R^2 \times \T^d$ and $R(u, \theta, \lambda) : [0, \rho) \times \T^d \times \Lambda \to  \R \times \T^d$ satisfying the invariance equation
\begin{equation} \label{eq-param-met-def}
	F ( K(u, \theta, \lambda), \lambda) = K (R(u, \theta, \lambda), \lambda).
\end{equation}
This equation establishes that  the range of $K$ is contained in the domain of $F$. It is a functional equation that has to be adapted to the setting of the problem at hand. It follows immediately from \eqref{eq-param-met-def} that the range of $K$ is invariant. Actually, $K$ is a (semi)conjugation of the map $F$ restricted to the range of $K$ to $R$. Then, one has to solve equation \eqref{eq-param-met-def} in a suitable space of functions. Usually it is convenient to have good approximations of $K$ and $R$
and look for a small correction of $K$, in some sense, while maintaining $R$ fixed.

Assuming differentiability and taking derivatives in \eqref{eq-param-met-def} we get 
$DF \circ K \cdot DK  =DK \circ R \cdot DR$ which says that the range of $DK$ has to be invariant by $DF$. Therefore, in our setting we look for $K=( K^x, K^y)$ and $R$ such that $K(0, \theta, \lambda) = (0,0, \theta)$, $R(0, \theta, \lambda) = (0, \theta + \omega)$, and $\partial_u K^y(u,\theta, \lambda) / \partial_u K^x(u,\theta, \lambda) \to 0$ as $u \to 0$.

The existence of $K$ provides the existence of an invariant manifold, stable or unstable depending on whether $0$ is an attractor  or a repellor
for $R$.

In the vector fields setting, to find an invariant manifold of $X$ following the parameterization method, we look for a map $K$ and a vector field $Y$ satisfying 
\begin{equation} \label{eq-camps-conj-1}
	X \circ K = DK \cdot Y.
\end{equation}
This equation expresses that on the range of $K$, the vector field $X$ is tangent to the range of $K$, and therefore,  the image of $K$ is invariant under the flow of $X$. Moreover, the vector field $Y$ is a representation of $X$ restricted to the image of $K$.

When $X$ is nonautonomous one needs a time-dependent version of equation \eqref{eq-camps-conj-1}. Adding  the equation $\dot t =1$ to the system $\dot z = X(z,t)$, $z \in U \times \T^d$, and applying \eqref{eq-camps-conj-1} to the new vector field
we arrive at the equation
\begin{equation} \label{eq-conj-tors-edo}
	X(K(u, \theta, t, \lambda), t, \lambda)- \partial_{(u, \theta)} K (u, \theta, t, \lambda) \cdot Y (u, \theta, t, \lambda) - \partial_t K(u, \theta, t, \lambda) = 0
\end{equation}
for $K$ and $Y$ also depending on $t$. Concretely,  look for a map $K(u, \theta, t, \lambda)$ and a vector field $Y(u,  \theta, t,\lambda)$  satisfying \eqref{eq-conj-tors-edo}.

Then equation \eqref{eq-conj-tors-edo} expresses that on the range of $K$, the vector field $(X, 1)$ is tangent to the range of $K$, and therefore,  the image of $K$ is invariant under the flow of $(X,1)$.  Moreover, we look for $K$ and $Y$ satisfying  $K(0,\theta, t, \lambda) = (0, 0, \theta) \in \R^2 \times \T^d$, $Y(0,\theta, t, \lambda) = (0, \omega) \in \R \times \T^d$ and $\partial_u K^y / \partial_u K^x \to 0$ as $u \to 0$.

It is known that in the parabolic case, in general, there is a loss of regularity of the invariant manifolds around the invariant torus with respect to the regularity of the map or the vector field (\cite{BFM20a, BFM20b, fontich99}). Then we cannot assume \textit{a priori} a Taylor expansion with respect to $u$ of high degree of the manifolds at $u=0$. However, we can obtain formal approximations, $\MK_n$, $\mathcal{R}_n$ and $\MY_n$ of $K$, $R$ and $Y$, satisfying the equations \eqref{eq-param-met-def} and \eqref{eq-conj-tors-edo} up to  any order. In our results we will show that these expressions are indeed approximations of true invariant manifolds, whose existence is rigorously established.  

\subsection{Main results} \label{mainresults}


In this section we state the results on existence of analytic stable  invariant manifolds for maps and vector fields of the form \eqref{forma_normal_tor} and \eqref{fnormal_tors_edo1}, respectively, asymptotic to the invariant torus $\mathcal{T}$. 
For both cases we also state an
\emph{a posteriori} result, which  provides the existence of a stable invariant manifold assuming it has been previously approximated but the statement is independent of the way such an approximation has been obtained. 
In the Appendix at the end of the paper we show that completely analogous results for the unstable manifolds also hold true.

\begin{theorem}[Invariant manifolds of maps] \label{teorema_analitic_tors}
	Let  $F: U \times \T^d \times \Lambda \to \R^2 \times \T^d$ be an analytic map of the form \eqref{forma_normal_tor}. Assume that $2p> k-1$,  $\bar a_k (\lambda) > 0$  for $\lambda \in \Lambda$, and that  $\omega$ is Diophantine.
	Then, there exists $\rho > 0$ and a $C^1$ map  $K: [0, \, \rho ) \times \T^d \times \Lambda \to \R^2 \times \T^d$, analytic in $(0,\rho) \times \T^d \times \Lambda$,  of the form 
	\begin{equation*} \label{propietats_k_analitic_tors}
		K(u, \theta, \lambda) = (u^2, \,  \ol{ K }_{k+1}^y (\lambda) u^{k+1},\,  \theta + \bar K_{2p-k+1}^\theta(\lambda)  u^{2p-k+1} ) + (O(u^3), O(u^{k+2}), O(u^{2p-k+2})),
	\end{equation*}
	and  a polynomial map $R$  of the form 
	$$
	R(u, \theta, \lambda) = (
	u + \bar R^x_k (\lambda) u^k + \ol R^x_{2k-1}(\lambda) u^{2k-1}, \, 
	\theta + \omega ),
	$$ 
	with $\bar R_k^x (\lambda) <0$, such that 
	$$
	F(K(u, \theta, \lambda), \lambda) = K(R(u, \theta, \lambda), \lambda), \qquad (u, \theta, \lambda) \in [0, \, \rho) \times \T^d \times \Lambda.
	$$
	Moreover, we have
	\begin{small}
		$$
		\ol K_{k+1}^y(\lambda) = -  \sqrt{\frac{2 \, \bar a_k(\lambda)}{\bar c(\lambda)\, (k+1)}}, \quad \ol K_{2p-k+1}^\theta (\lambda) = - \frac{\bar d_p(\lambda)}{2p-k+1} \sqrt{\frac{2(k+1)}{\bar c (\lambda) \, \bar a_k}}, \quad \ol R_k^x (\lambda) = - \sqrt{\frac{\bar c(\lambda) \, \bar a_k(\lambda)}{2(k+1)}}.
		$$
	\end{small}
\end{theorem}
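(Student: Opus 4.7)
The plan is to apply the parameterization method along the scheme sketched in the excerpt: reduce the invariance equation \eqref{eq-param-met-def} to a fixed point problem for a small correction of a polynomial approximation, then solve it by Banach's contraction principle on a space of analytic functions on a complex sector attached to $\MT$. First I would invoke the algorithm of Theorem \ref{prop_simple_tors} to produce polynomials $\MK_n(u,\theta,\lambda)$ and $\MR_n(u,\theta,\lambda)$ of the advertised form such that
$$
\ME_n := F\circ \MK_n - \MK_n \circ \MR_n = O(u^{n+1})
$$
for a large $n$ fixed later, with analytic dependence on $\lambda$. Matching monomials of lowest degree in $u$ fixes the leading coefficients: the order-$u^{k+1}$ term in the first component gives $2\ol R^x_k = \ol c\, \ol K^y_{k+1}$, the order-$u^{2k}$ term in the second component gives $(k+1)\ol K^y_{k+1}\ol R^x_k = -\ol a_k$, and an analogous balance in the third component (which uses the inequality $2p>k-1$ to ensure the relevant monomial is below the $\theta$-oscillatory corrections) pins down $\ol K^\theta_{2p-k+1}$. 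The sign conventions $\ol a_k>0$, $\ol c>0$, $\ol R^x_k<0$ then force the explicit values in the statement. At each higher order the coefficients solve a cohomological equation with shift $\theta \mapsto \theta+\omega$, and the Diophantine hypothesis on $\omega$ is used to invert it and control small divisors.

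Writing $K = \MK_n + \eta$ and keeping $R=\MR_n$ fixed, the invariance equation rearranges to
$$
\ML\eta := \eta\circ R - DF(\MK_n)\cdot \eta = -\ME_n - \MN(\eta),
$$
where $\MN(\eta)$ gathers the terms of order $\geq 2$ in $\eta$. I would work in a complex sector $S_{\rho,\alpha} = \{u\in\C : 0<|u|<\rho,\ |\arg u|<\alpha\}$ chosen so that $R(S_{\rho,\alpha})\subset S_{\rho,\alpha}$ and the iterates satisfy the parabolic rate $|R^j(u)|\asymp j^{-1/(k-1)}$ (see Section \ref{sec-funcional}); the working Banach space $\MX$ is that of bounded analytic maps on $S_{\rho,\alpha}\times \T^d\times \Lambda$ equipped with a weighted norm $\|\eta\|_\beta := \sup|u|^{-\beta}\|\eta(u,\theta,\lambda)\|$, with $\beta$ calibrated against $n,k,p$. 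The formal expansion of $\MK_n$ gives $\ME_n \in \MX$ with arbitrarily small norm when $n$ is large and $\rho$ is small.

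The main obstacle, and the real content of the argument, is inverting $\ML$ on $\MX$. Because the linearization on $\MT$ is nilpotent there are no exponential rates to exploit; instead I would construct $\ML^{-1}$ by iterating along the attracting orbit of $R$, writing
$$
\ML^{-1}g(u,\theta,\lambda) = -\sum_{j\geq 0}\Bigl(\prod_{i=0}^{j-1}DF(\MK_n(R^i(u),\theta+i\omega,\lambda))\Bigr)g(R^j(u),\theta+j\omega,\lambda),
$$
and proving that the polynomial growth of the product of $DF$-factors along orbits is dominated by the decay of $g(R^j(u),\cdot)$ inherited from $g=O(u^{n+1})$ once $n$ is chosen large enough in terms of $k$, $p$ and the weight $\beta$. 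Balancing these exponents is precisely the technical heart of the proof and is what forces the hypothesis $2p>k-1$ and the requirement $\ol a_k>0$. Once $\ML^{-1}$ is bounded, the fixed-point equation $\eta = -\ML^{-1}(\ME_n+\MN(\eta))$ is a contraction on a small ball of $\MX$, and Banach's theorem produces a unique analytic $\eta$ on $S_{\rho,\alpha}\times\T^d\times\Lambda$. Then $K=\MK_n+\eta$ enjoys the stated leading expansion, is analytic in the open sector (hence on $(0,\rho)\times\T^d\times\Lambda$ by restricting to the real axis), and the high-order vanishing of $\eta$ at $u=0$ yields the $C^1$ extension to $[0,\rho)\times\T^d\times\Lambda$; analytic dependence on $\lambda$ follows because all operators above depend analytically on $\lambda$.
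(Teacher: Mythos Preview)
Your overall architecture---produce a polynomial pair $(\MK_n,\MR_n)$ via Theorem~\ref{prop_simple_tors}, then find an analytic correction $\eta$ by a contraction argument on a complex sector---is exactly the paper's scheme. However, the decomposition you set up differs from the one the paper actually uses, and your formula for $\ML^{-1}$ is not correct as written.

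The paper does \emph{not} take the full linearization $\ML\eta=\eta\circ R-DF(\MK_n)\eta$ as the linear part. Instead it uses the much simpler, component-wise decoupled operator $\MS_{n,R}\eta=\eta\circ R-\eta$ (Definition~\ref{def_S_tor}), whose right inverse is the explicit telescoping sum $\MS_{n,R}^{-1}g=-\sum_{j\ge 0}g\circ R^j$ with no matrix products at all (Lemma~\ref{invers_S_tor}). All the coupling coming from $DF$---in particular the $O(1)$ term $c(\theta)\eta^y$ in the $x$-component and the $k\,a_k(\theta)(\MK_n^x)^{k-1}\eta^x$ term in the $y$-component---is pushed into the operator $\MN_{n,F}$ (Definition~\ref{def_N_analitic_tor}), which is therefore \emph{not} genuinely quadratic in $\eta$: its Lipschitz constant is $O(1)$, not small. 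The contraction of $\MT_{n,F}=(\MS_{n,R}^\times)^{-1}\circ\MN_{n,F}$ is obtained because $\|\MS_{n,R}^{-1}\|\le\rho^{k-1}+\tfrac{k-1}{\mu n}$ can be made as small as one wishes by taking $n$ large (Lemma~\ref{lema_contraccio_tors}). This is precisely why a high-order formal approximation is needed, and it completely avoids estimating products of near-identity matrices with a nilpotent block. A second point you gloss over but which is essential in the paper: the weight is not a single exponent $\beta$ but a \emph{component-wise} choice, namely the space $\MW_n^\times=\MW_n\times\MW_{n+k-1}\times\MW_{n+2p-k-1}^d$; without these staggered weights the Lipschitz bounds of Lemma~\ref{lema_N_tors} do not close.

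Regarding your $\ML^{-1}$: iterating $\eta\circ R-DF(\MK_n)\eta=g$ forward along the contracting orbit of $R$ and using $\eta(R^j(u))\to 0$ yields $\eta(u)=-\sum_{j\ge 0}\bigl(\prod_{i=0}^{j}DF(\MK_n(R^i(u)))\bigr)^{-1}g(R^j(u))$, i.e.\ products of \emph{inverses} of $DF$, not of $DF$ itself; your displayed formula does not satisfy $\ML(\ML^{-1}g)=g$. Since $DF\approx I+N$ with $N$ nilpotent, the corrected products still grow only polynomially in $j$ and can be beaten by $|R^j(u)|^{\beta}\sim j^{-\beta/(k-1)}$ for $\beta$ large, so your route is salvageable, but it is more laborious than the paper's trick of replacing $DF$ by $I$ in the linear part. (Minor: the averaged $y$-equation at order $u^{2k}$ reads $(k+1)\ol K_{k+1}^y\ol R_k^x=\ol a_k$, with a plus sign.)
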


\begin{remark} 
	The statement of Theorem \ref{teorema_analitic_tors} provides a local stable manifold parameterized by  $K: [0, \, \rho ) \times \T^d \times \Lambda \to \R^2 \times \T^d$ with $\rho$ small. The proof does not give an explicit estimate for the value of $\rho$, however, one  can extend the domain of $K$ by using the formula 
	$$
	K(u, \theta,  \lambda) = F^{-j} K(R^j(u, \theta, \lambda)), \qquad j \geq 1,
	$$
	while the iterates of the inverse map $F^{-1}$ exist. 
\end{remark}

\begin{theorem}[\emph{A posteriori} result for maps] \label{teorema_posteriori_tors}
	Let $F: U \times \T^d \times \Lambda \to \R^2 \times \T^d$ be an analytic map of the form \eqref{forma_normal_tor} with $ 2p>k-1$, and let  $\wh K:(-\rho, \rho)  \times \T^d \times \Lambda \to \R^2 \times \T^d $ and $\wh R = (-\rho, \rho)  \times \T^d \times \Lambda \to \R \times \T^d$
	be analytic maps of the form
	$$
	\wh K(u, \theta, \lambda) = (u^2, \,  \ol K_{k+1}^y (\lambda) u^{k+1},\,  \theta + \ol K_{2p-k+1}^\theta(\lambda) u^{2p-k+1} ) + (O(u^3), O(u^{k+2}), O(u^{2p-k+2})),
	$$
	and 
	$$\wh R(u, \theta, \lambda) =  
	(	u + \ol R_k^x(\lambda) u^k + O(u^{k+1}) , \, 
	\theta + \omega ),		
	$$
	with  $\ol R_k^x(\lambda)<0$, satisfying
	\begin{equation*} 
		F ( \wh K (u, \theta , \lambda), \lambda) -  \wh K (\wh R(u, \theta, \lambda), \lambda) =  (O(u^{n+k}), O(u^{n+2k-1}), \, O(u^{n+2p-1})),
	\end{equation*}
	for some $n\ge 2$.
	
	Then, there exists a $C^1$ map $K:[0,\rho) \times \T^d \times \Lambda \to \R^2 \times \T^d$, analytic in $(0,\rho) \times \T^d \times \Lambda$, and an analytic map $R:(-\rho,\rho) \times \T^d \times \Lambda \to \R \times \T^d$  such that 
	$$
	F ( K (u , \theta, \lambda), \lambda) =   K (  R(u, \theta , \lambda), \lambda) , \qquad  (u, \theta ) \in [0,\rho) \times \T^d,
	$$
	and 
	$$
	K (u , \theta, \lambda) -  \wh K(u, \theta, \lambda ) =  (O(u^{n+1}), O(u^{n+k}), \, O(u^{n+2p-k})) ,
	$$
	$$ 
	R(u, \theta, \lambda) -\wh R(u, \theta, \lambda) = \left\{ \begin{array}{ll}
		(  O(u^{2k-1}) ,  0 ) & \text{ if }\; n\le k, \\
		(0, 0)  & \text{ if } \; n>k.
	\end{array} \right. 
	$$
\end{theorem}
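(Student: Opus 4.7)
The plan is to solve the invariance equation by a fixed-point argument around the approximate solution $(\widehat K,\widehat R)$, in a Banach space of analytic functions with weights encoding the prescribed orders of vanishing at $u=0$. Set $\Delta := K - \widehat K$ and $\sigma := R - \widehat R$, with $\Delta = (\Delta^x,\Delta^y,\Delta^\theta)$ sought analytic on a complex neighbourhood of $(0,\rho)\times\T^d\times\Lambda$ and vanishing at $u=0$ to orders $n+1$, $n+k$, $n+2p-k$ respectively, and $\sigma = (\sigma^x,0)$ with $\sigma^x = O(u^{2k-1})$ (identically zero when $n>k$). Plugging $K = \widehat K + \Delta$ and $R = \widehat R + \sigma$ into $F\circ K = K\circ R$ and writing $E := F\circ \widehat K - \widehat K\circ \widehat R$, a Taylor expansion of $F$ at $\widehat K$ and of $\widehat K$ at $\widehat R$ yields
\begin{equation*}
\ML(\Delta,\sigma) := DF(\widehat K)\cdot\Delta - \Delta\circ \widehat R - D\widehat K(\widehat R)\cdot\sigma \;=\; -E + \MN(\Delta,\sigma),
\end{equation*}
where $\MN$ collects all nonlinear remainders in $(\Delta,\sigma)$ and, by hypothesis, $E$ has components of orders $(u^{n+k},u^{n+2k-1},u^{n+2p-1})$.

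The next step is to show that $\ML$ admits a bounded right-inverse on the weighted space. At the torus, $DF$ is the identity plus a nilpotent block encoding $c(\theta,\lambda)$ and the couplings coming from the coefficients $a_k,d_p$, so $\ML$ has the structure $\mathrm{Id}-C_{\widehat R}$ (with $C_{\widehat R}$ the composition with $\widehat R$) plus lower-triangular cross-couplings among the components of $\Delta$ and a linear term in $\sigma$. Because $\widehat R$ contracts in $u$ (thanks to $\ol R_k^x<0$) and acts as a translation on $\T^d$, the restriction of $\mathrm{Id}-C_{\widehat R}$ to functions vanishing at $u=0$ has a bounded right-inverse that gains exactly $k-1$ orders of vanishing, precisely matching the shift between the orders of $E$ and those of $\Delta$ in the statement. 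Inverting $\ML$ componentwise in the cascade $\Delta^\theta\to\Delta^x\to\Delta^y$, feeding each computed component back into the right-hand side at improved order, produces the desired inverse; the free scalar mode $\sigma^x$ is reserved to absorb the single resonant obstruction at order $u^{2k-1}$ that $\mathrm{Id}-C_{\widehat R}$ alone cannot remove. This is why $\sigma^\theta\equiv 0$ and why $\sigma^x$ is unnecessary precisely when the initial defect already vanishes past $u^{2k-1}$, i.e.\ when $n>k$.

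With $\ML^{-1}$ at hand, the invariance equation is recast as the fixed-point problem $(\Delta,\sigma) = \ML^{-1}(-E + \MN(\Delta,\sigma))$. Since $E$ vanishes to the hypothesised orders, $\ML^{-1}(-E)$ lies in a small ball of the Banach space, and $\ML^{-1}\circ \MN$ is Lipschitz-contractive there once $\rho$ is taken small enough, so Banach's theorem delivers a unique $(\Delta,\sigma)$. The resulting $K$ is analytic on $(0,\rho)\times\T^d\times\Lambda$ and of class $C^1$ up to $u=0$ by the prescribed vanishing orders, and the quantitative bounds on $(\Delta,\sigma)$ translate directly into the remainder estimates for $K-\widehat K$ and $R-\widehat R$ stated in the theorem. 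The principal technical obstacle is the sharp analysis of $\ML^{-1}$: quantifying the order gain of $\mathrm{Id}-C_{\widehat R}$ on the weighted analytic spaces, disentangling the nilpotent cross-couplings that force the different vanishing orders of the three components of $\Delta$, and isolating the unique resonant mode to be absorbed into $\sigma^x$, all while maintaining analytic bounds uniform in $\lambda\in\Lambda$ and on a complex strip around $\T^d$.
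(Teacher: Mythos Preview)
Your outline has the right flavor (weighted analytic spaces, linearize, invert, contract), but there is a genuine gap at the heart of the argument: the contraction does \emph{not} follow just from taking $\rho$ small when $n$ is as small as the hypothesis allows. In the paper's setup the diagonal part of your $\ML$ is $f\mapsto f\circ \widehat R - f$, whose right inverse on $\MW_{m+k-1}\to\MW_m$ has norm $\rho^{k-1}+\tfrac{1}{\mu}\tfrac{k-1}{m}$ (Lemma~\ref{invers_S_tor}); as $\rho\to 0$ this tends to $\tfrac{k-1}{\mu m}$, not to $0$. On the other hand the leading cross-couplings in $DF(\widehat K)$ --- $c(\theta)\Delta^y$ into the $x$-equation, $k a_k(\theta)(\MK^x)^{k-1}\Delta^x$ into the $y$-equation, $p d_p(\theta)(\MK^x)^{p-1}\Delta^x$ into the $\theta$-equation --- have Lipschitz constants bounded below by $\sup|c|$, $k\sup|a_k|$, $p\sup|d_p|$ independently of $\rho$ (Lemma~\ref{lema_N_tors}). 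Hence $\Lip(\MS^{-1}\circ\MN)$ is bounded below by a positive constant times $1/n$, and for $n=2$ (or any $n$ below the threshold $m_0$ of Lemma~\ref{lema_contraccio_tors}) no choice of $\rho$ makes it a contraction.

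Your proposed remedy --- move those couplings into $\ML$ and invert by a cascade $\Delta^\theta\to\Delta^x\to\Delta^y$ --- does not work either, because the system is \emph{not} triangular in any ordering: $\Delta^y$ feeds into the $x$-equation through $c(\theta)\Delta^y$, while $\Delta^x$ feeds into the $y$- and $\theta$-equations through $a_k$ and $d_p$. The paper resolves this not analytically but algebraically: it first \emph{improves the approximation} $\widehat K,\widehat R$ by the iterative cohomological procedure of Theorem~\ref{prop_simple_tors} (equations \eqref{SDequations_induccio_sim}--\eqref{sist_lineal_tors_sim}) up to an order $n^*\ge n_0$ at which the contraction does hold, and only then runs the fixed-point argument with $R=\mathcal R^*$ \emph{fixed} (so no unknown $\sigma$ in the functional equation). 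The single ``resonant obstruction'' at order $2k-1$ is absorbed during that formal step by the choice of $\ol R_{2k-1}^x$, not as a free parameter in the Banach iteration. Your write-up is missing this formal-improvement stage, and without it the contraction claim fails for small $n$.
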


We also have a uniqueness result for the invariant manifolds obtained in the previous theorems.

\begin{theorem} \label{thm:unicitat}
Under 	
the hypotheses of Theorems \ref{teorema_analitic_tors} and \ref{teorema_posteriori_tors} the stable manifold is unique.
\end{theorem}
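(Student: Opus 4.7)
The plan is to combine formal uniqueness at the level of Taylor expansions at $u=0$ with the contraction argument that underlies Theorem \ref{teorema_posteriori_tors}. Suppose $K_1, R_1$ and $K_2, R_2$ are two pairs provided by Theorem \ref{teorema_analitic_tors}, each parameterizing a stable manifold $W_i$ of the form stipulated there. First I would establish \emph{formal uniqueness}: matching coefficients of $u^j$ in the invariance equation $F(K(u,\theta,\lambda),\lambda) = K(R(u,\theta,\lambda),\lambda)$ recursively determines every Taylor coefficient of $K^x$, $K^y$, $K^\theta$ and $R^x$ at $u=0$ from the leading terms prescribed in the theorem. At each step one arrives at a linear equation of the form $\varphi(\theta+\omega) - \varphi(\theta) = h(\theta)$ on the torus, which the Diophantine property of $\omega$ resolves for the oscillatory part of $\varphi$; the mean of $\varphi$ is in turn fixed by the next-order equation (or absorbed into the normal-form choice that $R$ is the polynomial of the shape stated in Theorem \ref{teorema_analitic_tors}). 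This is exactly the recursion driving the algorithm of Section \ref{sec-algoritme-tors}. Consequently, $K_1$ and $K_2$ share the same formal Taylor series at $u=0$, and $R_1 = R_2 =: R$.

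Second, I would promote formal equality to true equality by applying Theorem \ref{teorema_posteriori_tors} at arbitrarily large truncation order. Let $\MK_n$ be the common Taylor polynomial of order $n$ of $K_1$ and $K_2$. By formal uniqueness,
$$
K_i - \MK_n = \bigl(O(u^{n+1}),\, O(u^{n+1}),\, O(u^{n+1})\bigr), \qquad i = 1,2,
$$
and $\MK_n$ itself satisfies the invariance equation up to the residual size demanded by the hypotheses of Theorem \ref{teorema_posteriori_tors} for every $n \geq 2$. The proof of the \emph{a posteriori} theorem is a Banach fixed-point argument on a space of corrections $\Delta = K - \MK_n$ bounded by $u^{n+1}$-type weights in each component; it produces the solution as the unique fixed point of a contraction in a small ball around $\MK_n$. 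Taking $n$ sufficiently large, both $K_1 - \MK_n$ and $K_2 - \MK_n$ lie in this ball, so uniqueness of the fixed point forces $K_1 = K_2$ on the common domain, and hence $W_1 = W_2$.

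The main obstacle is the reparameterization ambiguity: if $(K,R)$ solves the invariance equation and $\phi$ is any analytic change of variables fixing $\{u=0\}$, then $(K\circ\phi,\, \phi^{-1}\circ R\circ\phi)$ is also a solution. The statement of Theorem \ref{teorema_analitic_tors} rules this out by restricting $R$ to be a polynomial of the specific form $(u + \bar R_k^x u^k + \bar R_{2k-1}^x u^{2k-1},\, \theta + \omega)$; together with the prescribed leading shape of $K$, this normalization rigidifies \emph{both} $K$ and $R$ and is what turns the formal analysis above into a genuine uniqueness statement for the manifold rather than only for the image. A point worth double-checking is that any analytic stable manifold of the type given by Theorem \ref{teorema_analitic_tors} indeed admits a parameterization in this normal form, which can be handled by absorbing any competing reparameterization of a candidate $K$ into the comparison of $K_1$ with $K_2$ before running Steps~1 and~2.
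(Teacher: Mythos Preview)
Your approach is quite different from the paper's and has a real gap. The paper argues dynamically: it performs a sequence of near-identity averaging changes of the form $(x,y,\theta)\mapsto(x+\phi(\theta)x^\ell y^m,y,\theta)$ (and the analogous ones in the $y$ and $\theta$ slots) to remove the $\theta$-dependence from the $x$- and $y$-components of $F$ up to order $k$, then applies the planar normal-form changes of \cite{fontich99} to the resulting $\theta$-independent $2\times 2$ block. This reduces the question to the uniqueness of the parabolic stable curve for a two-dimensional map of the type treated in \cite{fontich99}, where uniqueness is proved from explicit bounds on iterates---i.e.\ from the dynamical characterization of the stable set, not from a comparison of parameterizations.

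Your argument, by contrast, only compares two pairs $(K_i,R_i)$ \emph{already assumed} to be of the exact shape produced by Theorem~\ref{teorema_analitic_tors}. Even if perfectly executed, this would show that the construction there is internally consistent, not that every orbit converging to $\mathcal{T}$ lies on the constructed manifold---which is the content of ``the stable manifold is unique''. Moreover, your Step~1 is not correct as stated: the recursion in Theorem~\ref{prop_simple_tors} is \emph{not} uniquely solvable at the step $n=k$ (the matrix in \eqref{sist_lineal_tors_sim} becomes singular and the paper explicitly notes that $\ol K_{k+1}^x,\ol K_{2k}^y,\ol K_{2p}^\theta$ are not uniquely determined). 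Fixing $R$ in the polynomial normal form $(u+\ol R_k^x u^k+\ol R_{2k-1}^x u^{2k-1},\theta+\omega)$ does not kill this freedom, since the centralizer of such an $R$ is a nontrivial one-parameter group. Hence two solutions $K_1,K_2$ need not share the same jet at every order, and in Step~2 you cannot place both of them in the same contraction ball around a common $\MK_n$ without first proving they have the same asymptotic expansion---which is exactly what fails. The paper sidesteps all of this by never comparing parameterizations at all.
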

	The proof is deferred to the Appendix.
	\vspace{3pt}

	\begin{theorem}[Invariant manifolds of vector fields] \label{teorema_analitic_tors_edo}
		Let $X$ be an analytic vector field of the form  \eqref{fnormal_tors_edo1} and let $\nu \in \R^{d'}$ be the time frequencies of $X$. Assume that $2p> k-1$.  Assume also  that $(\omega, \nu)$ is Diophantine and that  $\bar{a}_k(\lambda) > 0$ for $\lambda \in \Lambda$.
		
		Then, there exists $\rho > 0$ and  a $C^1$ map  $K: [0, \, \rho ) \times \T^d \times \R \times \Lambda \to \R^2 \times \T^d$, analytic in $(0,\rho) \times \T^d \times \R \times \Lambda$,  of the form 
		\begin{equation*}
			K(u, \theta, t, \lambda) = (u^2, \,  \ol K_{k+1}^y(\lambda) u^{k+1},\,  \theta + \ol K_{2p-k+1}^\theta(\lambda) u^{2p-k+1} ) + (O(u^3), O(u^{k+2}), O(u^{2p-k+2})),
		\end{equation*}
		depending quasiperiodically on $t$ with the same frequencies as $X$,
		and a polynomial vector field $Y$  of the form 
		$$
		Y(u, \theta, t, \lambda) = Y(u, \lambda)= 
		(	\ol Y^x_k(\lambda) u^k + \ol Y^x_{2k-1}(\lambda) u^{2k-1}, \, 
			\omega ),
		$$
		with $\ol Y_k^x(\lambda) <0$, such that 
		\begin{align*}
			X(K(u, \theta, t, \lambda), t, \lambda) - \partial_{(u, \theta)} K(u, \theta, t, \lambda)  \cdot Y(u, \theta, t, \lambda) & - \partial_t K(u, \theta, t, \lambda) = 0,\\
			& (u, \theta, t, \lambda) \in [0, \, \rho) \times \T^d \times \R \times \Lambda.
		\end{align*}
	Moreover, we have
	\begin{small}
		$$
		\ol K_{k+1}^y(\lambda) = -  \sqrt{\frac{2 \, \bar a_k(\lambda)}{\bar c(\lambda)\, (k+1)}}, \quad \ol K_{2p-k+1}^\theta (\lambda) = - \frac{\bar d_p(\lambda)}{2p-k+1} \sqrt{\frac{2(k+1)}{\bar c (\lambda) \, \bar a_k}}, \quad \ol Y_k^x (\lambda) = - \sqrt{\frac{\bar c(\lambda) \, \bar a_k(\lambda)}{2(k+1)}}.
		$$
	\end{small}
	\end{theorem}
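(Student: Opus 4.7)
The plan is to follow the parameterization method directly for equation \eqref{eq-conj-tors-edo}, in parallel with the proof of Theorem \ref{teorema_analitic_tors}, but treating it as a genuine fixed-point problem for an integro-differential operator in a Banach space of analytic functions, rather than deducing the vector-field case from the maps case.

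First, construct polynomial-in-$u$ approximations $\MK_N(u,\theta,t,\lambda)$ and $\MY_N(u,\lambda)$ of $K$ and $Y$, of arbitrary order $N$, satisfying \eqref{eq-conj-tors-edo} up to an error of order $O(u^{N+\alpha})$ componentwise; this is the content of the algorithm of Theorem \ref{prop_tors_edo_sim}. The recursion is done by matching monomials: the leading terms $u^2$, $u^{k+1}$, $u^{2p-k+1}$ in the three components of $K$, together with $u^k$ in $\MY^x_N$, are pinned down directly by the reduced form \eqref{fnormal_tors_edo1} and yield the explicit expressions for $\ol K^y_{k+1}(\lambda)$, $\ol K^\theta_{2p-k+1}(\lambda)$ and $\ol Y^x_k(\lambda)$ stated in the theorem. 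The hypotheses $\bar a_k(\lambda)>0$ and $\bar c(\lambda)>0$ make the square roots real and force $\ol Y^x_k(\lambda)<0$, while $2p>k-1$ guarantees that the exponents in $\MK_N$ and $\MY_N$ appear in the consistent order $k<2p-k+1$ needed for the recursion. Each subsequent coefficient of $\MK_N$ is determined by a small-divisor equation of the form $\omega\cdot\partial_\theta \phi + \nu\cdot\partial_{\theta'}\phi = \psi$ on the quasiperiodic hull in $t$, which is solvable thanks to the Diophantine hypothesis on $(\omega,\nu)$ once the mean of $\psi$ has been absorbed into $\MY_N$.

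Second, write $K=\MK_N+\Delta$, fix $Y=\MY_N$, and rewrite \eqref{eq-conj-tors-edo} as
\begin{equation*}
\ML\Delta \;=\; E \,+\, \MN(\Delta),
\end{equation*}
where $E=-X\circ\MK_N+\partial_{(u,\theta)}\MK_N\cdot\MY_N+\partial_t\MK_N$ is the error (of order $u^{N+\alpha}$ by Step 1), $\ML$ is the linearisation of the invariance operator at $(\MK_N,\MY_N)$, and $\MN(\Delta)$ collects the terms that are at least quadratic in $\Delta$. I would then work, exactly as in Section \ref{sec-funcional}, in Banach spaces of bounded analytic functions of $(u,\theta,t,\lambda)$ on a complex neighbourhood of $(0,\rho)\times\T^d\times\R\times\Lambda$ (a sector in $u$ with vertex at $0$, complex strips in $\theta$ and in $t$), valued in the quasiperiodic hull, and weighted by appropriate powers of $u$ tuned componentwise so that the three components of $\Delta$ vanish with the exponents prescribed in the statement.

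Third, invert $\ML$. To leading order $\ML$ acts as transport along the skew-product flow $(u(\tau),\theta+\omega\tau,t+\tau)$ of the vector field $(\MY^x_N(u),\omega,1)$, composed with the cohomological operator $\omega\cdot\partial_\theta+\nu\cdot\partial_{\theta'}$ on the hull. Since $\ol Y^x_k(\lambda)<0$, the $u$-component of this flow is attracted to $u=0$ with the parabolic rate $u(\tau)\sim(|\ol Y^x_k|(k-1)\tau)^{-1/(k-1)}$; integrating along the flow from $\tau=0$ to $\tau=+\infty$ produces a right inverse $\ML^{-1}$ whose norm in the weighted spaces can be made arbitrarily small by shrinking $\rho$ and increasing $N$. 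With these estimates in hand, $\Delta\mapsto \ML^{-1}(E+\MN(\Delta))$ is a contraction on a small closed ball, and the Banach fixed point theorem supplies the unique $\Delta$. Analyticity of $K=\MK_N+\Delta$ on $(0,\rho)\times\T^d\times\R\times\Lambda$ and $C^1$ regularity up to $u=0$ follow from the ambient analytic framework together with the high vanishing order of $\Delta$ at $u=0$, and analytic dependence on $\lambda$ is automatic since every step is analytic in the parameter.

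The main obstacle is the construction of $\ML^{-1}$ in Step 3. Unlike the map case, the continuous-time invariance equation contains the additional $\partial_t K$ term, so the "flow" driving the inversion is the skew product that also translates in $t$, and the quasiperiodic coefficients of $X$ must be controlled uniformly for $\tau\in[0,\infty)$ along the parabolic decay. The weights in $u$ for the three components of $\Delta$ must be tuned carefully so that $\ML^{-1}$ is bounded with a small operator norm while simultaneously $\MN$ is Lipschitz with small constant on the ball where the fixed-point argument is performed; balancing these two requirements is what determines the admissible range of $\rho$ and the required $N$.
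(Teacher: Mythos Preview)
Your proposal follows essentially the same strategy as the paper's proof: build a high-order formal approximation via Theorem \ref{prop_tors_edo_sim}, pass to the hull in the time variable, set up a fixed-point equation for the correction $\Delta$ in weighted Banach spaces of analytic functions on a sector in $u$, invert the linear part by integrating along the parabolic flow of $J=(\MY_N^x(u),\omega,\nu)$, and close with the Banach fixed point theorem. The overall architecture is correct.

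There is, however, one point where your decomposition differs from the paper's and where your description becomes imprecise. You take $\ML$ to be the \emph{full} linearisation of the invariance operator and $\MN(\Delta)$ to be the purely quadratic remainder. The paper instead takes the linear operator to be \emph{pure transport}, $\MS_{n,J}f=Df\cdot J$, and puts \emph{all} remaining terms---including the linear cross-terms such as $\Delta^y\,c\circ(\MK_n^\theta+\Delta^\theta)$ and $DP\circ\MK_n\cdot\Delta$---into the operator $\MN_{n,X}$. The advantage of the paper's split is that $\MS_{n,J}^{-1}$ is \emph{exactly} given by $-\int_0^\infty(\cdot)\circ\varphi_s\,ds$ (Lemma \ref{invers_S_tor_edo}), with the clean bound $\|\MS_{n,J}^{-1}\|\le\frac{k-1}{\mu n}$; the Lipschitz constant of $\MN_{n,X}$ is then merely bounded (Lemma \ref{lema_N_tors_edo}), and contraction is obtained because $\|\MS^{-1}\|$ is small for $n$ large. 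With your decomposition, integrating along the flow only inverts the leading transport part of $\ML$, not the full $\ML$; you would then need an additional argument (e.g.\ a Neumann series absorbing the $DX\circ\MK_N$ piece) which you do not spell out.

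A related imprecision: you say that in Step~3 the inversion involves ``the cohomological operator $\omega\cdot\partial_\theta+\nu\cdot\partial_{\theta'}$ on the hull''. In the paper, small-divisor equations appear \emph{only} in Step~1 (the formal approximation, Theorem \ref{prop_tors_edo_sim}); the correction step uses no Diophantine estimates at all---the flow integration handles the $(\theta,\tau)$-translation automatically, and convergence of the integral comes solely from the parabolic decay in $u$. Finally, two small slips: the sign $\ol Y^x_k<0$ is a \emph{choice} (one of the two roots), not forced by $\bar a_k,\bar c>0$; and the ordering $k<2p-k+1$ you assert need not hold (the hypothesis $2p>k-1$ only gives $2p-k+1\ge1$).
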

	
	\vspace{3pt}
	
	\begin{theorem}[\emph{A posteriori} result for vector fields] \label{teorema_posteriori_tors_edo}
		Let $X$ be an analytic vector field of the form  \eqref{fnormal_tors_edo1} and let $\nu \in \R^{d'}$ be the time frequencies of $X$. Let  $\wh K:(-\rho, \rho)  \times \T^d \times \R  \times \Lambda \to \R^2 \times \T^d $ and $\wh Y = (-\rho, \rho)  \times \T^d \times \R \times \Lambda \to \R \times \T^d$
		be an analytic map and an analytic vector field, respectively, of the form
		\begin{equation*}
			K(u, \theta, t, \lambda) = (u^2, \,  \ol K_{k+1}^y(\lambda) u^{k+1},\,  \theta + \ol K_{2p-k+1}^\theta(\lambda) u^{2p-k+1} ) + (O(u^3), O(u^{k+2}), O(u^{2p-k+2})),
		\end{equation*}
		and 
		$$
		Y(u, \theta, t, \lambda) =
			(\ol Y^x_k(\lambda) u^k + O(u^{k+1}), \, 
			\omega ),
		$$
		with  $\ol Y_k^x(\lambda)<0$, depending quasiperiodically on $t$ with the same frequencies as $X$, satisfying
		\begin{align*} 
			X(\wh K(u, \theta, t, \lambda),t, \lambda) - \partial_{(u, \theta)} \wh K(u, \theta, t, \lambda)  \cdot \wh Y(u, \theta, t, \lambda) &- \partial_t \wh K(u, \theta, t, \lambda)  \\
			&=  (O(u^{n+k}), O(u^{n+2k-1}), \, O(u^{n+2p-1})),
		\end{align*}
		for some $n\ge 2$.
		
		Then, there exists a $C^1$ map $K:[0,\rho) \times \T^d \times \R \times \Lambda \to \R^2 \times \T^d$, analytic in $(0,\rho) \times \T^d \times \R \times \Lambda$, and an analytic vector field $Y:(-\rho,\rho) \times \T^d \times \R \times \Lambda \to \R \times \T^d$  such that 
		\begin{align*}
			X(K(u, \theta, t, \lambda), t, \lambda) - \partial_{(u, \theta)} K(u, \theta, t, \lambda)  \cdot Y(u, \theta, t, \lambda) &- \partial_t K(u, \theta, t, \lambda) = 0, \\
			& (u, \theta, t, \lambda) \in [0, \, \rho) \times \T^d \times \R  \times \Lambda,
		\end{align*}
		and 
		$$
		K (u , \theta, t, \lambda) -  \wh K(u, \theta, t, \lambda) =  (O(u^{n+1}), O(u^{n+k}), \, O(u^{n+2p-k})) ,
		$$
		$$ 
		Y(u, \theta, t, \lambda) -\wh Y(u, \theta, t, \lambda) = \left\{ \begin{array}{ll}
			(  O(u^{2k-1}) ,  0 ) & \text{ if }\; n\le k, \\
			(0, 0)  & \text{ if } \; n>k.
		\end{array} \right. 
		$$
	\end{theorem}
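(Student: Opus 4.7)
The plan is to mirror the strategy used for maps (Theorem \ref{teorema_posteriori_tors}) but carried out directly at the level of the invariance equation \eqref{eq-conj-tors-edo}, as announced in the introduction. I would look for the true solution as a correction $K = \wh K + \Delta$, $Y = \wh Y + \eta$. Substituting into \eqref{eq-conj-tors-edo} and isolating the linear part yields
$$
\ML_{\wh K, \wh Y}(\Delta, \eta) = -\ME - \MN(\Delta, \eta),
$$
where $\ME := X\circ \wh K - \partial_{(u,\theta)}\wh K \cdot \wh Y - \partial_t \wh K$ is the known error, with vanishing orders $(u^{n+k}, u^{n+2k-1}, u^{n+2p-1})$ by hypothesis, and $\MN(\Delta, \eta)$ collects the terms of order at least two in $\Delta$. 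The goal is to invert $\ML_{\wh K, \wh Y}$ on appropriate Banach spaces of analytic functions and then apply the Banach fixed point theorem.

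Explicitly,
$$
\ML_{\wh K, \wh Y}(\Delta, \eta) = DX(\wh K, t)\,\Delta - \partial_{(u,\theta)}\Delta\cdot \wh Y - \partial_t \Delta - \partial_{(u,\theta)}\wh K \cdot \eta,
$$
and its dominant piece is the first-order transport operator
$-\ol Y^x_k(\lambda)\,u^k\,\partial_u - \omega\cdot\partial_\theta - \nu\cdot\partial_{\theta'}$
acting on the hull $(u,\theta,\theta')\in (0,\rho)\times\T^d\times \T^{d'}$, whose transverse coefficient $\ol Y_k^x(\lambda)u^k$ has a definite sign. The $(\omega,\nu)$ part is inverted in Fourier series using the Diophantine assumption, and the $u$ part is inverted by integration along its characteristic, which maps a source of order $u^{n+k}$ to a solution of order $u^{n+1}$; the analogous computation on the other two components fixes the weights $(u^{n+1}, u^{n+k}, u^{n+2p-k})$ in the statement. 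The algebraic coupling introduced by $DX(\wh K, t)$ and by $\partial_{(u,\theta)}\wh K \cdot \eta$ is treated perturbatively, and the components of $\eta$ are adjusted to absorb the only residual obstruction: $\eta^x$ takes up the averaged kernel, which is nontrivial exactly when $n\le k$, while $\eta^\theta$ can be taken to vanish, matching the case distinction in the conclusion.

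With this inversion lemma at hand, I would introduce weighted Banach spaces of $\lambda$-analytic functions on $\MW_\rho\times \T^d_\sigma\times \T^{d'}_\sigma\times\Lambda$ that vanish at $u=0$ with the expected orders for $\Delta$, and set up the fixed point
$$
\Phi(\Delta, \eta) = -\ML_{\wh K, \wh Y}^{-1}\bigl(\ME + \MN(\Delta, \eta)\bigr).
$$
The improved vanishing of $\ME$ places $\ML_{\wh K, \wh Y}^{-1}\ME$ in a ball of small radius after shrinking $\rho$, while $\MN$ being at least quadratic in $\Delta$ makes $\Phi$ a contraction on that ball. Banach's theorem then produces the unique $(\Delta, \eta)$ with the prescribed orders; analyticity on $u\in (0,\rho)$ and in $\lambda$ is inherited from the space, and $C^1$ regularity up to $u=0$ follows from the weights together with a boundary argument analogous to the one for maps.

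The main obstacle I expect is the inversion lemma for $\ML_{\wh K, \wh Y}$. It has to handle three effects simultaneously: (i) small divisors arising from the Diophantine frequency $(\omega,\nu)$; (ii) the degenerate characteristic flow of $u^k\partial_u$, which requires preserving analyticity for $u>0$ while retaining uniform bounds up to $u=0$; and (iii) the algebraic inversion of the nilpotent block coming from $DX(\wh K,t)$, which is what forces the precise shift between the orders of $\ME$ and those of $\Delta$ appearing in the statement. Once this step is carried out along the lines of the machinery in Section \ref{sec-funcional}, the rest of the argument is a standard contraction in weighted analytic spaces.
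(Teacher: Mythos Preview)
Your strategy differs from the paper's in a structural way. The paper does not linearize the full invariance operator and invert $\ML_{\wh K, \wh Y}$; instead it proceeds in two decoupled phases, exactly as in the proof of Theorem~\ref{teorema_posteriori_tors}. First, if the given $n$ is below the contraction threshold $n_0 = \max\{m_0, k+1\}$ of Lemma~\ref{lema_contraccio_tors_edo}, it improves the approximation $(\wh K, \wh Y)$ up to order $n^*\ge n_0$ by iterating the formal algorithm of Theorem~\ref{prop_tors_edo_sim}; this is the \emph{only} place the Diophantine condition on $(\omega,\nu)$ enters, namely to solve the small-divisor equations \eqref{SDequations_induccio_edo_sim} for the oscillatory parts of the new coefficients, and the correction to $Y$ (the term $\ol Y_{2k-1}^x u^{2k-1}$ when $n\le k$) is fixed here once and for all. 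Second, with $Y$ now \emph{frozen} at the improved $\MY^*$, the paper seeks $\Delta$ alone as a fixed point of $\MT_{n^*,X} = (\MS_{n^*,J}^\times)^{-1}\circ\MN_{n^*,X}$, where $\MS_{n,J} f = Df\cdot J$ is the bare transport operator, inverted by integration along the flow $\varphi_s$ of $J$ (Lemma~\ref{invers_S_tor_edo}). No Fourier decomposition and no small divisors appear in this second phase: the integral $-\int_0^\infty \eta\circ\varphi_s\,ds$ converges simply because $\varphi^u_s\to 0$, and the Diophantine hypothesis plays no role there.

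Your proposal instead keeps $DX(\wh K,t)\,\Delta$ inside the linear part and solves for $(\Delta,\eta)$ jointly. Two points deserve correction. First, the sentence ``the $(\omega,\nu)$ part is inverted in Fourier series using the Diophantine assumption, and the $u$ part is inverted by integration along its characteristic'' is misplaced: the combined transport operator $\ol Y_k^x u^k\partial_u + \omega\cdot\partial_\theta + \nu\cdot\partial_{\theta'}$ is inverted along its full characteristics $(\varphi_s^u(u),\theta+\omega s,\theta'+\nu s)$, not mode by mode, and no Diophantine condition is needed for that. Second, if you absorb the nilpotent coupling $c(\theta)\Delta^y$ into $\ML$ so that $\MN$ is genuinely quadratic, you must then invert a coupled \emph{system} along the flow rather than a scalar operator; this is exactly what the paper avoids by leaving that coupling in $\MN_{n,X}$ and paying for it with the $1/n$ factor in $\|\MS_{n,J}^{-1}\|$ (Lemma~\ref{invers_S_tor_edo}), which is what forces the preliminary formal improvement to order $n_0$. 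Your route trades that improvement step for a harder inversion lemma and a simultaneous determination of $\eta$; the paper's route trades the harder inversion for a clean separation of ``small divisors at the level of jets'' from ``pure contraction along the parabolic flow''.
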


	\begin{remark}
	The first component of the map $R$ and the vector field $Y$ (corresponding to the directions normal to the invariant torus) given in Theorems \ref{teorema_analitic_tors} and \ref{teorema_analitic_tors_edo} is the normal form of the dynamics of a one-dimensional system in a neighborhood of a parabolic point (\cite{chen68, takens73}). In the second component, $R$ and $Y$ define a rigid rotation of frequency $\omega$.  
\end{remark}
	
	Finally, the following result is a particular case of a slightly modified version of Theorem \ref{teorema_analitic_tors_edo}. It will be used later on in Section \ref{sec-appl-tors} applied to the study of the scattering of helium atoms off copper surfaces. The proof, which is completely analogous to the one of Theorem \ref{teorema_analitic_tors_edo}, is sketched in Appendix \ref{App-A1}.
	
	\begin{theorem} \label{teorema_analitic_helicoure}
		Let $X$ be an analytic vector field of the form 
		\begin{equation*} 
			X(x, y, \theta) =
			\begin{pmatrix}
				c(\theta) y \\
				b(\theta )x y + O(y^2)  \\
				\omega + d(\theta )y + O(\|(x,y)\|^2)
			\end{pmatrix},
		\end{equation*}
		with $(x, y) \in \R^2$, $\theta \in \T^d$, $\omega \in \R^d$. Assume that $\bar  c > 0$, $\bar b  \neq 0$ and $\bar d \neq 0$. Assume also that $\omega$ is Diophantine.
		
		Then, there exists $\rho > 0$ and  a $C^1$ map  $K: [0, \, \rho ) \times \T^d \to \R^2 \times \T^d$, analytic in $(0,\rho) \times \T^d $,  of the form 
		\begin{equation*}
			K(u, \theta ) = (u, \,  \ol K_{2}^y u^{2},\,  \theta + \ol K_{1}^\theta  u ) + (O(u^2), O(u^{3}), O(u^{2})),
		\end{equation*}
		and a polynomial vector field $Y$  of the form 
		$$
		Y(u, \theta) = Y(u )= 
		(	\ol Y^x_2 u^2 + \ol Y^x_{3} u^{3}, \, 
			\omega ),
		$$ 
		with $\ol Y_2^x <0$, such that 
		\begin{align*}
			X(K(u, \theta  )) - D K(u, \theta)  \cdot Y(u, \theta)  = 0, \qquad (u, \theta) \in [0, \, \rho) \times \T^d.
		\end{align*}
		Moreover, we have
	\begin{equation} \label{coefs1_th_helicoure}
		\ol K_{2}^y =   \frac{b}{2c}, \quad \ol K_{2p-k+1}^\theta = \frac{2d}{c}, \quad \ol Y_2^x  = \frac{b}{2}. 
	\end{equation}	
	\end{theorem}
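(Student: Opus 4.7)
The plan is to mirror the proof of Theorem \ref{teorema_analitic_tors_edo}, adapting the argument to the degenerate setting $\ol a_k=0$ where the dominant nonlinearity in the second component is the mixed term $b(\theta)xy$ instead of a pure power $a_k(\theta)x^k$. The proof breaks into two steps: (i) constructing formal polynomial approximations $\wh K_n$, $\wh Y_n$ satisfying the invariance equation $X\circ K=\partial_{(u,\theta)}K\cdot Y$ modulo a remainder of order $n$, and (ii) applying a variant of the \emph{a posteriori} Theorem \ref{teorema_posteriori_tors_edo} tailored to the new leading exponents.

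For step (i), I plug the ansatz
$$K(u,\theta)=\Bigl(u+\sum_{j\ge 2}K_j^x(\theta)u^j,\ \sum_{j\ge 2}K_j^y(\theta)u^j,\ \theta+\sum_{j\ge 1}K_j^\theta(\theta)u^j\Bigr),\qquad Y(u)=\Bigl(\sum_{j\ge 2}\ol Y_j^x u^j,\ \omega\Bigr),$$
into the invariance equation and match coefficients order by order. At the lowest order, the first component yields $\ol Y_2^x=\ol c\,\ol K_2^y$ after averaging; the second component at order $u^3$ gives $\ol b\,\ol K_2^y=2\ol K_2^y\,\ol Y_2^x$ and hence the values $\ol Y_2^x=\ol b/2$ and $\ol K_2^y=\ol b/(2\ol c)$; and the third component at order $u^2$ pins down $\ol K_1^\theta=2\ol d/\ol c$ after averaging. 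At every subsequent order, the oscillatory parts of the new coefficients are recovered by inverting the small-divisor operator $\omega\cdot\partial_\theta$ on zero-mean functions, which the Diophantine assumption on $\omega$ makes possible with the standard Fourier-series estimates. Iterating this scheme yields approximations $\wh K_n$, $\wh Y_n$ of any desired order $n\ge 2$.

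For step (ii), the functional setup of Section \ref{sec-funcional} must be recalibrated to reflect the new leading exponents: here $K^x$ starts at $u$ (rather than $u^2$), $K^y$ at $u^2$ (rather than $u^{k+1}$) and $Y^x$ at $u^2$ (rather than $u^k$). Keeping the structure of the functional equation and of the operators identical but adjusting the weights of the analytic norms accordingly, I would then apply the Banach fixed-point theorem to the correction $K-\wh K_n$ in a suitable weighted space. The parabolic-contraction direction is preserved after the reduction of Section \ref{sec-reduced} (with the sign convention $\ol Y_2^x<0$), and analyticity on $(0,\rho)\times\T^d$ follows by the usual propagation argument once a $C^1$ solution has been produced near $u=0$.

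The main obstacle, I expect, is verifying that the transfer-type operator driving the fixed-point argument retains its contractive nature in the recalibrated spaces, given that the dominant forcing in the second component is now $b(\theta)K^x K^y$ rather than $a_k(\theta)(K^x)^k$. In particular, one must check that each Newton-type correction in the iteration still gains one full power of $u$ despite the mixed-variable structure of the nonlinearity, and that the weights compensate correctly for the change $K^x=u+\cdots$ in the first component. Once this is settled, the rest of the argument is a direct translation of the proof of Theorem \ref{teorema_analitic_tors_edo}, and existence, the asymptotic expansion, and the stated formulas for $\ol K_2^y,\ol K_1^\theta,\ol Y_2^x$ in \eqref{coefs1_th_helicoure} follow.
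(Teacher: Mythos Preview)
Your proposal is correct and follows essentially the same route as the paper's own proof in Appendix~\ref{App-A1}. The paper sketches precisely your two steps: first, an inductive formal construction analogous to Theorem~\ref{prop_tors_edo_sim} yielding approximations $\MK_n$, $\MY_n$ with error $(O(u^{n+2}),O(u^{n+3}),O(u^{n+2}))$; second, a Banach fixed-point argument for the correction $\Delta$ in the recalibrated product space $\MZ_n^\times=\MZ_n\times\MZ_{n+1}\times\MZ_n$, which matches exactly your observation that $K^x$, $K^y$, $K^\theta$ now start at orders $u$, $u^2$, $u$ respectively. The ``main obstacle'' you flag---that the Lipschitz estimate for $\MN_n^y$ must handle the mixed term $b(\theta)K^xK^y$---is confirmed in the paper as the only substantive modification, with the bound $\Lip\MN_n^y\le\max\{\sup|b|,\sup|b/2c|\}+M_n\rho$ replacing the $a_k$-based estimate of Lemma~\ref{lema_N_tors_edo}.

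One small remark: analyticity on $(0,\rho)\times\T^d$ does not require a separate propagation argument after obtaining a $C^1$ solution; it comes for free because the fixed point lives in a space of functions holomorphic on the sector $S(\beta,\rho)\times\T^d_{\sigma'}$, and restricting to real $u>0$ gives real-analyticity directly.
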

	
	\subsection{Applications} \label{sec-appl-tors}
	
In this section we present two applications of the previous results. The first one is a simple illustrative example, and the second one  is related to a problem from chemistry.

\subsubsection{A quasiperiodically forced oscillator}

Consider a particle of mass $1$ moving along a straight line under the action of a potential $V(x)$, with $V(x)= c x^{2n}$, $c >0$, $n \in \N$. When $n=1$, the system is a harmonic oscillator. The corresponding equation of motion is  
$$
\ddot x = - V'(x) = -2n c \,  x^{2n-1}.
$$
Denoting $y=\dot x$ the velocity of the particle,  the equations of motion are 
\begin{align}
\begin{split} \label{oscillador_gen}
\dot x & = y, \\
\dot y & = -2nc \, x^{2n-1}.
\end{split}
\end{align}
System \eqref{oscillador_gen} has the first  integral $H(x,y) = \frac{1}{2}  y^2 +  c x^{2n}$, and hence, the  phase space is  foliated by periodic orbits around the origin, corresponding to the closed curves $\frac{1}{2}  y^2 +  c x^{2n}=h$, for energy levels $h >0$.

We show next that perturbing that oscillator with a suitable external force one can break the center character of the origin of the system and introduce a parabolic stable invariant manifold. Assume that the particle moving under the action of the potential $V(x)$ is also submitted to an external analytic force, $F$ that may depend  on the position $x$, the velocity $\dot x$ and the time $t$.  
 
Now the equations of motion become
\begin{align}
\begin{split} \label{qp_forcat_1dim}
\dot x & = y, \\
\dot y & = -2nc \, x^{2n-1} + F(x, y, t).
\end{split}
\end{align}
System \eqref{qp_forcat_1dim} is a particular case of system \eqref{fnormal_tors_edo1} without its third component. Moreover, if the analytic function $f(x, y, t) = -2ncx^{2n-1} + F(x,y,t)$ satisfies the hypotheses \eqref{condicions_fnormal} and the dependence on $t$ is Diophantine with frequencies $\nu \in \R^d$, then system \eqref{qp_forcat_1dim} satisfies the hypotheses of Theorem \ref{teorema_analitic_tors_edo}.

As a concrete example, take $n=2$ and
$F(x, y, t) = \alpha x^2 g(t)$, with $\alpha >0$ and $g$ a quasiperiodic function of $t$ with frequency $\nu \in \R^d$, $\nu$ Diophantine, and $\bar g >0$. This system is modeled by 
\begin{align}
\begin{split} \label{oscillador1}
& \dot x = y, \\
& \dot y =  - 4cx^3  + \alpha  x^2 g(t).
\end{split}
\end{align}
By Theorem \ref{teorema_analitic_tors_edo}  there are solutions of system \eqref{oscillador1} asymptotic to $(0,0)$, analytic away from $(0,0)$, contained in the stable manifold of the origin. Moreover, one can apply Proposition \ref{prop_tors_edo_sim} (see Section \ref{sec-algoritme-tors}) to obtain the coefficients of an approximation of a parameterization of such stable manifold. 

To look for an unstable manifold of system \eqref{oscillador1} we consider the vector field obtained after changing the sign of the time, $t \mapsto -t$, in \eqref{oscillador1}. The stable manifold of the transformed  system, namely
\begin{align}
\begin{split} \label{unstable-qp-1}
& \dot x = -y, \\
& \dot y =  4cx^3  - \alpha  x^2 g(-t),
\end{split}
\end{align}
will be unstable manifold of system \eqref{oscillador1}. Performing the change of variables $y \mapsto -y$ to system \eqref{unstable-qp-1} we can apply again Theorem \ref{teorema_analitic_tors_edo} to obtain the existence of an analytic stable manifold. Finally, undoing the  changes of variables we have that, system \eqref{oscillador1} has a stable manifold in the lower right plane and an unstable manifold in the upper right plane, both of them analytic away from the origin.  
We have then that for every $ \alpha >0$, an external force  of the form $\alpha  x^2 g(t)$ with  the conditions stated before breaks the oscillatory behavior of the system and induces  solutions that brings the particle to the origin and others out of the it.

\subsubsection{Scattering of He atoms off Cu corrugated surfaces}

In the paper\cite{gbm97}, the authors study the phase-space structure of a differential equation modelling the scattering of helium atoms off copper corrugated surfaces.
	Concretely,  elastic collisions of $^4$He atoms with corrugated Cu surfaces are considered, in particular those made of Cu(110) and Cu(117). The system, which can be adequately treated at the classical level,  can be modeled by the following two degrees of freedom Hamiltonian describing the motion of a $^4$He  atom, 
	\begin{equation} \label{ham-he-cu}
		H(x, z, p_x, p_z) = \frac{p_x^2 + p_z^2}{2m} + V(x, z), 
	\end{equation}
	where $x$ is the coordinate parallel to the copper surface and $z$ is the coordinate perpendicular to it, $p_x$ and $p_z$ are the respective momenta, and $m$ is the mass of the atom. The potential energy $V(x, z)$ is given by
	$$
	V(x, z) = V_M(z) + V_C(x, z),
	$$
	where $V_M(z) = D(1-e^{-\alpha z})^2$ is the Morse potential and 
	$V_C(x, z) = D e^{-2 \alpha z} g (x)$ is the coupling potential, with $D=6.35$ meV, $\alpha = 1.05$ $\AA^{-1}$, and $g(x)$ is a periodic function. Thus the variable $x$ can be thought as an angle. For more information on the coefficients of the Morse and coupling potentials, see Table 1 of \cite{gbm97}.
	
	The equations of motion derived from the Hamiltonian function  \eqref{ham-he-cu} are 
	\begin{equation} \label{eq-ham-he-cu}
		\dot x = \frac{p_x}{m}, \qquad
		\dot z =  \frac{p_z}{m}, \qquad
		\dot p_x = -D  e^{-2 \alpha z} g'(x), \qquad
		\dot p_z = -2D\alpha e^{-\alpha z} + 2 D \alpha e^{-2\alpha z} (1+g(x)).
	\end{equation}
This system has a periodic orbir at infinity (see below for the precise meaning). 
	
We will use the results presented in Section \ref{mainresults} to show that the parabolic periodic orbit at infinity has stable and unstable invariant manifolds. 
This means that for certain initial conditions the helium atom goes away spiraling asymptotically to a periodic orbit, and also, for other initial conditions with position far away, the atom comes asymptotically from the periodic orbit.

	Since \eqref{eq-ham-he-cu} is a Hamiltonian system, the energy $H$ is conserved, and thus each  solution of the system is contained in a level set $H(x, z, p_x, p_z) = h$. Therefore we can reduce system \eqref{eq-ham-he-cu} to a three dimensional system restricting it to an energy level, $H(x, z, p_x, p_z) = h$,  removing the equation for $\dot p_x$. The obtained system reads
	\begin{align*}
		\dot x &= \frac{1}{m} \big{(} 2m(h-D(1- e^{-\alpha z})^2 - D e^{-2 \alpha z} g(x)) - p_z^2 \big{)}^{1/2}, \\
		\dot z &= \frac{p_z}{m}, \\ 
		\dot p_z &= -2D\alpha e^{-\alpha z} + 2 D \alpha e^{-2\alpha z} (1+g(x)).
	\end{align*}
	Next, to study the motion for very big values of $z$ we perform the McGehee-like change of variables given by $y = -e^{-\alpha z}$. Now the set $y=0$ corresponds to  infinit distance from the copper surface. To adapt the notation to the one of Section \ref{mainresults}, we write $ \theta= x$ and $p=p_z$. We get
	\begin{align} \begin{split} \label{sist-fisic-he-cu}
			\dot p &= 2 D \alpha y + 2 D \alpha y^2 (1+g(\theta)), \\
			\dot y &= - \frac{\alpha}{m} py, \\
			\dot \theta &= \frac{1}{m} \big{(} 2m(h-D(1-y)^2 - D y^2 g(\theta)) - p^2 \big{)}^{1/2},
		\end{split}
	\end{align}
	with $y \leq 0$, $p \in \R$ and $\theta \in  \T$ 
	
The set $\{p = y = 0\} $ is invariant for \eqref{sist-fisic-he-cu}, and corresponds to a periodic orbit at the infinity of system \eqref{eq-ham-he-cu}. 
	 For system \eqref{sist-fisic-he-cu} we have the following result.

	\begin{theorem} 
		Let $X$ be the vector field associated to system  \eqref{sist-fisic-he-cu}, and assume that $h > D$. Then,  the set $\gamma = \{ p = y = 0 \}$ is a periodic orbit and it has stable and unstable invariant manifolds.
		Concretely, there exist $\rho > 0$ and two $C^1$ maps, $K^-, K^+: [0,\rho) \times \T \to \R^2 \times \T$, analytic on $(0,\rho) \times \T$, and two analytic vector fields, $Y^-, Y^+:  [0,\rho) \times \T \to \R \times \T$ 
		of the form
		\begin{equation} \label{param-sta-hecu}
			K^-(u, \theta) = \begin{pmatrix}
				u + O(u^2) \\
				K_2^y u^2 + O(u^3) \\
				K_1^\theta u + O(u^2)
			\end{pmatrix}, \qquad 
			Y^-(u, \theta) = \begin{pmatrix}
				-Y_2 u^2 +  Y_3 u^3 \\
				\omega
			\end{pmatrix},
		\end{equation}
		corresponding to the stable manifold, and 
		\begin{equation} \label{param-inest-hecu}
			K^+(u, \theta) = \begin{pmatrix}
				u + O(u^2) \\
				-K_2^y u^2 + O(u^3) \\
				K_1^\theta u + O(u^2)
			\end{pmatrix}, \qquad 
			Y^+(u, \theta) = \begin{pmatrix}
				Y_2 u^2 + \wh Y_3 u^3 \\
				\omega
			\end{pmatrix},
		\end{equation}
		corresponding to the unstable manifold, with
		\begin{equation} \label{coefs-sta-hecu}
			K_2^y = - \frac{1}{4mD} , \qquad    K_1^\theta = -\frac{1}{\alpha \sqrt{2m(h-D)}}, \qquad
			Y_2 = \frac{\alpha}{2m} , \qquad   \omega =  \sqrt{\frac{2(h-D)}{m}},
		\end{equation}
		such that 
		$$
		X \circ K^- (u,\theta) = DK^- \cdot Y^- (u, \theta) \ \ \text{ and } \ \ X \circ K^+ (u,\theta) = DK^+ \cdot Y^+ (u, \theta), \qquad (u, \theta) \in [0, \rho) \times \T.
		$$
\end{theorem}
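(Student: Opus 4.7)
The plan is to recognize that system \eqref{sist-fisic-he-cu} fits, after a straightforward identification of variables, into the framework of Theorem \ref{teorema_analitic_helicoure}, so that the existence of $K^-, Y^-$ and the leading coefficients follow at once; the unstable pair $K^+, Y^+$ is then produced by the unstable-manifold analogue of that theorem established in the Appendix.

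First I would verify that $\gamma = \{p = y = 0\}$ is an invariant periodic orbit. Substituting $p = y = 0$ into \eqref{sist-fisic-he-cu} gives $\dot p = \dot y = 0$, while $\dot\theta = \frac{1}{m}\sqrt{2m(h-D)} = \sqrt{2(h-D)/m}$, which is real and positive precisely because $h > D$. Hence $\gamma$ is a periodic orbit with rotation frequency $\omega = \sqrt{2(h-D)/m}$, which (as implicitly required by Theorem \ref{teorema_analitic_helicoure}) we take to be Diophantine; this holds for a full-measure set of energy levels $h$.

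Next I would cast $X$ in the reduced form of Theorem \ref{teorema_analitic_helicoure} by identifying its $(x, y, \theta)$ variables with $(p, y, \theta)$ of \eqref{sist-fisic-he-cu}. The first equation yields $c(\theta) \equiv 2D\alpha$, so $\bar c > 0$; the second gives $b(\theta) \equiv -\alpha/m$, so $\bar b \neq 0$; and for the third I would Taylor-expand the square root, writing the radicand as $2m(h-D) + 4mDy - 2mDy^2(1+g(\theta)) - p^2$ and factoring out $m^2\omega^2 = 2m(h-D)$, to obtain
\begin{equation*}
\dot\theta = \omega + \frac{2D}{m\omega}\, y + O(\|(p,y)\|^2),
\end{equation*}
so that $d(\theta) \equiv 2D/(m\omega)$ and $\bar d \neq 0$. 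With all hypotheses of Theorem \ref{teorema_analitic_helicoure} satisfied, that theorem produces the stable manifold $K^-$ and vector field $Y^-$ of the form \eqref{param-sta-hecu}, and the formulas \eqref{coefs1_th_helicoure} give the values in \eqref{coefs-sta-hecu} after substituting $b = -\alpha/m$, $c = 2D\alpha$, $d = 2D/(m\omega)$, together with $\omega = \sqrt{2(h-D)/m}$. For the unstable manifold I would invoke the unstable-manifold analogue of Theorem \ref{teorema_analitic_helicoure} from the Appendix; equivalently, one applies the stable theorem to $-X$ after the sign changes in $p$, $y$ and $\theta$ needed to restore $\bar c > 0$ and positive rotation frequency, and then undoes the transformation. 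The parameterization \eqref{param-inest-hecu} arises with precisely the sign differences from \eqref{param-sta-hecu} induced by those reflections.

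The step I expect to be most delicate is the Taylor expansion of $\dot\theta$: one must correctly identify $d(\theta)$ as the coefficient of the linear-in-$y$ term, and verify that no linear-in-$p$ contribution appears in the remainder, which would spoil the $O(\|(x,y)\|^2)$ structure demanded by the theorem. The latter is immediate from the fact that $p$ enters the radicand only through $-p^2$, but it is the place where book-keeping must be done carefully. The rest of the argument is routine verification of hypotheses and evaluation of the closed-form expressions supplied by Theorem \ref{teorema_analitic_helicoure}.
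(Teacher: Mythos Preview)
Your approach is essentially the paper's: verify that the system fits the hypotheses of Theorem \ref{teorema_analitic_helicoure} and read off the leading coefficients, then obtain the unstable manifold by time reversal. There is, however, a genuine gap in your verification step. Theorem \ref{teorema_analitic_helicoure} requires the first component of the vector field to be \emph{exactly} $c(\theta)y$, with no higher-order remainder, whereas in \eqref{sist-fisic-he-cu} one has $\dot p = 2D\alpha\, y + 2D\alpha\, y^2(1+g(\theta))$. You cannot simply identify $(x,y,\theta)$ with $(p,y,\theta)$ and apply the theorem as stated. The paper handles this by first performing the near-identity change $\tilde y = y + (1+g(\theta))y^2$ (equation \eqref{canvi-var-hecu}), which absorbs the quadratic term and yields $\dot p = 2D\alpha\,\tilde y$ exactly; Theorem \ref{teorema_analitic_helicoure} is then applied to the transformed system \eqref{sist-he-cu-adaptat}, and the change is undone at the end to recover $K^\pm$ for the original system. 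You should either carry out this preliminary reduction explicitly or invoke the general passage to reduced form described in Section \ref{sec-reduced}. Note that this change injects $\theta$-dependence (through $g(\theta)$ and $g'(\theta)$) into the higher-order terms of $\dot y$, which is precisely why the Diophantine hypothesis on $\omega$ is needed.

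A secondary point: you assert that substituting your values of $b,c,d$ into \eqref{coefs1_th_helicoure} reproduces \eqref{coefs-sta-hecu}, but with $d = 2D/(m\omega)$ one gets $2d/c = 2/(\alpha m\omega)$, not the stated $K_1^\theta = -1/(\alpha\sqrt{2m(h-D)})$. The paper records $d = -d_1$ with $d_1 = D/\sqrt{2m(h-D)}$ after the change of variables; you should recompute the linear-in-$y$ coefficient of $\dot\theta$ carefully in the transformed variables and check the signs coming from the substitution $e^{-\alpha z} = -y$. Your treatment of the unstable manifold via time reversal (plus a sign change in $p$) matches the paper's.
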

	\begin{proof}
			We do the following analytic change of variables  to system \eqref{sist-fisic-he-cu},
		\begin{equation} \label{canvi-var-hecu}
			\tilde p = p, \qquad \tilde y =  y + (1+g(\theta))y^2, \qquad  \tilde \theta = \theta,  
		\end{equation}
		and we expand the right hand side of the third equation in Taylor series around $(p, y)=(0,0)$, so that the new system reads, writing the new variables without tilde, 
		\begin{equation} \label{sist-he-cu-adaptat}
			\dot p = 2D\alpha y, \qquad \dot y = -\frac{\alpha}{m} p y + O(y^2), \qquad \dot \theta = \omega - d_1 y + O(\|(p, y)\|^2),
		\end{equation}
		with $d_1 = \frac{D}{\sqrt{2m (h-D)}}$.

		It is clear that system \eqref{sist-he-cu-adaptat} has a periodic orbit, $\gamma$, at $p=y=0$ parameterized by $\gamma (t) = (0,0, \omega t)$. Moreover, such system satisfies the hypotheses of Theorem \ref{teorema_analitic_helicoure} with $d' =0$, $c(\theta, \lambda) = 2 D \alpha$, $b(\theta, \lambda) = - \tfrac{\alpha}{m}$, and $d(\theta, \lambda) = -d_1$.

	Then, the stated results are a direct consequence of Theorem \ref{teorema_analitic_helicoure}, which provides the existence of an analytic stable invariant manifold of system \eqref{sist-he-cu-adaptat} and the  expressions given in  \eqref{param-sta-hecu} and \eqref{coefs-sta-hecu}.

		Undoing the change of variables \eqref{canvi-var-hecu} we obtain the parameterizations, $K^-$ and $Y^-$, of the stable manifold of $\gamma$ for the original system and the restricted dynamics on it, whose lower order coefficients are the ones in \eqref{coefs-sta-hecu}.

		The existence of the unstable manifold is obtained through the application of Theorem \ref{teorema_analitic_helicoure} to the system obtained doing the change $t \mapsto -t$ to \eqref{sist-he-cu-adaptat}.
		Also, performing the change  $p \mapsto -p$, we obtain 
		\begin{equation} \label{sist-he-cu-3}
			\dot p = 2D\alpha y, \quad \dot y = -\frac{\alpha}{m} p y + O(y^2), \quad \dot \theta = -\omega + d_1 y + O(\|(p, y)\|^2).
		\end{equation}
		Then we can apply again Theorem \ref{teorema_analitic_helicoure} to system \eqref{sist-he-cu-3}, which provides an analytic stable invariant manifold, $\wt K^+$,  asymptotic to $\gamma = \{p=y=0\}$, and an expression for the restricted dynamics, $	\wt Y^+$, parameterized by
		\begin{equation*}
			\wt K^+(u, \theta) = \begin{pmatrix}
				u + O(u^2) \\
				K_2^y u^2 + O(u^3) \\
				-K_1^\theta u + O(u^2)
			\end{pmatrix}, \qquad 
			\wt Y^+(u, \theta) = \begin{pmatrix}
				-Y_2 u^2 + \wt Y_3 u^3 \\
				-\omega
			\end{pmatrix}.
		\end{equation*}
		Finally, going back to the original variables we get the parameterizations of the unstable manifold of $\gamma$ and the restricted dynamics on it, namely $K^+$ and $Y^+$, given in \eqref{param-inest-hecu}.
	\end{proof}

	\section{Formal approximation of  parameterizations of the manifolds} \label{sec-algoritme-tors}

				In this section we provide an algorithm to compute an approximation of a parameterization of the invariant manifolds of a map of the form \eqref{forma_normal_tor} and a vector field  of the form \eqref{fnormal_tors_edo1}.
				
			From now on, the superindices $x$, $y$ and $\theta$ on the symbol of a function or an operator with values in $\R^2 \times \T^d$ denote the respective components of the function or the operator. In the next sections we also use the superindices $u, \,  \theta$ and $t$, respectively, for functions or operators that take values in $\C \times \T^d_\sigma \times \T^{d'}_\sigma$.

	\subsection{The case of maps} \label{sec-algoritme-tors-maps}
	
First, we recall a basic result concerning Diophantine vectors and the small divisors equation.
	
	Let 
	$$
	\T^d_\sigma = \{   \theta = (\theta_1 , \cdots , \theta_d)  \in (\C/\Z)^d \,  | \ |\text{Im} \,  \theta| < \sigma  \}
	$$
	denote a complex torus of dimension $d$. We also denote  by $\Lambda_\C$ a complex neighborhood of the parameter space, $\Lambda$.

		We say that a vector $\omega \in \R^d$ is \emph{Diophantine (in the map setting)} if there exists $c >0$ and $\tau \geq d$ such that 
		$$
		|\omega \cdot k - l| \geq c|k|^{-\tau}  \qquad \text{for all} \quad k \in \Z^d \backslash \{0\}, \, l \in \Z,
		$$ 
		where $|k| = |k_1| + \dots + |k_d|$ and $\omega \cdot k$ denotes the scalar product.
		
		Along this section, when solving cohomological equations, we will encounter the so-called \emph{small divisors equation.} In the map setting this equation has the following form, 
		\begin{equation} \label{SDequation}
			\varphi (\theta + \omega, \lambda) - \varphi(\theta, \lambda) = h (\theta, \lambda),
		\end{equation}
		with $h : \T^d \times  \Lambda \to \R^n$ and $\omega \in \R^d$. 
		To find a solution $\varphi(\theta, \lambda)$ of \eqref{SDequation} we consider the Fourier expansion of $h$ with respect to $\theta$: 
		$$
		h(\theta, \lambda) = \sum_{k \in \Z^d} h_k(\lambda) e^{2 \pi i k \cdot \theta} .
		$$
		If $h$ has zero average and $k \cdot \omega \notin \Z$ for all $k \neq 0$, then equation \eqref{SDequation} has the formal solution 
		$$
		\varphi (\theta, \lambda ) = \sum_{k \in \Z^d} \varphi_k(\lambda) e^{2 \pi i k \cdot \theta}, \qquad \varphi_k(\lambda)  = \frac{h_k(\lambda)}{1- e^{2\pi i k \cdot \omega}}, \qquad k \neq 0.
		$$
		Note that all coefficients $\varphi_k$ are uniquely determined except $\varphi_0$ (the average of $\varphi$), which is free. 
		
		The following well-known result establishes the existence of a solution to equation \eqref{SDequation} when $h$ is analytic.
		\begin{lemma}[Small divisors lemma for maps] \label{SDlemma_maps}
			Let $h: \T^d_\sigma \times \Lambda_\C \to \C^n$ be an analytic function such  that $\sup_{(\theta, \lambda) \in \T^d_{\sigma}\times \Lambda_\C} \|h(\theta, \lambda)\| \, < \infty$ and having zero average.  Let $\omega \in \R^d$ be Diophantine with $\tau  \geq d$. Then, there exists a unique analytic solution $\varphi : \T^d_\sigma \times \Lambda_\C \to \C^n$ of \eqref{SDequation} with zero average. Moreover,
			$$
			\sup_{(\theta, \lambda ) \in \T^d_{\sigma-\delta} \times \Lambda_\C} \|\varphi(\theta, \lambda)\| \, \leq \, C \delta^{-\tau} \sup_{(\theta, \lambda) \in \T^d_\sigma \times \Lambda_\C} \,  \|h(\theta, \lambda)\|, \qquad 0 < \delta < \sigma,
			$$
			where $C$ depends on $\tau$ and $d$ but not on $\delta$.
		\end{lemma}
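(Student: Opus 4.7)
The plan is to proceed via Fourier expansion in the angle variable, treating $\lambda \in \Lambda_\C$ passively since all estimates will be uniform in $\lambda$. First I would write $h(\theta, \lambda) = \sum_{k \in \Z^d} h_k(\lambda)\, e^{2\pi i k \cdot \theta}$ with Fourier coefficients obtained by integration over $\T^d$, noting that $h_0(\lambda) = 0$ by the zero-average hypothesis and that each $h_k$ inherits analyticity in $\lambda$ from $h$. Shifting each contour into $\T^d_\sigma$ componentwise in the direction opposite to $\mathrm{sign}(k_j)$ (which is permitted because $h$ is analytic and bounded there) yields the standard Paley--Wiener decay
$$
\|h_k(\lambda)\| \le \|h\|_\sigma \, e^{-2\pi |k|\sigma}, \qquad k \in \Z^d,
$$
where $\|\cdot\|_\sigma$ abbreviates the supremum over $\T^d_\sigma \times \Lambda_\C$.

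Second, I would define the candidate solution termwise by $\varphi_0(\lambda) := 0$ and
$$
\varphi_k(\lambda) := \frac{h_k(\lambda)}{1 - e^{2\pi i k \cdot \omega}}, \qquad k \ne 0.
$$
The Diophantine assumption $|\omega \cdot k - l| \ge c|k|^{-\tau}$, combined with the elementary bound $|1 - e^{2\pi i x}| \ge 4 \, \mathrm{dist}(x, \Z)$ applied to $x = \omega \cdot k$, gives $|1 - e^{2\pi i k \cdot \omega}| \ge 4c |k|^{-\tau}$. In particular the denominators never vanish, and
$$
\|\varphi_k(\lambda)\| \le \frac{1}{4c} |k|^\tau \|h\|_\sigma \, e^{-2\pi |k|\sigma}.
$$

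Third, the series $\varphi(\theta,\lambda) := \sum_{k \ne 0} \varphi_k(\lambda) e^{2\pi i k \cdot \theta}$ converges absolutely and uniformly on $\T^d_{\sigma-\delta} \times \Lambda_\C$ for every $0 < \delta < \sigma$, since
$$
\|\varphi\|_{\sigma-\delta} \le \frac{\|h\|_\sigma}{4c} \sum_{k \in \Z^d \setminus \{0\}} |k|^\tau e^{-2\pi |k|\delta}.
$$
The resulting $\varphi$ is analytic on $\T^d_{\sigma-\delta} \times \Lambda_\C$ (analytic in $\lambda$ because each $h_k$ is, with uniform convergence), has zero average by construction, and satisfies \eqref{SDequation} coefficient by coefficient. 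Uniqueness in the zero-average class is immediate: any solution $\psi$ of the homogeneous equation must satisfy $(1 - e^{2\pi i k \cdot \omega})\psi_k = 0$ for all $k$, and the Diophantine condition with $\tau \ge d$ forces $\omega \cdot k \notin \Z$ for $k \ne 0$, hence every Fourier coefficient vanishes.

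The only step that requires some bookkeeping is the tail estimate $\sum_{k \ne 0} |k|^\tau e^{-2\pi |k|\delta} \le C(\tau,d) \, \delta^{-\tau}$, a standard computation effected either by splitting the sum at $|k| \sim \delta^{-1}$ or by writing $|k|^\tau e^{-2\pi|k|\delta} = (2\pi|k|\delta)^\tau (2\pi\delta)^{-\tau} e^{-2\pi|k|\delta}$ and bounding $x^\tau e^{-x}$ uniformly while summing the residual geometric factor $e^{-\pi|k|\delta}$ against a convergent lattice series. This is the main --- in fact the only --- technical point; the rest of the proof is a direct application of the small-denominator Fourier machinery.
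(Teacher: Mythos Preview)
Your Fourier-series argument correctly establishes existence, uniqueness, and analyticity of the zero-average solution, and the paper itself does not give a proof here but simply cites R\"ussmann \cite{Rus75}. However, there is a genuine gap in the quantitative part: the tail estimate you claim,
\[
\sum_{k \in \Z^d\setminus\{0\}} |k|^\tau e^{-2\pi |k|\delta} \;\le\; C(\tau,d)\,\delta^{-\tau},
\]
is false. Grouping by $|k|_1 = m$ there are $\sim C_d\, m^{d-1}$ lattice points at level $m$, so the sum behaves like $\sum_{m\ge 1} m^{\tau+d-1} e^{-2\pi m\delta}\sim C\,\delta^{-(\tau+d)}$. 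Your proposed trick of writing $|k|^\tau e^{-2\pi|k|\delta}=(2\pi\delta)^{-\tau}(2\pi|k|\delta)^\tau e^{-2\pi|k|\delta}$ and summing a residual geometric factor still leaves $\sum_{k} e^{-\pi|k|\delta}\sim C_d\,\delta^{-d}$, so you again land at $\delta^{-(\tau+d)}$, not $\delta^{-\tau}$.

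The sharp exponent $\delta^{-\tau}$ stated in the lemma is precisely R\"ussmann's contribution and cannot be obtained by bounding every denominator by the worst case $|1-e^{2\pi i k\cdot\omega}|^{-1}\le (4c)^{-1}|k|^\tau$. R\"ussmann's argument exploits that, for Diophantine $\omega$, the small divisors $\mathrm{dist}(k\cdot\omega,\Z)$ cannot be simultaneously small for many $k$ at a given scale: one orders the divisors, uses a counting lemma (how many $k$ with $|k|\le N$ can have $\mathrm{dist}(k\cdot\omega,\Z)<\varepsilon$), and sums more carefully. If the weaker bound $C\delta^{-(\tau+d)}$ suffices for your purposes you may say so, but as written the lemma asserts the R\"ussmann estimate and your argument does not reach it.
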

		
		The proof with close to optimal estimates is due to Russmann \cite{Rus75}. 	We will denote by $\mathcal{SD}(h)$ the unique solution of \eqref{SDequation} with zero average.  
		
		In the next result we  obtain two pairs of maps, $\mathcal{K}_n$ and $\mathcal{R}_n$, that  are approximations of solutions $K$ and $R$ of the invariance equation 
		\begin{equation*}
			F \circ K = K \circ R.
		\end{equation*}
	The obtained approximations correspond to the stable manifold when the coefficient $\ol R_k^x(\lambda)$  of $\mathcal{R}_n$ is negative, and to the unstable manifold when such coefficient is positive.	Moreover, the obtained parameterizations of $\MK_n$ and $\MR_n$ will satisfy the hypotheses of Theorem \ref{teorema_posteriori_tors}, and therefore $\MK_n$ will be an approximation of a true invariant manifold of $F$. Moreover, the first component of $\MR_n$ coincides with the expression of the normal form of a one-dimensional map around a parabolic point (\cite{chen68, takens73}).

	\begin{theorem}[A computable appoximation for maps] \label{prop_simple_tors}
		Let $F$ be an analytic map of the form \eqref{forma_normal_tor} satisfying the hypotheses \eqref{condicions_fnormal}. Assume that $2p > k-1$, $\bar{c}(\lambda), \bar{a}_k(\lambda) > 0$ for $\lambda \in \Lambda$, and that  $\omega$ is Diophantine. Then, for all $n \geq 2$, there exist two pairs of  maps,  $\MK_n : \R \times \T^d \times \Lambda \to  \R^2 \times \T^d$ and $\mathcal{R}_n:  \R \times \T^d \times \Lambda \to  \R \times \T^d$, of the form
		$$ 
		\MK_{n} (u, \theta, \lambda)= 
		\begin{pmatrix}
			u^2 +  \sum_{i=3}^n \ol K_i^x(\lambda)  u^i + \sum_{i=k+1}^{n+k-1} \wt{ K}_{i}^x(\theta, \lambda)u^{i}  \\
			\sum_{i=k+1}^{n+k-1} \ol K_{i}^y(\lambda) u^{i} + \sum_{i=2k}^{n+2k-2}   \wt{ K}_{i}^y(\theta, \lambda)u^{i} \\
			\theta + \sum_{i=2p-k+1}^{n+2p-k-1} \ol K_{i}^\theta(\lambda) u^{i} + \sum_{i=2p}^{n+2p-2}  \wt{ K}_{i}^\theta(\theta, \lambda)u^{i} 
		\end{pmatrix}
		$$
		and 
		$$ \mathcal{R}_n(u, \theta, \lambda) =
		\begin{cases}
			\begin{pmatrix}
				u+ \ol R_k^x(\lambda) u^k \\
				\theta + \omega 
			\end{pmatrix}	 &\qquad \text{ if }  \;\; 2 \leq n \leq k, \\
			\begin{pmatrix}
				u+ \ol R_k^x(\lambda) u^k +  \ol R_{2k-1}^x(\lambda) u^{2k-1}  \\
				\theta + \omega 
			\end{pmatrix}
			& \qquad \text{ if } \;\; n\ge k+1, 
		\end{cases}
		$$
		such that 
		\begin{equation} \label{eq_prop1_tor_sim}
			\MG_n (u, \theta, \lambda) := F(\MK_{n}(u, \theta, \lambda), \lambda)- \MK_{n}(\mathcal{R}_n(u, \theta, \lambda)\lambda) = (O(u^{n+k}),\, O(u^{n+2k-1}), \, O(u^{n+2p-1})).
		\end{equation}
		Moreover, for the lowest order coefficients we have
		\begin{small}
			\begin{align}
				\label{coefcasmapes}
				& 	\ol K_{k+1}^y(\lambda) = \pm  \sqrt{\frac{2 \, \bar a_k(\lambda)}{\bar c(\lambda)\, (k+1)}}, \ \ \ol K_{2p-k+1}^\theta (\lambda) = \pm \frac{\bar d_p(\lambda)}{2p-k+1} \sqrt{\frac{2(k+1)}{\bar c (\lambda) \,  \bar a_k(\lambda)}}, \ \ \ol R_k^x (\lambda) = \pm \sqrt{\frac{\bar c(\lambda) \, \bar a_k(\lambda)}{2(k+1)}},\\
				\nonumber
				& \wt{ K}_{k+1}^x (\theta, \lambda ) = \mathcal{SD}(\tilde{c}(\theta, \lambda)) \ol{K}_{k+1}^y(\lambda), \quad  \wt{ K}_{2k}^y (\theta, \lambda ) =\mathcal{SD}(\tilde{a}_k(\theta, \lambda)), \quad  \wt{ K}_{2p}^\theta (\theta , \lambda) =\mathcal{SD}(\tilde{d}_p(\theta, \lambda)).	
			\end{align}
		\end{small}
	\end{theorem}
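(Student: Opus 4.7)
The plan is to prove the theorem by induction on $n\ge 2$, adding at every step one block of Taylor coefficients of $\MK_n$ (and at the single exceptional level $n=k+1$ also one new coefficient of $\MR_n$), so that the residual $\MG_n=F\circ\MK_n-\MK_n\circ\MR_n$ vanishes to the order stated in \eqref{eq_prop1_tor_sim}. The first task is to expand both $F\circ\MK_n$ and $\MK_n\circ\MR_n$ in Taylor series in $u$ and Fourier series in $\theta$, using also the Taylor expansions of $c,a_k,d_p,A,B$ at the origin, and to identify the coefficients of $u^{n+k-1},u^{n+2k-2},u^{n+2p-2}$ in the $x$-, $y$- and $\theta$-components respectively. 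A careful order count should show that these are the first coefficients to feel the new unknowns $\ol K^x_n,\wt K^x_{n+k-1},\ol K^y_{n+k-1},\wt K^y_{n+2k-2},\ol K^\theta_{n+2p-k-1},\wt K^\theta_{n+2p-2}$, that these unknowns enter linearly, and that all other contributions only involve coefficients determined at previous inductive steps. The hypothesis $2p>k-1$ enters precisely here, to make the three sequences of orders interlace so that unknowns from later levels do not leak into the current one.

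For the base case $n=2$, I would take the average in $\theta$ of the three coefficient equations and obtain the algebraic system
$$
\bar c\,\ol K^y_{k+1}=2\bar R^x_k,\quad \bar a_k=(k+1)\bar R^x_k\,\ol K^y_{k+1},\quad \bar d_p=(2p-k+1)\bar R^x_k\,\ol K^\theta_{2p-k+1}.
$$
Using $\bar a_k,\bar c>0$, this determines $\ol K^y_{k+1},\bar R^x_k,\ol K^\theta_{2p-k+1}$ up to a common $\pm$ sign and recovers exactly the formulas \eqref{coefcasmapes}; the branch $\bar R^x_k<0$ selects the stable manifold. The oscillatory parts of the same equations are small divisor equations with zero-average right-hand sides, and Lemma \ref{SDlemma_maps} (applicable since $\omega$ is Diophantine) gives the unique zero-mean solutions $\wt K^x_{k+1}=\mathcal{SD}(\tilde c)\ol K^y_{k+1}$, $\wt K^y_{2k}=\mathcal{SD}(\tilde a_k)$, $\wt K^\theta_{2p}=\mathcal{SD}(\tilde d_p)$.

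For the inductive step I expect the averaged part of the level-$n$ equations to take the form
\begin{align*}
-n\bar R^x_k\,\ol K^x_n + \bar c\,\ol K^y_{n+k-1} &= -\bar E^x(\lambda),\\
k\bar a_k\,\ol K^x_n - (n+k-1)\bar R^x_k\,\ol K^y_{n+k-1} &= -\bar E^y(\lambda),\\
p\bar d_p\,\ol K^x_n - (n+2p-k-1)\bar R^x_k\,\ol K^\theta_{n+2p-k-1} &= -\bar E^\theta(\lambda),
\end{align*}
with $\bar E^x,\bar E^y,\bar E^\theta$ depending only on previously determined data. A short computation using $(\bar R^x_k)^2=\bar c\bar a_k/[2(k+1)]$ shows that the determinant of this $3\times 3$ system is a nonzero multiple of $(n-k-1)(n+2k)(n+2p-k-1)$, so that for every $n\ne k+1$ the three new averaged unknowns are uniquely determined; the oscillatory counterparts of the same equations are then solved via Lemma \ref{SDlemma_maps}, with the free averages set to zero. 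At the single level $n=k+1$ the factor $n-k-1$ vanishes and the system becomes degenerate, reflecting the classical resonance that produces the $u^{2k-1}$ term in the polynomial normal form of a one-dimensional parabolic map (\cite{chen68,takens73}); the fix is to promote $\bar R^x_{2k-1}$ to a fourth unknown, which introduces an extra term $-2\bar R^x_{2k-1}-(\bar R^x_k)^2$ in the first equation and restores solvability, simultaneously fixing $\bar R^x_{2k-1},\ol K^x_{k+1},\ol K^y_{2k},\ol K^\theta_{2p}$. This is the only step at which $\MR_n$ gains a new monomial, which accounts for the case split in the statement.

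The hard part of this plan is not the conceptual structure but the bookkeeping: one must verify at every inductive step that the known quantities $\bar E^x,\bar E^y,\bar E^\theta$ and their oscillatory counterparts really involve only coefficients fixed at strictly earlier levels, which requires tracking simultaneously the expansions of $c(K^\theta),a_k(K^\theta),d_p(K^\theta)$, the powers $(K^x)^k,(K^x)^p$, the remainders $A,B$, and the composition of $\MK_n$ with the polynomial $\MR_n$. Once this careful order count is in place, the linear algebra for the averaged system and the small divisors Lemma \ref{SDlemma_maps} close the induction and yield the explicit formulas \eqref{coefcasmapes} for the lowest-order coefficients.
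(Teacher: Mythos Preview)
Your proposal is correct and follows essentially the same approach as the paper's proof: induction on $n$, splitting each cohomological equation into an averaged linear system plus small divisors equations, and handling the single resonant level by introducing $\ol R^x_{2k-1}$ as an extra unknown. The only difference is an index shift (your ``level $n$'' is the paper's ``step from $n-1$ to $n$''), so your exceptional level $n=k+1$ matches the paper's $n=k$, and your $3\times 3$ matrix with diagonal entries $-n\ol R^x_k,\,-(n+k-1)\ol R^x_k,\,-(n+2p-k-1)\ol R^x_k$ is exactly the paper's matrix \eqref{sist_lineal_tors_sim} after this shift.
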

	
	\vspace{1pt}
\begin{remark} \label{rem:KnRnsonpolis}
Although $\MK_{n} $ and $\mathcal R_n$ are polynomials in $u$ and therefore are defined for all $u\in \R $, we only consider them for $u\ge 0$, so that choosing the sign $-$ in   \eqref{coefcasmapes} we get an approximation of the stable manifold  and choosing the sign $+$ we get the unstable one.
\end{remark}
	
	\begin{notation} Along the proof, given a  map $f(u, \theta)$, we will denote by $[f]_n$ the coefficient of the term of order $n$ of the jet of $f$ with respect to $u$ at $0$ . 
	\end{notation}

\vspace{-12pt}

		\begin{proof} We prove the result by induction on  $n$ showing that we can determine $\MK_n$ and $\mathcal{R}_n$ iteratively. 
		
		For the first induction step, $n=2$, we claim that there exist maps of the form
		$$
		\MK_{2}(u, \theta) =
		\begin{pmatrix}
			u^2 + \wt{ K}_{k+1}^x(\theta) u^{k+1} \\
			\ol{K}_{k+1}^y u^{k+1} + \wt{ K}_{2k}^y(\theta) u^{2k} \\
			\theta + \ol K_{2p-k+1}^\theta u^{2p-k+1} + \wt{ K}^\theta_{2p} (\theta) u^{2p}
		\end{pmatrix}, \qquad \MR_2(t, \theta) = 
		\begin{pmatrix}
			u + \ol R_k^x u^k \\
			\theta + \omega 
		\end{pmatrix},
		$$
		such that $\MG_2 (u, \theta) = F(\MK_{2}(u, \theta))- \MK_{2}(\mathcal{R}_2(u, \theta)) = (O(u^{k+2}),  O(u^{2k+1}), O(u^{2p+1}))$.
		
		Indeed, from the expansion of $\MG_2$, since $2p > k-1$ we have
		\begin{align*}
			& \MG_2^x (u, \theta) = u^{k+1}[\wt{ K}_{k+1}^x(\theta) - \wt{ K}_{k+1}^x(\theta + \omega) + c(\theta)\ol{K}_{k+1}^y - 2 \ol{R}_k^x] + O(u^{k+2}), \\
			& \MG_2^y (u, \theta) = u^{2k}[\wt{ K}_{2k}^y (\theta) - \wt{ K}_{2k}^y (\theta + \omega) +  a_k(\theta) - (k+1) \ol{K}_{k+1}^y \ol{R}_k^x] + O(u^{2k+1}), \\
			& \MG_2^\theta (u, \theta) = u^{2p}[\wt{ K}_{2p}^\theta (\theta) -\wt{ K}_{2p}^\theta (\theta + \omega) + d_p(\theta) -  (2p-k+1) 	\ol K_{2p-k+1}^\theta \ol R_k^x ] + O(u^{2p+1}). 
		\end{align*}
		To obtain $\MG_2^x (u, \theta) = O(u^{k+2})$ we have to solve the equation
		$$ \wt{ K}_{k+1}^x(\theta) - \wt{ K}_{k+1}^x(\theta + \omega) + c(\theta)\ol{K}_{k+1}^y - 2 \ol{R}_k^x = 0.
		$$ 
		We proceed as follows. We separate the average and the oscillatory part of the functions that depend on $\theta$ 
		and we split the equation into two parts, one containing the terms that are independent of $\theta$, namely $ \bar c \ol{K}_{k+1}^y =  2 \ol{R}_k^x$, and the other being a small divisors equation of functions with zero average, $  \wt{ K}_{k+1}^x(\theta + \omega) - \wt{ K}_{k+1}^x(\theta) =  \tilde{ c}(\theta)\ol{K}_{k+1}^y$.
		
		We proceed in the same way to get $\MG_2^y (u, \theta) = O(u^{2k+1})$ and $\MG_2^\theta (u, \theta) = O(u^{2p+1})$. Since $\bar{c}, \bar{a}_k >0$ and $\omega$ is Diophantine, the obtained equations have the solutions $\ol K_{k+1}^y, \, \ol K_{2p-k+1}^\theta, \,  \ol R_k^x$, $\wt{ K}_{k+1}^x (\theta ), \, \wt{K}_{2k}^y (\theta )$ and $\wt{ K}_{2p}^\theta (\theta )$ given in the statement.

		Next we perform the induction procedure. We assume that we have already obtained maps $\MK_n$ and $\mathcal{R}_n$, $n \geq 2$, such that \eqref{eq_prop1_tor_sim} holds true, and we look for 
		\begin{align*}
		\MK_{n+1} (u, \theta) &= \MK_n(u, \theta) + \begin{pmatrix} \ol K_{n+1}^x u^{n+1} + \wt{ K}_{n+k}^x(\theta) u^{n+k} \\ \ol K_{n+k}^y \, u^{n+k} + \wt{ K}_{n+2k-1}^y(\theta) u^{n+2k-1}  \\
				\ol K_{n+2p-k}^\theta u^{n+2p-k}	+ \wt{ K}_{n+2p-1}^\theta(\theta) u^{n+2p-1}
			\end{pmatrix},  \\
			\mathcal{R}_{n+1}(u, \theta) &= \mathcal{R}_n(u, \theta) +
			\begin{pmatrix}
				\ol R^x_{n+k-1} \, u^{n+k-1} \\
				0
			\end{pmatrix},
		\end{align*}
		such that  $\MG_{n+1}(u, \theta) = (O(u^{n+k+1}), \, O(u^{n+2k}), \, O(u^{n+2p}))$. To simplify the notation, we denote $\MK_{n+1}^+ = \MK_{n+1} - \MK_n$ and $\MR_{n+1}^+ = \MR_{n+1} - \MR_n$.

		Using Taylor's theorem, we write
		\begin{align*}
			\MG_{n+1}(u, \theta)  &= F(\MK_n(u, \theta) + \MK_{n+1}^+ (u, \theta) )  - (\MK_n(u, \theta ) + \MK_{n+1}^+ (u, \theta))\circ (\mathcal{R}_n(u, \theta) + \MR_{n+1}^+(u, \theta))  \\
			& = \MG_n(u, \theta) + DF(K_n(u, \theta )) \cdot \MK_{n+1}^+ (u, \theta) - \MK_{n+1}^+ (u, \theta) \circ (\mathcal{R}_n(u, \theta) + \MR_{n+1}^+ (u, \theta)  ) \\
			& \quad+ \int_0^1 \hspace{-1pt} (1-s)  D^2F(\MK_n(u, \theta) + s \, \MK_{n+1}^+ (u, \theta) ) \, ds \, \MK_{n+1}^+ (u, \theta)^{\otimes 2}  \\
			&	\quad  - D\MK_n \circ \mathcal{R}_n(u, \theta ) \cdot  \MR_{n+1}^+(u, \theta)  \\
			& \quad - \int_0^1 \hspace{-1pt} (1-s)  D^2\MK_n (\mathcal{R}_n(u, \theta) + s  \, \MR_{n+1}^+(u, \theta) ) \, ds \, \MR_{n+1}^+(u, \theta)^{\otimes 2}.
		\end{align*}
Expanding the components of the previous expression we have
		\begin{align}
			\begin{split} \label{eqs_pasn_alg_sim}
				& \MG_{n+1}^x (u, \theta)  = \MG_n^x (u, \theta) \\
				& \quad + u^{n+k}[\wt{ K}_{n+k}^x(\theta) - \wt{ K}_{n+k}^x(\theta + \omega) + c(\theta)\ol{K}_{n+k}^y - (n+1) \ol K_{n+1}^x \ol{R}_k^x  - 2 \ol R_{n+k-1}^x] + O(u^{n+k+1}), \\
				& \MG_{n+1}^y (u, \theta)  = \MG_n^y (u, \theta)  \\
				&\quad + u^{n+2k-1}[\wt{ K}_{n+2k-1}^y (\theta) - \wt{ K}_{n+2-1}^y (\theta + \omega) +  k \, a_k(\theta) \ol K_{n+1}^x - (n+k) \ol{K}_{n+k}^y \ol{R}_k^x \\
				& \quad - (k+1) \ol K_{k+1}^y \ol R_{n+k-1}^x  ] + O(u^{n+2k}), \\
				& \MG_{n+1}^\theta (u, \theta)  = \MG_n^\theta (u, \theta) \\
				& \quad + u^{n+2p-1}[\wt{ K}_{n+2p-1}^\theta (\theta) -\wt{ K}_{n+2p-1}^\theta (\theta + \omega) + p \, d_p(\theta) \ol K_{n+1}^x \\
				& \quad - (n+2p-k) \ol K_{n+2p-k}^\theta \ol R_k^x - (2p-k+1) \ol K_{2p-k+1}^\theta \ol R_{n+k-1}] + O(u^{n+2p}).
			\end{split} 
		\end{align}
		Since, by the induction hypothesis,
		$\MG_{n}(u, \theta) = (O(u^{n+k}), \, O(u^{n+2k-1}), \, O(u^{n+2p-1}))$,  to complete the induction step we need to make $[\MG_{n+1}^x]_{n+k}$, $[\MG_{n+1}^y]_{n+2k-1}$ and $[\MG_{n+1}^\theta]_{n+2p-1}$  vanish. From  \eqref{eqs_pasn_alg_sim}, such conditions lead to the following cohomological equations,  
		\begin{align}
			\begin{split} \label{6eqs_cohom_n_sim}
				&  \wt{ K}_{n+k}^x(\theta) - \wt{ K}_{n+k}^x(\theta + \omega) + c(\theta)\ol{K}_{n+k}^y - (n+1) \ol K_{n+1}^x \ol{R}_k^x  - 2 \ol R_{n+k-1}^x + [\MG_{n}^x(\theta)]_{n+k} =0, \\
				&   \wt{ K}_{n+2k-1}^y (\theta) - \wt{ K}_{n+2-1}^y (\theta + \omega) +  k \, a_k(\theta) \ol K_{n+1}^x - (n+k) \ol{K}_{n+k}^y \ol{R}_k^x  \\
				& \qquad \qquad \qquad \qquad \qquad \qquad \qquad \qquad \qquad \ \ \, \qquad   - (k+1) \ol K_{k+1}^y \ol R_{n+k-1}^x  + [\MG_n^y(\theta)]_{n+2k-1} = 0, \\
				&  \wt{ K}_{n+2p-1}^\theta (\theta) -\wt{ K}_{n+2p-1}^\theta (\theta + \omega) + p \, d_p(\theta) \ol K_{n+1}^x  \\
				&  \qquad \qquad \qquad - (n+2p-k) \ol K_{n+2p-k}^\theta \ol R_k^x  - (2p-k+1) \ol K_{2p-k+1}^\theta \ol R_{n+k-1}+ [\MG_n^\theta (\theta)]_{n+2p-1} = 0.
			\end{split}
		\end{align}
Taking averages with respect to $\theta$ in the previous equations and separating the terms that depend on $\theta$ from the constant ones, we split \eqref{6eqs_cohom_n_sim}  into three small divisors equations of functions with zero average, namely, 
		\begin{align} \label{SDequations_induccio_sim}
			\begin{split}	
				&   \wt{ K}_{n+k}^x(\theta + \omega) - \wt{ K}_{n+k}^x(\theta) =  \tilde c(\theta)  \ol K_{n+k}^y + [\wt\MG_{n}^x(\theta)]_{n+k}, \\
				&   \wt{ K}_{n+2-1k}^y (\theta + \omega) - \wt{ K}_{n+2k-1}^y (\theta) =   k \, \tilde a_k(\theta) \ol K_{n+1}^x  + [\wt \MG_n^y(\theta)]_{n+2k-1}, \\
				&  \wt{ K}_{n+2p-1}^\theta (\theta + \omega) - \wt{ K}_{n+2p-1}^\theta (\theta) =   p \, \tilde d_p(\theta) \ol K_{n+1}^x + [\wt \MG_n^\theta (\theta)]_{n+2p-1},
			\end{split}
		\end{align}
		and the following linear system of equations with constant coefficients,  
		\begin{align}
			\begin{split}
				\label{sist_lineal_tors_sim}
				& \begin{pmatrix}
					-(n+1) \ol R_k^x & \bar c & 0  \\
					k \, \bar a_k & -(n+k) \, \ol R_k^x  & 0 \\
					p\, \bar d_p & 0 & -(n+2p-k) \ol R_k^x
				\end{pmatrix} \hspace{-2pt}
				\begin{pmatrix}
					\ol K_{n+1}^x \\
					\ol K_{n+k}^y \\
					\ol K_{n+2p-k}^\theta
				\end{pmatrix} \\
				& \qquad = 
				\begin{pmatrix}
					- [\ol \MG_n^x]_{n+k} + 2 \ol R_{n+k-1}^x \\
					- [\ol \MG_n^y]_{n+2k-1}  + (k+1)  \ol K_{k+1}^y \ol R_{n+k-1}^x \\
					- [\ol \MG_n^\theta ]_{n+2p-1} + (2p-k+1) \ol K_{2p-k+1}^\theta \ol R_{n+k-1}^x
				\end{pmatrix}.
			\end{split}
		\end{align}	
Note that the determinant of the matrix in the left hand side of \eqref{sist_lineal_tors_sim} is 
$$
 (n+2p-k) \ol R_k^x \,[  k  \, \bar c  \, \bar a_k -  (n+1)(n+k) (\ol R_k^x)^2] 
 = (n+2p-k) \ol R_k^x \,  \, \bar c  \, \bar a_k
 \frac{(k-n)(n+2k+1)}{2(k+1)},
$$ 
		 which vanishes when $k  \, \bar c  \, \bar a_k - (n+1)(n+k) (\ol R_k^x)^2 = 0$.  Since by hypothesis $n+2p-k \geq 1$, if $n \neq k$ the matrix is invertible, so we can take $\ol R_{n+k-1}^x = 0$ and  then obtain $\ol K_{n+1}^x$, $\ol K_{n+k}^y$ and $\ol  K_{n+2p-k}^\theta$ in a unique way. When $n=k$ the determinant of the matrix is zero. Then, choosing 
		$$
		\ol R_{2k-1}^x =  \frac{2k \, \ol R_k^x \, [\ol \MG_n^x]_{2k} + \bar c \, [\MG_n^y]_{3k-1}}{2 \, (3k+1) \, \ol R_k^x},
		$$
		system \eqref{sist_lineal_tors_sim} has solutions. In this case, however, $\ol K_{k+1}^x$,  $\ol K_{2k}^y$ and $\ol  K_{2p}^\theta$ are not uniquely determined.
		
		Once we have chosen solutions $\ol K_{n+1}^x$,  $\ol K_{n+k}^y$ and $\ol  K_{n+2p-k}^\theta$ of system \eqref{sist_lineal_tors_sim}, we solve the small divisors equations in  \eqref{SDequations_induccio_sim} taking 
		\begin{align*}
			& \wt{ K}_{n+k}^x(\theta) = \mathcal{SD} (  \tilde c(\theta) \ol K_{n+k}^y + [\wt \MG_{n}^x(\theta)]_{n+k}), \\
			& \wt{ K}_{n+2k-1}^y (\theta)  = \mathcal{SD} (  k \, \tilde a_k(\theta) \ol K_{n+1}^x  + [\wt \MG_n^y(\theta)]_{n+2k-1}), \\
			& \wt{ K}_{n+2p-1}^\theta (\theta)   = \mathcal{SD}  (  p \, \tilde d_p(\theta) \ol K_{n+1}^x + [\wt \MG_n^\theta (\theta)]_{n+2p-1}).
		\end{align*}
		In this way all equations in \eqref{6eqs_cohom_n_sim} are solved and one can  proceed to the next induction step. 
	\end{proof}

\subsection{The case of vector fields} \label{sec_algoritme_tors_edo}
	
In this case we have an analogous results. 
We denote $\mathbb{H}_\sigma = \{ z \in \C \, | \ |\text{Im}(z)|  <  \sigma\}$ the complex strip of thickness $2\sigma >0$.

	We say that $\omega \in \R^d$ is \emph{Diophantine (in the vector field setting)}  if there exist $c > 0$ and $\tau \geq d-1$ such that
	$$
	|\omega \cdot k| \geq c |k|^{-\tau} \qquad \text{for all} \qquad k \in \Z^d \backslash \{ 0 \}.
	$$
	The small divisors equation in the vector field setting is 
	\begin{equation} \label{SDequation_edo}
		\partial_\theta \varphi(\theta, \lambda) \cdot \omega = h(\theta, \lambda),
	\end{equation}
	with $h : \T^d \times \Lambda \to \R^n$ and $\omega \in \R^d$. 
	
Similarly to the case of maps, if $h$ has zero average and $k \cdot \omega \notin \Z$ for all $k \neq 0$, then equation \eqref{SDequation_edo} has the formal solution 
	$$
	\varphi (\theta, \lambda ) = \sum_{k \in \Z^d} \varphi_k (\lambda) e^{2 \pi i k \cdot \theta}, \qquad \varphi_k (\lambda)  = \frac{h_k (\lambda)}{2 \pi i k \cdot \omega}, \qquad k \neq 0,
	$$
	where all the coefficients $\varphi_k$ are uniquely determined except $\varphi_0$ which is free.

	\begin{lemma}[Small divisors lemma for vector fields] \label{SDlemma_edo}
		Let $h: \T^d_\sigma \times \Lambda_\C \to \C^n$ be an analytic function such  that $\sup_{(\theta, \lambda) \in \T^d_{\sigma}\times \Lambda_\C} \|h(\theta, \lambda)\| \, < \infty$ and having zero average. Let $\omega$ be Diophantine with $\tau  \geq d -1$. Then, there exists a unique analytic solution $\varphi : \T^d_\sigma \times \Lambda_\C \to \C^n$ of \eqref{SDequation_edo} with zero average. Moreover, 
		$$
		\sup_{(\theta, \lambda) \in \T^d_{\sigma-\delta}\times \Lambda_\C} \|\varphi(\theta, \lambda)\| \, \leq \, C \delta^{-\tau} \sup_{(\theta, \lambda) \in \T^d_\sigma \times \Lambda_\C} \,  \|h(\theta, \lambda)\|, \qquad 0 < \delta < \sigma,
		$$
		where $C$ depends on $\tau$ and $d$ but not on $\delta$.
	\end{lemma}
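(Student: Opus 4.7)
The plan is to solve \eqref{SDequation_edo} by Fourier series in $\theta$, exactly as sketched before the statement. Expanding $h(\theta,\lambda)=\sum_{k\in\Z^d} h_k(\lambda)e^{2\pi i k\cdot\theta}$ and $\varphi(\theta,\lambda)=\sum_k \varphi_k(\lambda)e^{2\pi i k\cdot\theta}$ and matching coefficients forces $2\pi i(k\cdot\omega)\varphi_k(\lambda) = h_k(\lambda)$. Since $h$ has zero average, $h_0\equiv 0$, so I can set $\varphi_0=0$ (also forced by the zero-average normalization of $\varphi$), and for $k\ne 0$ the Diophantine condition guarantees $k\cdot\omega\ne 0$, giving the unique choice $\varphi_k(\lambda) = h_k(\lambda)/(2\pi i\, k\cdot\omega)$. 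Uniqueness of the analytic solution with zero average is then automatic, since any two such solutions share the same Fourier coefficients.

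Next I would establish pointwise bounds on the $h_k$. Because $h$ is analytic and bounded on $\T^d_\sigma\times\Lambda_\C$, shifting the contour in the integral defining $h_k$ into the strip of imaginary part of size $\sigma$ gives
$$
\|h_k(\lambda)\| \le M \, e^{-2\pi\sigma|k|}, \qquad M:=\sup_{\T^d_\sigma\times\Lambda_\C}\|h\|,
$$
uniformly in $\lambda\in\Lambda_\C$. Combining with $|k\cdot\omega|\ge c|k|^{-\tau}$ yields
$$
\|\varphi_k(\lambda)\| \le \frac{M}{2\pi c}\, |k|^{\tau}\, e^{-2\pi\sigma|k|}.
$$
Analyticity of the resulting $\varphi$ on $\T^d_{\sigma-\delta}\times\Lambda_\C$ then follows from normal convergence of the Fourier series, together with the observation that each $h_k(\lambda)$ inherits analyticity in $\lambda$ from $h$ while the denominators $2\pi i\,k\cdot\omega$ are constants.

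The main technical point is the sharp decay $C\delta^{-\tau}$ in the final estimate. A naive term-by-term bound on $\T^d_{\sigma-\delta}$ gives
$$
\|\varphi(\theta,\lambda)\| \le \frac{M}{2\pi c}\sum_{k\ne 0}|k|^{\tau}\,e^{-2\pi\delta|k|},
$$
and the integral comparison $\sum_{k\ne 0}|k|^{\tau}e^{-2\pi\delta|k|}\le C_{d,\tau}\,\delta^{-(\tau+d)}$ only produces the exponent $-(\tau+d)$, losing a spurious $\delta^{-d}$ factor coming from the lattice volume counting. To recover the Russmann exponent $-\tau$ I would invoke his refined summation technique \cite{Rus75}: decompose $\Z^d\setminus\{0\}$ into dyadic shells $\{2^{j-1}\le|k|<2^j\}$ and, within each shell, combine the Diophantine condition with a counting argument that controls the number of lattice points for which $|k\cdot\omega|$ is close to its minimal allowed value. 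The trade-off between the size of the small divisors and the multiplicity of near-resonances is precisely what removes the extra $\delta^{-d}$ and delivers the stated $C=C(\tau,d)$ independent of $\delta$. This is the only delicate step; the remainder of the proof is the same verbatim argument as for Lemma \ref{SDlemma_maps}, with the obvious substitution of $1-e^{2\pi i k\cdot\omega}$ by $2\pi i\,k\cdot\omega$ in the denominators.
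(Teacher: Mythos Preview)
The paper does not actually prove this lemma: just as for the maps version (Lemma~\ref{SDlemma_maps}), it states the result and attributes the proof with close-to-optimal estimates to R\"ussmann~\cite{Rus75}. Your sketch is precisely the standard argument behind that reference---Fourier expansion, exponential decay of the coefficients from analyticity, division by $2\pi i\,k\cdot\omega$, and then R\"ussmann's refined summation to recover the sharp exponent $-\tau$ instead of the naive $-(\tau+d)$---so there is nothing to compare: you have supplied the proof the paper chose to cite rather than reproduce.
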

	
	As in the case of maps we denote by $\mathcal{SD}(h)$ the unique solution of \eqref{SDequation_edo} with zero average.

	As a consequence, if $h: \T^d_\sigma \times \MAH_\sigma \times \Lambda_\C \to \C^n$ is quasiperiodic with respect to  $t\in \MAH$ with frequencies $\nu \in \R^{d'}$, $h$ has zero average and $(\omega, \nu) \in \R^{d + d'}$ is Diophantine, then the equation
	\begin{equation} \label{SDeq_time}
		(\partial_\theta \varphi(\theta, t, \lambda) , \partial_t \varphi(\theta, t, \lambda))\cdot (\omega, 1) = h(\theta, t, \lambda), \qquad (\theta, t, \lambda) \in \T^d_\sigma \times \MAH_\sigma \times \Lambda_\C,
	\end{equation}
	has a unique solution with zero average, defined in $\T^d_\sigma \times \MAH_\sigma \times \Lambda_\C$ and bounded in $\T^d_{\sigma'} \times \MAH_{\sigma'} \times \Lambda_\C$ for any $0 < \sigma' < \sigma$. Indeed, since $h$ is quasiperiodic in $t$, equation \eqref{SDeq_time} is equivalent to 
	\begin{equation} \label{SD_time_hat}
		(\partial_\theta \check \varphi(\theta, \tau, \lambda), \partial_\tau \check \varphi (\theta, \tau, \lambda)) \cdot (\omega, \nu) = \check h(\theta, \tau, \lambda), \qquad (\theta, \tau, \lambda) \in \T^{d+d'}_\sigma \times \Lambda_\C,
	\end{equation}
	where $\tau = \nu t$ and $h(\theta, t, \lambda) = \check h (\theta, \tau,\lambda)$. Then, applying Theorem \ref{SDlemma_edo} to equation \eqref{SD_time_hat} taking $(\omega, \nu)$ as the frequency vector, we obtain a unique solution $\check \varphi : \T^{d+d'}_{\sigma'}\times \Lambda_\C \to \C^n$ with zero average, and thus $\varphi(\theta, t, \lambda) = \check \varphi(\theta, \tau, \lambda)$ is the unique solution of equation \eqref{SDeq_time} with zero average. We also denote it by $\mathcal{SD}(h)$. We use the same notation to denote the solution of a small divisors equation that is either time dependent or independent, as such dependence will be understood by the context.  
	
	In the next result, given an analytic vector field $X$ of the form \eqref{fnormal_tors_edo1}, we obtain two maps, $\mathcal{K}_n (u, \theta, t, \lambda)$ and two vector fields,  $\mathcal{Y}_n (u, \theta, t, \lambda)$, that  are approximations of solutions $K$ and $Y$ of the invariance equation 
	\begin{equation} \label{conj_tors_edo}
		X \circ (K,t) - \partial_{(u, \theta )} K \cdot Y - \partial_t K = 0,
	\end{equation}
Note that the obtained vector field $\MY_n$ neither depends on $\theta$ nor on $t$. Moreover, the first component of $\MY_n$, which represents the dynamics in a transversal directions to the invariant torus, coincides with the expression of the normal form of a one-dimensional vector field around a parabolic point given in (\cite{takens73}).
	
	\begin{theorem}[A computable approximation for vector fields] \label{prop_tors_edo_sim}
		Let $X$ be an analytic vector field of the form \eqref{fnormal_tors_edo1} satisying the hypotheses \eqref{condicions_fnormal}.  Assume that $2p> k-1$. Assume also that $(\omega, \nu)$ is Diophantine and  $\bar{a}_k (\lambda) > 0$ for $\lambda \in \Lambda$. Then, for all $n \geq 2$, there exist two maps,  $\MK_n : \R \times \T^d \times \R  \times \Lambda \to  \R^2 \times \T^d$, of the form 
		$$ 
		\MK_{n} (u, \theta, t,  \lambda)= 
		\begin{pmatrix}
			u^2 +  \sum_{i=3}^n \ol K_i^x( \lambda)  u^i + \sum_{i=k+1}^{n+k-1} \wt{ K}_{i}^x(\theta, t, \lambda)u^{i}  \\
			\sum_{i=k+1}^{n+k-1} \ol K_{i}^y(\lambda) u^{i} + \sum_{i=2k}^{n+2k-2}   \wt{ K}_{i}^y(\theta, t, \lambda)u^{i} \\
			\theta + \sum_{i=2p-k+1}^{n+2p-k-1} \ol K_{i}^\theta(\lambda) u^{i} + \sum_{i=2p}^{n+2p-2}  \wt{ K}_{i}^\theta(\theta, t, \lambda)u^{i} 
		\end{pmatrix},
		$$
		depending quasiperiodically on time with the same frequencies as $X$, 
		and two vector fields,  $\mathcal{Y}_n:  \R \times \T^d \times \R \times \Lambda \to  \R \times \T^d$, of the form
		$$ \mathcal{Y}_n(u, \theta, t, \lambda) = \mathcal{Y}_n(u, \lambda)
		= \begin{cases}
			\begin{pmatrix}
				\ol Y_k^x(\lambda) u^k \\
				\omega 
			\end{pmatrix}	 &\qquad \text{ if }  \;\; 2 \leq n \leq k, \\
			\begin{pmatrix}
				\ol Y_k^x(\lambda) u^k + \ol Y_{2k-1}^x(\lambda) u^{2k-1}  \\
				\omega 
			\end{pmatrix}
			& \qquad \text{ if } \;\; n\ge k+1, 
		\end{cases}
		$$
		such that 
		\begin{align}
			\begin{split} \label{eq_prop1_tor_edo_sim}
				\MG_n (u,  \theta, t, \lambda) &:= X(\MK_{n}(u, \theta, t, \lambda), t, \lambda)- \partial_{(u, \theta)}\MK_{n} (u, \theta, t, \lambda) \cdot \mathcal{Y}_n(u, \theta, t, \lambda)  - \partial_t \MK_n(u, \theta, t, \lambda)\\
				& \; = (O(u^{n+k}),\, O(u^{n+2k-1}), \, O(u^{n+2p-1})).
			\end{split}
		\end{align}
		Moreover, for the lowest order coefficients we obtain
		\begin{small}
			\begin{align*}
				& 	\ol K_{k+1}^y(\lambda) = \pm  \sqrt{\frac{2 \, \bar a_k(\lambda)}{\bar c(\lambda)\, (k+1)}}, \qquad \ol K_{2p-k+1}^\theta(\lambda) = \pm \frac{\bar d_p(\lambda)}{2p-k+1} \sqrt{\frac{2(k+1)}{\bar c(\lambda) \, \bar a_k(\lambda)}}, 
				\qquad \ol Y_k^x(\lambda) = \pm \sqrt{\frac{\bar c(\lambda)\, \bar a_k(\lambda)}{2(k+1)}} ,  \\
				& \wt{ K}_{k+1}^x (\theta, t, \lambda ) = \mathcal{SD}(\tilde{ c}(\theta,t, \lambda)) \ol{K}_{k+1}^y(\lambda), \qquad  \wt{ K}_{2k}^y (\theta, t, \lambda ) =\mathcal{SD}(\tilde{a}_k(\theta,t, \lambda )), \qquad \wt{ K}_{2p}^\theta (\theta, t, \lambda ) =\mathcal{SD}(\tilde{d}_p(\theta,t, \lambda)).	
			\end{align*}
		\end{small}
		\vspace{-18pt}	
	\end{theorem}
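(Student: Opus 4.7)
The plan is to prove the theorem by induction on $n$, mirroring the proof of Theorem~\ref{prop_simple_tors} but with the shift operator $\varphi(\theta+\omega)-\varphi(\theta)$ replaced by the derivation $\partial_\theta\varphi\cdot\omega+\partial_t\varphi$ along $(\omega,\nu)$ in every cohomological equation. The small divisors equations that arise can then be inverted by the time-dependent extension of Lemma~\ref{SDlemma_edo} via the hull-function reduction to $\T^{d+d'}$ described above, which is justified because $(\omega,\nu)$ is Diophantine by hypothesis. Since $X$ is quasiperiodic in $t$ with frequencies $\nu$, the oscillatory coefficients $\wt K^{\cdot}_i(\theta,t,\lambda)$ produced by $\mathcal{SD}$ inherit the same frequencies.

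For the base case $n=2$, I substitute the ansatz
$$\MK_2=\bigl(u^2+\wt K_{k+1}^x(\theta,t)u^{k+1},\; \ol K_{k+1}^y u^{k+1}+\wt K_{2k}^y(\theta,t)u^{2k},\; \theta+\ol K_{2p-k+1}^\theta u^{2p-k+1}+\wt K_{2p}^\theta(\theta,t)u^{2p}\bigr),$$
together with $\MY_2=(\ol Y_k^x u^k,\omega)$, into $X\circ\MK_2-\partial_{(u,\theta)}\MK_2\cdot\MY_2-\partial_t\MK_2$, and read off the coefficients of $u^{k+1}$, $u^{2k}$ and $u^{2p}$ in the $x$, $y$ and $\theta$ components (the assumption $2p>k-1$ ensures that no lower-order obstruction arises in the $\theta$-component). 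Splitting each resulting equation into its average on $\T^d\times\T^{d'}$ and its oscillatory part produces the algebraic system
$$\bar c\,\ol K_{k+1}^y=2\,\ol Y_k^x,\qquad \bar a_k=(k+1)\,\ol K_{k+1}^y\,\ol Y_k^x,\qquad \bar d_p=(2p-k+1)\,\ol K_{2p-k+1}^\theta\,\ol Y_k^x,$$
whose solutions (with a free sign choice) give the coefficients listed in the statement, plus three small divisors equations for the unknowns $\wt K_{k+1}^x,\wt K_{2k}^y,\wt K_{2p}^\theta$, each solved in the zero-average class by $\mathcal{SD}$.

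For the inductive step, assuming that $\MK_n,\MY_n$ satisfy \eqref{eq_prop1_tor_edo_sim}, I set $\MK_{n+1}=\MK_n+\MK_{n+1}^+$ and $\MY_{n+1}=\MY_n+\MY_{n+1}^+$, where $\MK_{n+1}^+$ collects the new unknown monomials (one constant and one oscillatory per component, of degrees $n+1,n+k$ in $x$; $n+k,n+2k-1$ in $y$; $n+2p-k,n+2p-1$ in $\theta$) and $\MY_{n+1}^+=(\ol Y_{n+k-1}^x u^{n+k-1},0)$. Expanding $X\circ\MK_{n+1}$, $\partial_{(u,\theta)}\MK_{n+1}\cdot\MY_{n+1}$ and $\partial_t\MK_{n+1}$ by Taylor's theorem around the approximation at step $n$, the equations required to vanish at the critical orders decouple: the oscillatory parts give three small divisors equations with zero-average right-hand sides (solved by $\mathcal{SD}$), and the constant parts form the $3\times 3$ linear system with matrix
$$\begin{pmatrix} -(n+1)\ol Y_k^x & \bar c & 0 \\ k\,\bar a_k & -(n+k)\ol Y_k^x & 0 \\ p\,\bar d_p & 0 & -(n+2p-k)\ol Y_k^x \end{pmatrix}$$
acting on $(\ol K_{n+1}^x,\ol K_{n+k}^y,\ol K_{n+2p-k}^\theta)^{\top}$, with right-hand side built from averages of $\MG_n$ and the unknown $\ol Y_{n+k-1}^x$.

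The main obstacle is the degeneracy of this matrix: using $(\ol Y_k^x)^2=\bar c\,\bar a_k/(2(k+1))$, its determinant factors as a nonzero scalar times $(k-n)(n+2k+1)$, so it is invertible precisely when $n\neq k$. In that case I set $\ol Y_{n+k-1}^x=0$ and the three constant unknowns are determined uniquely. In the resonant case $n=k$ the matrix has rank two; the value of $\ol Y_{2k-1}^x$ is fixed by the explicit formula analogous to the one for $\ol R_{2k-1}^x$ in the proof of Theorem~\ref{prop_simple_tors} (with $\ol R_k^x$ replaced by $\ol Y_k^x$), which places the right-hand side in the range of the matrix; the three scalar unknowns are then free and may be set to zero, and the accompanying small divisors equations are solved by $\mathcal{SD}$, completing the induction.
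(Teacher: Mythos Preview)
Your proposal is correct and follows essentially the same approach as the paper's own proof: induction on $n$, with the shift operator of the map case replaced by the derivation $\partial_\theta\varphi\cdot\omega+\partial_t\varphi$, leading to the same averaged $3\times 3$ linear system (with $\ol Y_k^x$ in place of $\ol R_k^x$) and the same time-dependent small divisors equations solved via the hull-function reduction. One minor correction: at the resonant step $n=k$ the coefficient matrix has rank two, not zero, so after fixing $\ol Y_{2k-1}^x$ to place the right-hand side in its range there is only a one-parameter family of solutions---you must pick a particular solution of the rank-two system rather than set all three scalar unknowns to zero.
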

A remark analogous to Remark \ref{rem:KnRnsonpolis} also applies here.

	\begin{proof} The proof is analogous to the one of Theorem \ref{prop_simple_tors}, but in this case we look for parameterizations $\MK_n$ and $\MY_n$ that approximate solutions of equation \eqref{conj_tors_edo}.
		
		For the first induction step, $n=2$, we claim that there exist a map and a vector field,
		$$
		\MK_{2}(u, \theta, t) =
		\begin{pmatrix}
			u^2 + \wt{ K}_{k+1}^x(\theta, t) u^{k+1} \\
			\ol{K}_{k+1}^y u^{k+1} + \wt{ K}_{2k}^y(\theta, t) u^{2k} \\
			\theta +  \ol K_{2p-k+1}^\theta u^{2p-k+1} + \wt{ K}^\theta_{2p} (\theta, t) u^{2p}
		\end{pmatrix}, \qquad \MY_2(u, \theta, t) = 
		\begin{pmatrix}
			\ol Y_k^x u^k \\
			\omega 
		\end{pmatrix},
		$$
		such that 
		\begin{align} \begin{split} \label{expans_vf}
			\MG_2 (u, \theta, t) &= X(\MK_{2}(u, \theta, t), t)- \partial_{(u, \theta)}\MK_{2}(u, \theta, t) \cdot \mathcal{Y}_2(u, \theta, t) - \partial_t \MK_2 (u, \theta, t) \\
			&= (O(u^{k+2}),  O(u^{2k+1}), O(u^{2p+1})).
	\end{split}	\end{align}
This leads to a set of $d+2$ cohomological equations, that we split into two parts, one containing the terms that are independent of $(\theta, t)$,  and the other being a small divisors equation for functions of $(\theta, t)$ with zero average.
	
	Then,  since $\bar{c},\bar{a}_k >0$ and $(\omega, \nu)$ is Diophantine, by the small divisors lemma the equations obtained from \eqref{expans_vf} have solutions $	\ol K_{k+1}^y, \, \ol K_{2p-k+1}^\theta , \, \ol Y_k^x$, $\wt{ K}_{k+1}^x (\theta, t ), \, \wt{ K}_{2k}^y (\theta, t )$ and $\wt{ K}_{2p}^\theta (\theta, t )$ as given in the statement. We emphasize that we obtain two solutions. The next terms will depend on the choice we make for  those solutions.

		In the induction procedure we look for 
		\begin{align*}
			\MK_{n+1} (u, \theta, t) &= \MK_n(u, \theta, t) + \begin{pmatrix} \ol K_{n+1}^x u^{n+1} + \wt{ K}_{n+k}^x(\theta, t) u^{n+k} \\ \ol K_{n+k}^y \, u^{n+k} + \wt{ K}_{n+2k-1}^y(\theta, t) u^{n+2k-1}  \\
				\ol K_{n+2p-k}^\theta u^{n+2p-k}	\wt{ K}_{n+2p-1}^\theta(\theta, t) u^{n+2p-1}
			\end{pmatrix},  \\
			\mathcal{Y}_{n+1}(u, \theta, t) &= \mathcal{Y}_n(u, \theta, t) +
			\begin{pmatrix}
				\ol Y^x_{n+k-1} \, u^{n+k-1} \\
				0
			\end{pmatrix},
		\end{align*}
		such that  $\MG_{n+1}(u, \theta,t) = (O(u^{n+k+1}), \, O(u^{n+2k}), \, O(u^{n+2p}))$. 
		
	Proceeding in the same way as in the case of maps we arrive to the following completely analogous equations for the average and the oscillatory parts of the coefficients of $\MK_n$ and $\MY_n$,
		\begin{align} \label{SDequations_induccio_edo_sim}
			\begin{split}	
				& \partial_\theta  \wt{ K}_{n+k}^x(\theta , t) \cdot \omega + \partial_t \wt{ K}_{n+k}^x(\theta, t) =  \tilde c(\theta, t)  \ol K_{n+k}^y + [\wt \MG_{n}^x(\theta, t)]_{n+k}, \\
				&  \partial_\theta \wt{ K}_{n+2-1k}^y (\theta , t)\cdot \omega + \partial_t \wt{ K}_{n+2k-1}^y (\theta, t) =   k \, \tilde a_k(\theta, t) \ol K_{n+1}^x  + [\wt \MG_n^y(\theta, t)]_{n+2k-1}, \\
				&  \partial_\theta \wt{ K}_{n+2p-1}^\theta (\theta , t)\cdot \omega +\partial_t \wt{ K}_{n+2p-1}^\theta (\theta, t) =   p \, \tilde d_p(\theta, t) \ol K_{n+1}^x + [\wt \MG_n^\theta (\theta, t)]_{n+2p-1},
			\end{split}
		\end{align}
		and 
		\begin{align}
			\begin{split}
				\label{sist_lineal_tors_edo_sim}
				& \begin{pmatrix}
					-(n+1) \ol Y_k^x & \bar c & 0  \\
					k \, \bar a_k & -(n+k) \, \ol Y_k^x  & 0 \\
					p\, \bar d_p & 0 & - (n+2p-k) \ol Y_k^x
				\end{pmatrix} \hspace{-2pt}
				\begin{pmatrix}
					\ol K_{n+1}^x \\
					\ol K_{n+k}^y \\
					\ol K_{n+2p-k}^\theta
				\end{pmatrix} \\
				& \qquad  = 
				\begin{pmatrix}
					- [\ol \MG_n^x]_{n+k} + 2 \ol Y_{n+k-1}^x \\
					- [\ol \MG_n^y]_{n+2k-1}  + (k+1)  K_{k+1}^y \ol Y_{n+k-1}^x \\
					- [\ol \MG_n^\theta ]_{n+2p-1} + (2p-k+1) \ol K_{2p-k+1}^\theta \ol Y_{n+k-1}^x
				\end{pmatrix}.
			\end{split}
		\end{align}
		As in Theorem \ref{prop_simple_tors}, the matrix in the left hand side of \eqref{sist_lineal_tors_edo_sim} is invertible provided that $n \neq k$. In such case one can take $\ol Y_{n+k-1}^x = 0$ and we obtain $\ol K_{n+1}^x$, $\ol K_{n+k}^y$ and $\ol K_{n+2p-k}^\theta$ in a unique way. When $n=k$, the determinant of the matrix is zero. Choosing
		$$
		\ol Y_{2k-1}^x =  \frac{2k \, \ol Y_k^x \, [\ol \MG_n^x]_{2k} + \bar c \, [\MG_n^y]_{3k-2}}{2 \, (3k+1) \, \ol Y_k^x},
		$$
		system \eqref{sist_lineal_tors_edo_sim} has solutions. In this case,  $\ol K_{k+1}^x$,  $\ol K_{2k}^y$ and $\ol K_{2p}^\theta$ are not uniquely determined.
		
		Once we have chosen solutions $\ol K_{k+1}^x$,  $\ol K_{2k}^y$ and $\ol K_{n+2p-k}^\theta$ for system \eqref{sist_lineal_tors_edo_sim}  we proceed as in the case of maps.  
	\end{proof}
	
	\section{A functional equation for a parametrization of the stable manifold} \label{sec-funcional}
	
	In this section we explain the approach to study the existence of  stable invariant manifolds for analytic maps of the form \eqref{forma_normal_tor} and analytic time-dependent vector fields of the form \eqref{fnormal_tors_edo1}. We establish a functional equation for a parametrization of the stable invariant manifolds and we present the function spaces and operators that we will use. The treatment  in the map and the vector field settings are somehow analogous, so we will omit some details in the latter. 
	
	\subsection{The case of maps} \label{sec-funcional-maps}

	To study the existence of a stable invariant manifold of a map of the form \eqref{forma_normal_tor},  we first consider approximations  $\MK_n : \R \times \T^d \times \Lambda \to \R^2 \times \T^d$ and $\mathcal{R}_n: \R \times \T^d \times \Lambda \to \R \times \T^d$   of solutions of the equation 
	\begin{equation} \label{funcional-tors-1}
		F \circ K = K \circ R,
	\end{equation}
	obtained in Section \ref{sec-algoritme-tors-maps} up to a high enough order, to be determined later on. Then, keeping $R=\mathcal{R}_n$ fixed, we look for a correction $\Delta: [0, \, \rho)  \times \T^d \times \Lambda \to \R^2 \times \T^d$, for some $\rho>0$, of $\MK_n$, analytic on $(0,\rho) \times \T^d \times  \Lambda$, such that the pair $K= \MK_n + \Delta $, $R=\mathcal{R}_n$  satisfies the invariance condition
	\begin{equation} \label{eq_delta_analitic_tors}
		F \circ (\MK_n + \Delta) - (\MK_n + \Delta) \circ R = 0.
	\end{equation}
	The proof of Theorems \ref{teorema_analitic_tors} and \ref{teorema_posteriori_tors} concerning the stable manifolds is organized as follows. First, we rewrite equation \eqref{eq_delta_analitic_tors}  to separate the dominant linear part with respect to $\Delta$ and the remaining terms. This motivates the introduction of two families of operators, $\MS_{n, \, R}^\times$ and $\MN_{n, \, F}$, and the spaces where these operators will act on. We provide the properties of these operators in Lemmas \ref{invers_S_tor}  and \ref{lema_N_tors}, in particular the invertibility of $\MS^\times_{n, R}$.
	Finally, we rewrite the equation for $\Delta $ as the fixed point equation
	$$
	\Delta = \MT_{n,\, F} (\Delta), \qquad \text{where}\qquad \MT_{n, \, F} = (\MS_{n, \, R}^\times)^{-1} \circ \MN_{n, \, F},
	$$
	and we apply the Banach fixed point theorem to get the solution. The needed properties of the operators $\MT_{n, F}$ are given in  Lemma \ref{lema_contraccio_tors}.  
		
	Let $F: U \times \T^d \times \Lambda \to \R^2  \times \T^d$ be an analytic map of the form \eqref{forma_normal_tor}:
	\begin{equation*} \label{forma_simplificada_tor} 
		F(x, y, \theta, \lambda) = 
		\begin{pmatrix}
			x + c(\theta, \lambda) y \\
			y + P(x, y , \theta, \lambda) \\
			\theta + \omega + Q(x, y, \theta, \lambda)
		\end{pmatrix},
	\end{equation*}
	where $P(x, y, \theta, \lambda) = a_k (\theta, \lambda)x^k + A(x, y, \theta, \lambda)$ and $Q(x, y, \theta, \lambda) = d_p (\theta, \lambda)x^p + B(x, y, \theta, \lambda)$ and $A$ and $B$ have the form \eqref{condicions_fnormal}.
	
	From Proposition \ref{prop_simple_tors}, given $n \geq 2$ there exist  $\MK_n$ and $R= \mathcal{R}_n$,  polynomial in $u$, such that 
	\begin{equation} \label{analitic_hipotesi_tors}
		F \circ \MK_n - \MK_n \circ R = \MG_n, 
	\end{equation}
	where $\MG_n(t, \theta) = (O(t^{n+k}), \,O(t^{n+2k-1}), \, O(t^{n+2p-1}))$.
	Since we are looking for a stable manifold of $F$ we will take the approximation corresponding to $R=\mathcal{R}_n$ with the coefficient $\ol R_k^x(\lambda) <0$.
	
	Hence, we look for $\rho>0$ and a map $ \Delta: [0, \, \rho)  \times \T^d \times \Lambda \to \R^2 \times \T^d$, 
	$\Delta = (\Delta^x, \Delta^y, \Delta^\theta) = (O(u^n), \, O(u^{n+k-1}), O(u^{n+2p-k-1}))$, satisfying \eqref{eq_delta_analitic_tors},
	where $\MK_n$ and $R$ are the mentioned maps that satisfy \eqref{analitic_hipotesi_tors}.

	Using \eqref{analitic_hipotesi_tors} we  can rewrite \eqref{eq_delta_analitic_tors} as 
	\begin{align}  \begin{split}\label{eqdelta_analitic_tors}
			\Delta^x \circ R - \Delta^x  &= \MK_n^y [c\circ (\MK_n^\theta + \Delta^\theta) - c \circ \MK_n^\theta ]  + \Delta^y \, c\circ(\MK_n^\theta + \Delta ^\theta) +  \MG_n^x ,  \\
			\Delta^y \circ R -  \Delta^y  & = P \circ (\MK_n + \Delta) - P \circ \MK_n + \MG_n^y ,\\
			\Delta^\theta \circ R - \Delta^\theta &  = Q \circ (\MK_n + \Delta) - Q \circ \MK_n + \MG_n^\theta. 
		\end{split}
	\end{align}
Given $\rho \in (0, 1)$ and $\beta \in (0, \frac{\pi}{k-1})$,   let $S$ be the complex sector
	$$
	S = S(\beta, \rho) = \big\{ z \in \C \, | \ |\arg(z)| < \tfrac{\beta}{2}, \, 0 < |z| < \rho \big\}.
	$$

	\begin{definition}
		Given a sector $S = S(\beta, \, \rho)$, the complex torus $\T^d_{\sigma} $ with $\sigma > 0$, $ \Lambda_{\C} \subset \C^\ell$ and $n \in \N$, let $\MW_n$ be the Banach space 
		$$
		\MW_{n} = \bigg{\{} f : S \times \T^d_{\sigma} \times \Lambda_{\C} \rightarrow \C \, | \ f \text{ real analytic, }    \; \|f\|_n: = \sup_{(u, \theta, \lambda) \in S \times \T^d_{\sigma} \times \Lambda_{\C} } \frac{|f(u,\theta, \lambda)|}{|u|^n} < \infty \bigg{\}},
		$$
		with the norm $\|\cdot\|_n$.
	\end{definition}

	Note that when $n\ge 1$ the functions $f$ in $\MW_n$ can be continuously extended to $z=0$ with $f(0, \theta, \lambda )= 0$ and, if moreover we have  $n\ge 2$, the derivative of $f$ with respect to $z$ can be continuously extended  to  $z=0$ with $\tfrac{\partial f}{\partial u}(0, \theta, \lambda)=0$.
	
	Note also that $\MW_{n+1} \subset \MW_n$ for all $n \in  \mathbb{N}$,
	and that if  $f\in \MW_{n+1} $, then $\|f\|_n \le \|f\|_{n+1} $. More concretely we have that  $\|f\|_n \le \rho \|f\|_{n+1}$.
	Moreover, if $f\in \MW_m, \, g \in  \MW_n$, then $f g \in \MW_{m+n}$ and $\|fg\|_{m+n} \leq \|f\|_m \,\|g\|_n.$

	Given a product  space, $\prod_i \MW_i$, we endow it with the product norm 
	$$
	\|f\|_{\prod_i \MW_i} = \max_i { \|f_i \|_{\MW_i}},
	$$
	where $f_i = \pi_i \circ f$, and $\pi_i$ is the canonical projection from $\prod_j \MW_j$ to $\MW_i$.
	
	Next  we define the spaces
	\begin{equation*}
		\MW_n^\times = \MW_n \times \MW_{n+k-1} \times \MW_{n+2p-k-1}^d,
	\end{equation*}
	endowed with the product norm defined above. Note that, in our setting,  the functions in $\MW_{n+2p-k-1} $ are mapped into $\C / \Z$. 
	
	We will use the notation $\mathcal{B}_{\alpha}$ to denote a closed ball of radius $\alpha$ not always belonging to the same space. Such space will be understood by the context. For instance, we will write
	\begin{align*}
		\mathcal{B}_{\alpha}  &= \{ f  = (f^x, f^y, f^\theta)\in \MW_n^\times \, | \   \|f\|_{\MW_n^\times} \leq \alpha   \} \subset \MW_n^\times.
	\end{align*} 
		For the sake of simplicity, we will omit the parameters $\rho$, $\beta$ and $\sigma$  in the notation of the spaces $\MW_n$ and $\MW_n^\times$. We will consider  $\Lambda$ bounded. If not, we will work locally in bounded subsets of $\Lambda$.

	Since $F$ is analytic in $U \times \T^d \times \Lambda$, which is relatively compact, it has a holomorphic extension to some complex neighborhood of the form $U_\C \times \T^d_{\sigma} \times \Lambda_\C$ that contains $U \times \T^d \times \Lambda$, where $U_\C$ is a neighborhood of $(0,0)$ in $\C^2$, $\T^d_{\sigma}$ is a complex $d$-dimensional torus and $\Lambda_\C$ is a complex extension of $\Lambda$. Moreover since $\MK_n$ and $R$ are analytic maps, their domain extends to a complex domain of the form $S(\beta, \, \rho) \times \T^d_{\sigma'} \times \Lambda_\C$.

	Then it is possible to set equation \eqref{eqdelta_analitic_tors} in a space of holomorphic functions defined on $S(\beta, \, \rho) \times \T^d_{\sigma'} \times \Lambda_\C$, and to look for $\Delta$ being a real analytic function of  complex variables.
	To solve equation \eqref{eqdelta_analitic_tors}, we will consider $n $ big enough and we will look for a solution, $\Delta \in \mathcal{B}_{\alpha} \subset \MW_n^\times$, for some $\alpha >0$. 
	In what follows, we describe some conditions on $\alpha$.
	
	For compositions in \eqref{eqdelta_analitic_tors} to make sense, we need to ensure that the range of $\MK_n+ \Delta $  is contained in the domain where $F$ is analytic. 		
	Let $b > 0$ be the radius of a closed ball in $\C^2$ contained in $U_\C$, and let $\tilde \sigma < \sigma$.
	We have to consider $\rho >0$ and $ \Delta$ such that $((\MK_n+\Delta)^x, (\MK_n+\Delta)^y) \in U_\C$, $(\MK_n + \Delta)^\theta \in \T^d_{ \sigma}$. To this end we will ensure  that 
	\begin{equation} \label{restrict_1}
		|((\MK_n+\Delta)^x, (\MK_n+\Delta)^y)| \leq b \ \ \ \text{ and } \ \ \  | \text{Im} ((\MK_n + \Delta)^\theta)| \leq \tilde \sigma.
	\end{equation}
We choose $\rho$ and $\sigma'$ small enough such that
	$\sup_{S(\beta, \rho) \times \T^d_{\sigma'} \times \Lambda_\C} |(\MK_n^x (u, \theta,\lambda), \MK_n^y(u, \theta, \lambda))| \leq \tfrac{b}{2}$ and such that $\sup_{S(\beta, \rho) \times \T^d_{\sigma'} \times \Lambda_\C}  | \text{Im}(\MK_n^\theta (u, \theta, \lambda))| \leq \tfrac{\tilde \sigma}{2}$.  Later on we may take smaller values of $\rho$.

	We choose
	\begin{equation} \label{def_alfa}
	\alpha = \min \,  \big\{  \tfrac{1}{2}, \, \tfrac{b}{2}, \,  \tfrac{\tilde \sigma}{2} \big\}.
	\end{equation}
	Therefore, for $\Delta \in \mathcal{B}_\alpha \subset \MW_n^\times$,
	$$
	\sup_{S(\beta, \rho) \times \T^d_{\sigma '} \times \Lambda_\C} |\Delta^x (u, \theta, \lambda)| \leq \sup_{S} \|\Delta^x\|_{n} \, |u|^n \leq \alpha \, \rho^n \leq \tfrac{b}{2} \, \rho^n,  
	$$
	and similarly, $\sup_{S \times \T^d_{\sigma'} \times \Lambda_\C} |\Delta^y (u, \theta, \lambda)| \leq \tfrac{b}{2} \, \rho^{n+k-1}$, and 
	$$
	\sup_{S \times \T^d_{\sigma'} \times \Lambda_\C} |\Delta^\theta (u, \theta, \lambda)| \leq \sup_{S} \|\Delta^\theta\|_{n+2p-k-1} \, |u|^{n+2p-k-1} \leq \alpha \, \rho^{n+2p-k-1} \leq \tfrac{\tilde \sigma}{2} \, \rho^{n+2p-k-1},
	$$
	and in particular, $ |\text{Im}(\Delta^\theta) |  \leq \tilde \sigma/2$. Hence, with this choice of $\alpha$ the condition \eqref{restrict_1} holds true.

	Below we introduce two families of operators that will be used to deal with  \eqref{eqdelta_analitic_tors}. The definition of such operators is motivated by the equation itself. 
	
	We recall the next lemma, Lemma 2.4.2 from \cite{BFM17} that we state here with a slightly modified notation adapted to  the one of this paper.
	\begin{lemma} \label{lema_sector_tor}
		Let $R^x: S(\beta, \rho) \rightarrow \C$ be a holomorphic function of the form $R^x(u) = u +R_k u^k + O(|u|^{k+1})$, with $R_k<0$ and $k \geq 2$. Assume that $0<\beta < \frac{\pi}{k-1}$. Then, for any $\mu \in (0,\,   (k-1)|R_k| \cos \kappa)$, with $\kappa =   \frac{k-1}{2}\beta $, there exists $\rho>0$ small enough such that $R^x$ maps $S(\beta, \rho)$ into itself and
		$$
		| (R^x)^j(u)| \leq \frac{|u|}{(1+ j \, \mu \, |u|^{k-1})^{1/k-1}},  \qquad  u \in S(\beta, \, \rho),\quad  j \geq 0,
		$$
		where  $ (R^x)^j$ refers to the $j$-th iterate of the map $ R^x$.
	\end{lemma}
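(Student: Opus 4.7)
My plan is to use the classical linearization-at-infinity trick for parabolic dynamics: the change of variables $v = u^{-(k-1)}$, taken on the principal branch, maps the sector $S(\beta, \rho)$ biholomorphically onto the region $\{v \in \C : |\arg v| < \kappa, \, |v| > \rho^{-(k-1)}\}$, where $\kappa = (k-1)\beta/2 \in (0, \pi/2)$ by the hypothesis $\beta < \pi/(k-1)$. Writing $R_k = -|R_k|$ and expanding $u_{j+1} = u_j - |R_k| u_j^k + O(u_j^{k+1})$, a direct computation using $(1+x)^{-(k-1)} = 1 - (k-1)x + O(x^2)$ yields
$$
v_{j+1} = v_j + (k-1)|R_k| + \varepsilon(u_j), \qquad |\varepsilon(u)| \leq C|u|,
$$
for some constant $C>0$ and all $u$ in a neighborhood of $0$. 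Thus in $v$-coordinates the iteration is a small perturbation of a pure translation by the positive real number $(k-1)|R_k|$.

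Next I would establish sector invariance: $R^x(S(\beta,\rho))\subset S(\beta,\rho)$ for $\rho$ sufficiently small. The geometric key is that adding a positive real number $r$ to a point $v$ with $|\arg v| \leq \kappa < \pi/2$ keeps it in the same closed sector, while the perturbation $\varepsilon(u_j)$ of size $O(|u_j|) = O(|v_j|^{-1/(k-1)})$ becomes negligible when $|v_j|$ is large. Choosing $\rho$ small enough makes this argument uniform, showing that the angular condition and the magnitude condition are both preserved under one iteration of $R^x$.

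For the quantitative bound, I would use the elementary inequality: for $w \in \C$ with $|\arg w| \leq \kappa$ and $r > 0$,
$$
|w + r|^2 = |w|^2 + 2r\,\mathrm{Re}(w) + r^2 \geq |w|^2 + 2r|w|\cos\kappa + r^2\cos^2\kappa = (|w| + r\cos\kappa)^2,
$$
so $|w+r| \geq |w| + r\cos\kappa$. Applied with $w = v_j$ and $r = (k-1)|R_k|$, this gives
$$
|v_{j+1}| \geq |v_j| + (k-1)|R_k|\cos\kappa - |\varepsilon(u_j)|.
$$
Fix $\mu \in (0, (k-1)|R_k|\cos\kappa)$; shrinking $\rho$ forces $|\varepsilon(u)| \leq (k-1)|R_k|\cos\kappa - \mu$ on the sector, and induction on $j$ yields $|v_j| \geq |v_0| + j\mu = |u|^{-(k-1)} + j\mu$. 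Rearranging $|u_j|^{k-1} = 1/|v_j| \leq 1/(|u|^{-(k-1)} + j\mu)$ produces exactly the claimed estimate.

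The main technical obstacle is choosing $\rho$ small enough so that sector invariance and the lower bound on $|v_{j+1}|-|v_j|$ hold uniformly in $j$. This is less painful than it looks: once sector invariance at $j=0$ is secured, the induction also gives $|u_j| \leq |u_0| \leq \rho$, so the error $\varepsilon(u_j)$ is automatically controlled along the whole orbit with no further smallness assumption on $\rho$.
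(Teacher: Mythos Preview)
Your proof is correct and follows the classical Leau--Fatou ``linearization at infinity'' strategy for parabolic maps. The paper does not actually prove this lemma: it is quoted from \cite{BFM17}, so there is no in-paper argument to compare against. Your route via the Fatou coordinate $v=u^{-(k-1)}$ is the standard one in holomorphic dynamics, and your quantitative step (the inequality $|w+r|\ge |w|+r\cos\kappa$ followed by induction) cleanly delivers the stated iterate bound.

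One remark on the sector-invariance paragraph: the phrasing ``the perturbation becomes negligible when $|v_j|$ is large'' is slightly misleading, because the \emph{angular} gain from translating by $(k-1)|R_k|$ is of order $1/|v_j|$, which for $k\ge 3$ is \emph{smaller} than the perturbation $|\varepsilon|=O(|v_j|^{-1/(k-1)})$. The argument that actually works is the convex-cone one: for $\rho$ small enough one has $|\varepsilon(u)|<(k-1)|R_k|\sin\kappa$, so the increment $(k-1)|R_k|+\varepsilon(u_j)$ itself lies in the open sector $\{|\arg\,\cdot\,|<\kappa\}$, and since this open sector (with $\kappa<\pi/2$) is a convex cone closed under addition, $v_{j+1}=v_j+\big((k-1)|R_k|+\varepsilon\big)$ stays in it. With this formulation the induction closes exactly as you describe.

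For comparison, the paper proves the continuous-time analogue (Lemma~\ref{lema_prop_flux}) by passing to polar coordinates $u=re^{i\varphi}$, checking that the vector field points inward on $\partial S(\beta,\rho)$, and integrating the scalar inequality $\dot r\le -\tfrac{\mu}{k-1}r^k$. That approach and yours are two sides of the same coin: polar coordinates track $(\log|u|,\arg u)$ directly, while your Fatou coordinate $v=u^{-(k-1)}$ packages the same information so that the dynamics becomes a near-translation, making the modulus bound a one-line induction.
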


	\begin{definition} \label{def_S_tor}
		Let $n \geq 1$, $\beta \in (0, \tfrac{\pi}{k-1}) $, and let 
		$
		R : S (\beta, \rho) \times \T^d_{\sigma' } \to S (\beta, \rho) \times \T^d_{\sigma' }
		$
		be an analytic map of the form 
		\begin{equation} \label{R_def_Smaps}
			R(u, \theta)  = \begin{pmatrix}
				u +  R_k u^k + O(u^{k+1}) \\
				\theta + \omega 
			\end{pmatrix},
		\end{equation}
		where $R_k<0$ and the terms $O(u^{k+1})$  do not depend on $\theta$.
		
		We define 
		$\MS_{n, \, R}: \MW_n \rightarrow  \MW_n$, as the linear operator given by
		\begin{align*}
			\MS_{n, \, R}\, f =   f \circ R  - f.
		\end{align*} 
	\end{definition}
	
	\begin{remark} \label{remark_composicio_R}
		By Lemma \ref{lema_sector_tor}, for a map $R$ as in Definition \ref{def_S_tor}, we have that $R^x (u, \theta) = R^x(u)$  maps $S(\beta, \rho)$ into itself, and also, since  $\omega \in \R^d$,  $R^\theta (u, \theta) = R^\theta (\theta) $ maps $ \T^d_{\sigma'}$ into itself. Moreover,  the functions $f \in \MW_n$ are defined on $S(\beta, \rho) \times \T^d_{\sigma'}$, and thus the composition in the definition of $\MS_{n, R}$ is well defined.
		\end{remark}

	The following lemma states  that  the operators $\MS_{n,\,  R}$ have a bounded right inverse and provides a bound  for $\|\MS_{n, \, R}^{-1}\|$.  It is a slightly modified version of Lemma 5.6 of \cite{CC-EF-20}. Its proof will be omitted. 
	
	\begin{lemma} \label{invers_S_tor}
		The operator $\MS_{n, \, R} : \MW_n \to \MW_n $ with $n \geq 1$ has a bounded right inverse, 
		$$
		\MS_{n, \, R}^{-1} : \MW_{n+k-1} \to \MW_n
		$$
		given by 
		\begin{equation} \label{sol_parabolic_tor}
			\MS_{n, \, R}^{  -1} \, \eta = - \, \sum_{j=0}^\infty \, \eta \circ R^j,  \qquad \eta \in \MW_{n+k-1}.
		\end{equation}
		Moreover, for any fixed $\mu \in (0, \,\,  (k-1) |R_k^x | \cos \kappa)$, with $\kappa = \frac{k-1}{2}\beta $, there exists $\rho >0$  such that, taking $S(\beta, \, \rho) \times \T^d_{\sigma'}$ as the domain of the functions  of $\MW_{n+k-1}$,  we have the bound
		$$
		\|(\MS_{n, \, R})^{-1}\| \leq \rho^{k-1}  + \tfrac{1}{\mu} \, \tfrac{k-1}{n}.
		$$
	\end{lemma}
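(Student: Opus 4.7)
The plan is to take the formula $\MS_{n,R}^{-1}\eta = -\sum_{j\ge 0}\eta\circ R^j$ as a \emph{definition} and then verify three things: (i) the series converges and defines a holomorphic function in $\MW_n$; (ii) the operator norm has the stated bound; and (iii) the telescoping identity $\MS_{n,R}\circ \MS_{n,R}^{-1} = \mathrm{Id}$ holds. The motivation for the formula is the formal iteration of $f = -\eta + f\circ R$, but I would not need to recover $f$ this way; all the analytic work sits in controlling the decay of $\eta\circ R^j$.

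For step (i)-(ii), by Remark \ref{remark_composicio_R} each iterate $R^j$ maps $S(\beta,\rho)\times\T^d_{\sigma'}$ into itself, so every composition $\eta\circ R^j$ is well defined and holomorphic; the $\theta$-component is just the translation $\theta\mapsto\theta+j\omega$ and plays no role in the size estimates. For $\eta\in\MW_{n+k-1}$ one has $|\eta\circ R^j(u,\theta,\lambda)|\le \|\eta\|_{n+k-1}\,|R^{x,j}(u)|^{n+k-1}$, and Lemma \ref{lema_sector_tor} supplies
\[
|R^{x,j}(u)| \le \frac{|u|}{(1+j\mu|u|^{k-1})^{1/(k-1)}}, \qquad u\in S(\beta,\rho).
\]
Summing the $(n+k-1)$-th powers reduces everything to bounding $\sum_{j\ge 0}(1+ja)^{-p}$ with $a=\mu|u|^{k-1}$ and $p=(n+k-1)/(k-1)$, which satisfies $p-1=n/(k-1)>0$.

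Then a monotone-integral comparison gives
\[
\sum_{j=0}^\infty (1+ja)^{-p} \;\le\; 1 + \int_0^\infty (1+xa)^{-p}\,dx \;=\; 1 + \frac{1}{a(p-1)} \;=\; 1 + \frac{k-1}{\mu n\,|u|^{k-1}},
\]
so that
\[
\bigl|\MS_{n,R}^{-1}\eta(u,\theta,\lambda)\bigr| \;\le\; \|\eta\|_{n+k-1}\,\Bigl(|u|^{n+k-1} + \tfrac{k-1}{\mu n}\,|u|^n\Bigr).
\]
Dividing by $|u|^n$ and taking supremum over $|u|<\rho$ simultaneously shows that $\MS_{n,R}^{-1}\eta\in\MW_n$ and that $\|\MS_{n,R}^{-1}\|\le \rho^{k-1}+\tfrac{1}{\mu}\tfrac{k-1}{n}$. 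Uniform convergence on slightly smaller sectors, obtained by the same estimate with $\rho$ replaced by a smaller radius, guarantees that the series is holomorphic.

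Finally, step (iii) is a direct telescoping:
\[
\MS_{n,R}\bigl(\MS_{n,R}^{-1}\eta\bigr) \;=\; -\sum_{j=0}^\infty \eta\circ R^{j+1} \;+\; \sum_{j=0}^\infty \eta\circ R^j \;=\; \eta,
\]
with the rearrangement legitimised by absolute convergence. The main obstacle is really the sectorial estimate on iterates: the aperture constraint $\beta<\pi/(k-1)$ is what allows $\kappa=\tfrac{k-1}{2}\beta<\pi/2$ and hence $\cos\kappa>0$, which in turn allows a positive $\mu<(k-1)|R_k|\cos\kappa$; one then has to choose $\rho$ small enough (depending on $\mu$, $\beta$, and the higher-order coefficients of $R^x$) for Lemma \ref{lema_sector_tor} to apply. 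Once that is in place, the remaining work is the integral comparison and the telescoping identity, both routine.
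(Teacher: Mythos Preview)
Your proposal is correct and follows essentially the same approach the paper relies on: the paper omits the proof of this lemma, referring to Lemma~5.6 of \cite{CC-EF-20}, but the analogous vector-field version (Lemma~\ref{invers_S_tor_edo}) is proved in the paper by exactly the scheme you describe---use the decay estimate on iterates (Lemma~\ref{lema_sector_tor}), bound the resulting series by an integral comparison, and check the right-inverse identity by telescoping. Your handling of the integral comparison $\sum_{j\ge 0}(1+ja)^{-p}\le 1+\tfrac{1}{a(p-1)}$ with $p-1=n/(k-1)$ is precisely what produces the two terms $\rho^{k-1}$ and $\tfrac{1}{\mu}\tfrac{k-1}{n}$ in the stated bound.
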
 

		\begin{definition} \label{def_N_analitic_tor}
		Let $F$ be the holomorphic extension of an analytic map of the form \eqref{forma_normal_tor} satisfying the hypotheses of Theorem \ref{teorema_analitic_tors}. Let $\alpha$ be as in \eqref{def_alfa}.
		
		Given $n \geq 3$   we introduce $\MN_{n, \, F} = (\MN^x_{n,  F},\, \MN^y_{n,  F}, \, \MN_{n, F}^\theta): \mathcal{B}_{\alpha} \subset \MW_n^\times \to \MW_{n+k-1}^\times$, given by 
		\begin{align*}
		\MN_{n, \, F}^x (f) &= \MK_n^y [c\circ (\MK_n^\theta + f^\theta) - c \circ \MK_n^\theta ]  + f^y \, c\circ(\MK_n^\theta + f^\theta) +  \MG_n^x ,  \\
		\MN_{n, \, F}^y(f) & = P \circ (\MK_n + f) - P \circ \MK_n + \MG_n^y ,\\
		\MN_{n, \, F}^\theta(f)  &  = Q \circ (\MK_n + f) - Q \circ \MK_n + \MG_n^\theta.
		\end{align*}
	\end{definition}
	
	In the following lemma we show that the operators $\MN_{n, \, F}$ are Lipschitz  and we provide  bounds for their Lipschitz constants. 
	
	\begin{lemma} \label{lema_N_tors} For each $n \geq 3$, there exists  a constant, $M_n >0$, for which  the operator $\MN_{n, \, F}$  satisfies
		\begin{align*}
			\Lip \MN^x_{n, \, F}  & \leq \sup_{\theta \in \T^d_\sigma} |c(\theta)| +  M_n  \rho, \\
		\Lip \MN^y_{n, \, F} & \leq   k\, \sup_{\theta \in \T^d_\sigma}  |a_k(\theta)| +  M_n  \rho, \\
			\Lip \MN^\theta_{n, \, F} & \leq   p \, \sup_{\theta \in \T^d_\sigma}  |d_p(\theta)| +  M_n  \rho,
		\end{align*}
		where $\rho$ is the radius of the sector $S(\beta, \, \rho)$. 
	\end{lemma}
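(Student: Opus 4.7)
The plan is to take arbitrary $f,g\in\mathcal{B}_\alpha$ and, for each scalar component of $\MN_{n,F}$, decompose the difference $\MN^\bullet(f)-\MN^\bullet(g)$ into a single dominant linear term, whose operator norm in the target $\MW$-space supplies the stated leading constant, plus a sum of residual terms. The residuals will be shown to lie in spaces $\MW_m$ whose exponent $m$ exceeds that of the target by at least one unit, so that expressing them in the coarser target norm on $S(\beta,\rho)\times\T^d_{\sigma'}\times\Lambda_\C$ produces an extra factor of $\rho$. The constant $M_n$ will then collect suprema of $c$, $a_k$, $d_p$, $A$, $B$ and their first two derivatives (obtained via Cauchy's formula on a slightly thinner torus and smaller ball than the analyticity domain of $F$), together with $\MW$-norms of the polynomial $\MK_n$.

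For $\MN^x_{n,F}$ I employ the algebraic identity
\begin{equation*}
\MN^x(f)-\MN^x(g)=(f^y-g^y)\,c\circ(\MK_n^\theta+f^\theta)+(\MK_n^y+g^y)\bigl[c\circ(\MK_n^\theta+f^\theta)-c\circ(\MK_n^\theta+g^\theta)\bigr].
\end{equation*}
The first summand is bounded in $\MW_{n+k-1}$ by $\sup_{\T^d_\sigma}|c|\cdot\|f^y-g^y\|_{n+k-1}$, producing the leading constant. For the bracket, the integral mean value theorem and a Cauchy estimate for $\partial_\theta c$ yield an element of $\MW_{n+2p-k-1}^d$ with norm proportional to $\|f^\theta-g^\theta\|_{n+2p-k-1}$; multiplying by $\MK_n^y+g^y\in\MW_{k+1}$ places the full summand in $\MW_{n+2p}$, whose exponent exceeds the target by $2p-k+1\ge 1$ (using $2p>k-1$).

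For $\MN^y_{n,F}$ and $\MN^\theta_{n,F}$ I invoke the integral mean value theorem
\begin{equation*}
P\circ(\MK_n+f)-P\circ(\MK_n+g)=\int_0^1 DP\bigl(\MK_n+g+s(f-g)\bigr)\,ds\cdot(f-g),
\end{equation*}
and analogously for $Q$. The dominant contribution stems from the $\partial_x$-entry of $DP$ evaluated at $\MK_n$, namely $k\,a_k(\MK_n^\theta)(\MK_n^x)^{k-1}$; since $\MK_n^x=u^2+O(u^3)$, the factor $(\MK_n^x)^{k-1}$ equals $u^{2k-2}$ modulo $\MW_{2k-1}$, so $k\,a_k(\MK_n^\theta)u^{2k-2}(f^x-g^x)\in\MW_{n+2k-2}$ has norm bounded by $k\sup_{\T^d_\sigma}|a_k|\cdot\|f^x-g^x\|_n$. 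The residual pieces all lie in strictly smaller spaces: $\partial_y P(\MK_n)(f^y-g^y)\in\MW_{n+3k-3}$ since $\partial_y P=O(\|(x,y)\|^{k-1})$ by \eqref{condicions_fnormal} (surplus $k-1\ge 1$); $\partial_\theta P(\MK_n)(f^\theta-g^\theta)\in\MW_{n+k+2p-1}$ (surplus $2p-k+1$); and the Taylor remainder $[DP(\MK_n+g+s(f-g))-DP(\MK_n)]\cdot(f-g)$, whose leading $x$-part uses the binomial identity $(\MK_n^x+\delta)^{k-1}-(\MK_n^x)^{k-1}\in\MW_{2k+n-4}$ for $\delta=g^x+s(f^x-g^x)\in\MW_n$, ends up in $\MW_{2n+2k-4}$, giving surplus $n-2\ge 1$. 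The analysis of $\MN^\theta_{n,F}$ is entirely parallel, with $p\,d_p(\MK_n^\theta)(\MK_n^x)^{p-1}\in\MW_{2p-2}$ as the leading factor and $\MW_{n+2p-2}^d$ as the target.

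The main technical hurdle is the simultaneous weight bookkeeping across all residual terms: one must verify that each piece, whether originating from the decay structure of $A$ and $B$ in \eqref{condicions_fnormal}, from the binomial tails of $(\MK_n^x+\delta)^{k-1}$, or from cross-products involving derivatives of $c$, $a_k$, $d_p$, indeed loses at least one order of $u$ relative to the leading term. This rests on the combination of the leading orders $\MK_n^x=O(u^2)$, $\MK_n^y=O(u^{k+1})$, the decay assumptions \eqref{condicions_fnormal}, the hypothesis $2p>k-1$, and the standing assumption $n\ge 3$ (the last being needed precisely so that the binomial-tail surplus $n-2$ is strictly positive).
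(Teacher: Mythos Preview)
Your proposal is correct and follows essentially the same approach as the paper. The only organizational difference is that for $\MN^y_{n,F}$ and $\MN^\theta_{n,F}$ the paper first splits $P=a_k(\theta)x^k+A$ (resp.\ $Q=d_p(\theta)x^p+B$) and applies the mean value theorem separately to each piece, whereas you apply the integral mean value theorem to the full $P$ (resp.\ $Q$) and then isolate the leading contribution $k\,a_k(\MK_n^\theta)(\MK_n^x)^{k-1}$ from $DP(\MK_n)$; the resulting estimates and the weight bookkeeping are the same.
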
 
	\begin{proof}
		We deal with the three components of $\MN_{n, F}$ separately. 
		First we prove the bound for  $\Lip \MN_{n, F}^x$. Let $f, \, \tilde f \in \mathcal{B}_{\alpha} \subset \MW_n^\times$. We have, 
		\begin{align*}
			\MN_{n, F}^x (f) - 	\MN_{n, F}^x (\tilde f) 
			& =  (\MK_n^y + f^y) \, \int_0^1 Dc \circ (\MK_n^\theta + \tilde f^\theta  + s(f^\theta - \tilde f^\theta)) \, ds    \, (f^\theta - \tilde f^\theta) \\
			& \quad + c \circ (\MK_n^\theta + \tilde f^\theta) \, (f^y - \tilde f^y).
		\end{align*}
		We can then bound, for some $M_n >0$,
		\begin{align*}
			\| (\MK_n^y + f^y) \, \int_0^1 & Dc \circ  (\MK_n^\theta + \tilde f^\theta  + s(f^\theta - \tilde f^\theta)) \, ds    \, (f^\theta - \tilde f^\theta) \|_{n+k-1} \\
			& \leq  \sup_{\T^d_\sigma} |Dc(\theta)| \, \sup_{S \times \T^d_{\sigma'}} \frac{1}{|u|^{n+k-1}} |\MK_n^y(u, \theta) + f^y(u, \theta)| |f^\theta(u, \theta) - \tilde f^\theta (u, \theta)| \\
			&   \leq \|f^\theta - \tilde f^\theta \|_{n+2p-k-1} \, \|\MK_n^y + f^y\|_{k+1} \,\sup_{\T^d_\sigma} |Dc(\theta)| \,  \rho^{2p-k+1} \\
			& \leq M_n \, \rho^{2p-k+1} \, \|f^\theta - \tilde f^\theta \|_{n+2p-k-1}.
		\end{align*}
		On the other hand,
		\begin{align*}
			\|c \circ (\MK_n^\theta + \tilde f^\theta) \, (f^y - \tilde f^y) \|_{n+k-1} & = \sup_{S \times \T^d_{\sigma '}} | c \circ (\MK_n^\theta + \tilde f^\theta) (u, \theta)| \frac{   |f^y (u, \theta) - \tilde f^y (u, \theta)|}{|u|^{n+k-1}} \\
			& \leq \sup_{\T^d_\sigma} |c (\theta)| \, \|f^y  - \tilde f^y\|_{n+k-1}, 
		\end{align*}
		and thus, we obtain 
		\begin{align*}
			\| \MN^x_{n,   F} (f) - \MN^x_{n,   F} (\tilde{f})\|_{n+k-1}  \leq  (\sup_{\T^d_\sigma} |c(\theta)| + M_n \rho ) \max \{ \|f^y  - \tilde f^y\|_{n+k-1}, \, \|f^\theta  - \tilde f^\theta\|_{n+2p-k-1}\},
		\end{align*}
		that is, 
		$
		\Lip \MN^x_{n, \, F}   \leq \sup_{\T^d_\sigma} |c(\theta)| + M_n \rho.$
		
		Next we consider $\MN_{n, F}^y$. We write, for $f, \, \tilde f \in \mathcal{B}_\alpha \subset \MW_n^\times$,
		\begin{align}
			\begin{split} \label{dif_N_tor}
				& \MN^y_{n,   \, F} (f) - \MN^y_{n,  \,  F} (\tilde f) = T_1^y + T_2^y,
			\end{split}
		\end{align}
		where 
		\begin{equation*} \label{part_dominant_Ny}
		T_1^y =
	a_k \circ (\MK_n^\theta  + f^\theta)(\MK_n^x + f^x)^k - a_k \circ (\MK_n^\theta  + \tilde f^\theta)(\MK_n^x + \tilde f^x)^k  \, \in \MW_{n+2k-2},
	\end{equation*}
		and $$
		T_2^y = \int_0^1 DA \circ \xi_s \, ds \, (f-\tilde f) \, \in \MW_{n+2k-1},
		$$
		where we have defined, for $s \in [0,1]$,
		$$
		\xi_s = \xi_s(f, \tilde{f}) = \MK_n + \tilde f + s(f - \tilde f) \in \MW_{2} \times \MW_{k+1} \times (\MW_{0})^d.
		$$
Note that indeed we have
		$
		\xi_s^x (u, \theta) = u^2 + O(|u|^3)$,  $ \xi_s^y (u, \theta) = \ol K_{k+1}^y u^{k+1} + O(|u|^{k+2})$, and $\xi_s^\theta (u, \theta) = \theta + O(|u|)$,
		since the presence of $f$ does not affect the lowest order terms of $\xi_s$, and since the coefficients depending on $\theta$ of $\MK_n(u, \theta)$ are bounded for $\theta \in \T^d_{\sigma'}$, as a consequence of the small divisors lemma.
		
	Since $T_1^y$ contains the leading terms of \eqref{dif_N_tor},	it is sufficient to bound the norm $\| T_1^y \|_{n+2k-2}$ to obtain the required estimate for \eqref{dif_N_tor}.  We write
		\begin{align*}
				T_1^y & =  a_k \circ (\MK_n^\theta  + f^\theta)(\MK_n^x + f^x)^k  - a_k \circ (\MK_n^\theta   + \tilde f^\theta)(\MK_n^x + \tilde f^x)^k \\
				&= a_k \circ (\MK_n^\theta + f^\theta) \, k \int_0^1   (\xi_s^x)^{k-1} \, ds (f^x - \tilde f^x)  + (\MK_n^x + \tilde f^x)^k  \int_0^1 D a_k \circ (\xi_s^\theta)  \, ds \,  (f^\theta - \tilde f^\theta) 
		\end{align*}
		and decomposing the last expression as $T_{11}^y + T_{12}^y$
in the obvious way,  		we have 
		\begin{align*}
			\| T_{11}^y\|_{n+  2k-2} &  \leq 	\| a_k \circ (\MK_n^\theta + f^\theta) \,  k  \int_0^1  (\xi_s^x)^{k-1} \, ds \,   \|_{2k-2} \,  \|(f^x - \tilde f^x)\|_{n} \\
			& \leq k \, \sup_{\T^d_\sigma}  \, |a_k(\theta)| \,   \sup_{s \in [0,   1]} \, \sup_{S \times \T^d_{\sigma' }} \, \frac{1}{|u|^{2k-2}} \, |\xi_s^x(u, \theta)|^{k-1}  \, \|f^x- \tilde f^x\|_n \\
			&    \leq  k \,  \sup_{\T^d_{\sigma}} \,   |a_k (\theta)|(1+ M_n \rho)\, \|f^x- \tilde f^x\|_n,
		\end{align*}
		and  $	\| T_{12}^y\|_{n+2k-2}
			 \leq M_n \, \rho^{2p-k+1} \, \|f^\theta - \tilde f^\theta \|_{n+2p-k-1}, $
		where $2p-k+1 \geq 1$.
		
		Putting together the obtained bounds, we have
		\begin{align*}
			\|\MN^y_{n,   \, F} (f) - \MN^y_{n,  \,  F} (\tilde f)\|_{n+2k-2} & \leq \|a_k(\MK_n^\theta  + f^\theta)(\MK_n^x +  f^x)^k - a_k(\MK_n^\theta  + \tilde f^\theta)(\MK_n^x + \tilde f^x)^k\|_{n+2k-2} \\
			& \quad +  \| T_2^y \|_{n+2k-2} \\ & \leq  (k \,  \sup_{\T^d_{\sigma}}  \,  |a_k (\theta)| + M_n \rho)\, \|f- \tilde f\|_{\MW_n^\times}.
		\end{align*}

		Finally we prove the result for  $\MN_{n, F}^\theta$ in an analogous way as for $\MN_{n, F}^y$.		
		Here we have, for each $f , \tilde f  \in \mathcal{B}_\alpha \subset \MW_n^\times$,
		\begin{align}
			\begin{split} \label{dif_N_tor_2}
				& \MN^\theta_{n,   \, F} (f) - \MN^\theta_{n,  \,  F} (\tilde f) = T_1^\theta + T_2^\theta, 
			\end{split}
		\end{align}
		where 
		$$
		T_1^\theta =	d_p \circ(\MK_n^\theta  + f^\theta)(\MK_n^x + f^x)^p - d_p \circ (\MK_n^\theta  + \tilde f^\theta)(\MK_n^x + \tilde f^x)^p  \, \in \MW_{n+2p-2},
			$$
		and 
		$$
		T_2^\theta = 	\int_0^1 DB \circ \xi_s  \, ds \, (f - \tilde f) \, \in \MW_{n+2p-1}.
		$$
		Since $T_1^\theta$ contains the leading terms of \eqref{dif_N_tor_2} we look for a bound for $\|T_1^\theta\|_{n+2p-2}$. 
		We have
		\begin{align*}
			T_1^\theta &= d_p \circ (\MK_n^\theta  + f^\theta)(\MK_n^x + f^x)^p  - d_p \circ (\MK_n^\theta  + \tilde f^\theta)(\MK_n^x + \tilde f^x)^p \\
			& = d_p \circ (\MK_n^\theta + f^\theta) \, p \int_0^1   (\xi_s^x)^{p-1} \, ds (f^x - \tilde f^x)  + (\MK_n^x + \tilde f^x)^p   \int_0^1 D d_p \circ (\xi_s^\theta)  \, ds (f^\theta - \tilde f^\theta).
		\end{align*}
	We decompose $T_1^\theta   = T_{11}^\theta + T_{12}^\theta$. We have 
			\begin{align*}
			\| T_{11}^\theta \|_{n+  2p-2} 
			& \leq 	\| d_p \circ (\MK_n^\theta + f^\theta) \,  p  \int_0^1  (\xi_s^x)^{p-1} \, ds \,   \|_{2p-2} \,  \|f^x - \tilde f^x\|_{n} \\
			&  \leq \sup_{s \in [0,   1]} \, \sup_{S \times \T^d_{\sigma' }} \, \frac{1}{|u|^{2p-2}} \,(p \, |d_p \circ (\MK_n^\theta + f^\theta)(u, \theta)|\,  |\xi_s^x(u, \theta)|^{p-1} ) \, \|f^x- \tilde f^x\|_n \\
			&    \leq  p \,  \sup_{\T^d_{\sigma}}  \,  |d_p (\theta)|(1 + M_n \rho)\, \|f^x- \tilde f^x\|_n, 
		\end{align*}
		and similarly, $	\|T_{12}^\theta \|_{n+2p-2} 
		\leq M_n \, \rho^{2p-k+1} \, \|f^\theta - \tilde f^\theta \|_{n+2p-k-1}$.
The term $T_2^\theta$ is of higher order. We  have $	\| T_2^\theta \|_{n+  2p-2}\le \rho\,  \| T_2^\theta \|_{n+  2p-1} $.  
With these estimates we get the bound for $\Lip  \MN^\theta_{n, \, F}$ claimed in the statement. 
	\end{proof}
	
	Next, we introduce some more operators.
	
	\begin{definition}
		For $n > 2p-k-1$, we denote by $\MS_{n, R}^\times : \MW_n^\times \to \MW_n^\times$ the linear operator defined component-wise as $ \MS_{n, \, R}^\times  = (\MS_{n,\, R}, \, \MS_{n+k-1,\, R}, \, ( \MS_{n+2p-k-1,\, R})^d)$.
	\end{definition}
	
	\begin{remark}
		Since the components of $\MS_{n, \, R}^\times$ are uncoupled, a right inverse 
		$(\MS_{n, R}^\times)^{-1} :\MW_{n+k-1}^\times \to \MW_n^\times  $
		 is given by 
        $$
		(\MS_{n, R}^\times)^{-1} = (\MS_{n, \,  R}^{-1}, \, \MS_{n+k-1, \, R}^{-1}, \, (\MS_{n+2p-k-1, \, R}^{-1})^d).
		$$
	\end{remark}
	
	\begin{definition} \label{def_T_analitic} 	Let $F$ be the holomorphic extension of an analytic map of the form \eqref{forma_normal_tor} satisfying the hypotheses of Theorem \ref{teorema_analitic_tors}.
		Given $n \geq 3$, we define $\MT_{n, \, F} : \mathcal{B}_\alpha \subset \MW_n^\times \to \MW_n^\times$ by 
		$$
		\MT_{n,  F} = (\MS_{n, R}^\times)^{-1} \circ \MN_{n,  F}.
		$$
	\end{definition}
	
	Using  the above operators, equations \eqref{eqdelta_analitic_tors} can be written as 
	\begin{equation*} 
		\MS_{n, \, R}^\times \, \Delta = \MN_{n, \, F} (\Delta ).
	\end{equation*}
	
	\begin{lemma} \label{lema_contraccio_tors}
		There exist $m_0 > 0$ and $\rho_0 >0$ such that if $\rho < \rho_0$ and  $n \geq m_0$, we have $\MT_{n, \, F} (\mathcal{B}_\alpha) \subseteq \mathcal{B}_\alpha$ and $\MT_{n , \, F}$ is a contraction operator in $\mathcal{B}_\alpha$. 
	\end{lemma}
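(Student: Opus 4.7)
The plan is to verify the two standard hypotheses of the Banach fixed point theorem for $\mathcal{T}_{n,F}$ on $\mathcal{B}_\alpha \subset \mathcal{W}_n^\times$: first that $\mathcal{T}_{n,F}$ is Lipschitz with constant $<1$, and then that it sends $\mathcal{B}_\alpha$ to itself. The contraction property will follow from the bound
\[
\mathrm{Lip}\, \mathcal{T}_{n,F} \le \|(\mathcal{S}_{n,R}^\times)^{-1}\| \cdot \mathrm{Lip}\, \mathcal{N}_{n,F},
\]
applied componentwise. For each of the three components I would combine the estimate from Lemma \ref{invers_S_tor}, which on the $x$, $y$, $\theta$-components gives a bound of the form $\rho^{k-1}+\frac{1}{\mu}\frac{k-1}{n+*}$ with $*\in\{0,k-1,2p-k-1\}$, with the Lipschitz estimates from Lemma \ref{lema_N_tors}. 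This yields, for the $x$-component, a bound like $\big(\rho^{k-1}+\tfrac{k-1}{\mu n}\big)\big(\sup_\theta|c(\theta)|+M_n\rho\big)$, and similar expressions for the $y$ and $\theta$ components involving $k\sup|a_k|$ and $p\sup|d_p|$.

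The key observation is that, although $\sup|c|$, $k\sup|a_k|$, $p\sup|d_p|$ are fixed constants determined by $F$ and not small, the factor $\|(\mathcal{S}_{n,R}^\times)^{-1}\|$ can be made arbitrarily small by choosing $n$ large and $\rho$ small. So I would first fix $m_0$ so that for $n\geq m_0$ the term $\frac{k-1}{\mu n}\cdot\max(\sup|c|,k\sup|a_k|,p\sup|d_p|)$ is, say, $<1/8$, and then fix $\rho_0$ small enough that both $\rho^{k-1}\cdot\max(\sup|c|,k\sup|a_k|,p\sup|d_p|)$ and the remaining $M_n\rho$ contributions are $<1/8$, where we take $n=m_0$ to define the constant $M_{m_0}$ (the order of choosing: first $m_0$, then $\rho_0$ dependent on $m_0$). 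This yields $\mathrm{Lip}\,\mathcal{T}_{n,F}\le 1/2$ in the product norm of $\mathcal{W}_n^\times$.

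For the self-map condition, I would use $\|\mathcal{T}_{n,F}(f)\|\le \mathrm{Lip}\,\mathcal{T}_{n,F}\cdot\|f\|+\|\mathcal{T}_{n,F}(0)\|$, so that it suffices to ensure $\|\mathcal{T}_{n,F}(0)\|\le \alpha/2$. By Definition \ref{def_N_analitic_tor}, $\mathcal{N}_{n,F}(0)=(\mathcal{G}_n^x,\mathcal{G}_n^y,\mathcal{G}_n^\theta)$, and by \eqref{eq_prop1_tor_sim} this lies in $\mathcal{W}_{n+k}\times\mathcal{W}_{n+2k-1}\times\mathcal{W}_{n+2p-1}^d$, which is \emph{one order higher} in $u$ than the codomain $\mathcal{W}_{n+k-1}^\times$ of $\mathcal{N}_{n,F}$. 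Consequently $\|\mathcal{G}_n^x\|_{n+k-1}\le \rho\|\mathcal{G}_n^x\|_{n+k}$ and similarly for the other components, so applying $\|(\mathcal{S}_{n,R}^\times)^{-1}\|$ introduces an extra multiplicative $\rho$, and $\|\mathcal{T}_{n,F}(0)\|$ is of order $\rho$ times constants depending on $n$. Shrinking $\rho_0$ if necessary, this is bounded by $\alpha/2$.

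The main obstacle is the interplay between the $n$-dependence of the constant $M_n$ in Lemma \ref{lema_N_tors} (which arises from derivatives of the polynomial $\mathcal{K}_n$ and can grow with $n$) and the desire to have $\rho_0$ independent of $n\ge m_0$. I would handle this by reading the statement as: fix $m_0$ large enough that the $n$-dependent term in $\|(\mathcal{S}_{n,R}^\times)^{-1}\|$ beats the fixed constants $\sup|c|$, $k\sup|a_k|$, $p\sup|d_p|$ uniformly for $n\ge m_0$, then define $\rho_0$ in terms of $m_0$; since the bounds from Lemma \ref{invers_S_tor} are monotone decreasing in $n$, the same $\rho_0$ then works for all $n\ge m_0$, provided one only uses the approximation $\mathcal{K}_{m_0}$ (so that $M_n$ is replaced by the single constant $M_{m_0}$). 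With these choices both $\mathrm{Lip}\,\mathcal{T}_{n,F}\le 1/2$ and $\mathcal{T}_{n,F}(\mathcal{B}_\alpha)\subseteq \mathcal{B}_\alpha$ follow, completing the lemma.
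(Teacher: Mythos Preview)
Your proposal is correct and follows essentially the same approach as the paper: bound $\mathrm{Lip}\,\mathcal{T}_{n,F}$ componentwise by combining Lemma~\ref{invers_S_tor} with Lemma~\ref{lema_N_tors}, then handle the self-map condition via $\|\mathcal{T}_{n,F}(f)\|\le \mathrm{Lip}\,\mathcal{T}_{n,F}\cdot\|f\|+\|\mathcal{T}_{n,F}(0)\|$ and the observation that $\mathcal{N}_{n,F}(0)=\mathcal{G}_n$ gains one extra order in $u$. You are in fact more careful than the paper about the $n$-dependence of $M_n$ and of $\|\mathcal{T}_{n,F}(0)\|$: the paper's proof tacitly allows the final $\rho$ to depend on $n$ (it writes ``there is $\rho_n>0$'') and does not discuss the growth of $M_n$, relying on the fact that in the application (proof of Theorem~\ref{teorema_analitic_tors}) the lemma is invoked only at the single value $n=n_0$, so uniformity in $n$ is not actually needed.
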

	\begin{proof}
		By the definition of $\MT_{n,  F}$ and the norm in $\MW_n^\times$,
		\begin{align}
			\begin{split} \label{formula_lip_T}
				\Lip  \MT_{n,  \, F}  \leq \max \Big\{ & \|(\MS_{n, \, R})^{-1}\| \, \Lip \MN^x_{n, \,  F}, \, 
				\|(\MS_{n+k-1, \,  R})^{-1}\| \, \Lip \MN^y_{n,  F}, \\
				&  \|(\MS_{n+2p-k-1, \,  R})^{-1}\| \, \Lip \MN^\theta_{n, \,  F}\Big\}.
			\end{split}
		\end{align}
From \eqref{formula_lip_T} and the estimates obtained in Lemmas \ref{invers_S_tor}  and \ref{lema_N_tors}, given 
		$\mu \in (0,   (k-1) |\ol R_k^x | \cos \kappa)$, with $\kappa =  \frac{k-1}{2}\beta$,  there is $\rho_0 > 0$ such that for $\rho \in (0, \rho_0)$ we have the bound
		\begin{align*}
			\Lip \MT_{n, \,  F}   \leq  & \max \,  \Big\{  (\rho^{k+1} + \tfrac{1}{\mu} \tfrac{k-1}{n})(\sup_{T^d_{\sigma}} \,  |c(\theta)| + M_n \rho),    \\
			& \qquad (\rho^{k+1} + \tfrac{1}{\mu} \tfrac{k-1}{n+k-1})(\sup_{T^d_{\sigma}} \,  |a_k(\theta)| + M_n \rho), \, (\rho^{k+1} + \tfrac{1}{\mu} \tfrac{k-1}{n+2p-k-1})(\sup_{T^d_{\sigma}} \,  |d_p(\theta)| + M_n \rho ) \Big\} ,
		\end{align*}
		taking $S(\beta, \rho) \times \T^d_{\sigma' }$ as the domain of the functions of $\mathcal{B}_\alpha$.

		Then, choosing $\rho  < \rho_0$ small enough, it is clear that one can chose $m_0$ such that, for $n \geq m_0$, one has
		$\Lip  \MT_{n, \,  F}    < 1$. 
		
		Next we prove that one can find $\tilde \rho_0 > 0$, maybe smaller than $ \rho_0 $, such that taking $S(\beta, \rho) \times \T^d_{\sigma'}$, with $\rho  < \tilde \rho_0$ as the domain of the functions of $\mathcal{B}_\alpha$, then $\MT_{n, F}$ maps $\mathcal{B}_\alpha$ into itself.

		For each $f \in \mathcal{B}_\alpha$ we can write
		\begin{align*}
			\|\MT_{n, \, F}  (f) \|_{\MW_n^\times}  & \leq \| \MT_{n, \, F}  (f) -  \MT_{n, \, F}  (0)\|_{\MW_n^\times}  + \|\MT_{n, \, F}  (0)\|_{\MW_n^\times} \\
			& \leq  \alpha \, \Lip \MT_{n, \, F} + \|\MT_{n, \, F}  (0)\|_{\MW_n^\times}.
		\end{align*}
From the definitions of  $\MT_{n,  F} $ and $\MN_{n,  F}$  we have, for each $n \in \N$,
		$$
		\MT_{n,  F}(0) = (\MS_{n, \, R}^\times)^{-1} \circ \MN_{n, \, F} ( 0) = (\MS_{n, \, R}^\times)^{-1} \,  \MG_n.
		$$
		Also, we have $\MG_n = (\MG_n^x, \, \MG_n^y, \, \MG_n^\theta) \in \MW_{n+k} \times \MW_{n+2k-1} \times (\MW_{n+2p-1})^p$, and thus,  for every $\varepsilon > 0$, there is $\rho_n>0$ such that for $\rho< \rho_n$ one has
		\begin{align*}
			\|\MT_{n, \, F} (0)\|_{\MW_n^\times}  
			\leq \| (\MS_{n, \, R}^\times)^{-1} \| \, \max \{  \| \MG_n^x \|_{n+k-1}, \,  \| \MG_n^y \|_{n+2k-2}, \,  \| \MG_n^\theta \|_{n+2p-2}  \}   < \varepsilon.
		\end{align*}
		Moreover, since we have $\Lip \MT_{n,  F} <1$, we can take $\rho_n$ such that 
		$ \alpha \,  \Lip \MT_{n, \, F}  + \|\MT_{n, \, F} (0) \|_{\MW_n^\times} \leq \alpha 
		$,
		and then for every $\rho < \rho_n$ one has  $\MT_{n, \, F} (\mathcal{B}_\alpha) \subseteq \mathcal{B}_\alpha$. We have to take $\varepsilon \le \alpha (1-\Lip \MT_{n, \, F} )$.
	\end{proof}

\subsection{The case of vector fields} \label{sec-funcional-vf}

In this setting, we consider approximations  $ \MK_n : \R \times \T^d \times \R \times \Lambda \to \R^2 \times  
\T^d$ and $\mathcal{Y}_n: \R \times \T^d \times \R  \times \Lambda \to \R \times \T^d$   of 
solutions of equation \eqref{conj_tors_edo} obtained in Section \ref{sec_algoritme_tors_edo} up to a high enough order. Then, keeping $Y=\mathcal{Y}_n$ fixed, we look for a correction $\Delta: [0, \, \rho)  \times \T^d \times \R  \times \Lambda \to \R^2  \times \T^d$, for some $\rho>0$, of $\MK_n$, analytic on $(0,\rho) \times \T^d  \times \R \times \Lambda$, such that the pair $K= \MK_n + \Delta $, $Y=\mathcal{Y}_n$  satisfies the invariance condition
\begin{equation} \label{eq_delta_analitic_tors_edo}
	X \circ (\MK_n + \Delta, t) - \partial_{(u, \theta)} (\MK_n + \Delta) \cdot Y - \partial_t (\MK_n + \Delta) = 0.
\end{equation}
To be able to deal with equation \eqref{eq_delta_analitic_tors_edo} in a suitable space of analytic functions, we rewrite the vector field \eqref{fnormal_tors_edo1} in terms of its hull function $\check X(x, y, \theta, \tau , \lambda) = X (x, y, \theta, t, \lambda)$,  with  $\tau = \nu t$ and $\nu \in \R^{d'}$, and similarly for the functions that appear in its  components. Hence, the corresponding differential equation reads
\begin{equation} \label{fnormal_edo_skew}
	\begin{pmatrix}
		\dot x \\
		\dot y \\
		\dot \theta 
	\end{pmatrix} =
	\begin{pmatrix}
		\check c(\theta, \tau, \lambda) y \\
		\check a_k(\theta, \tau, \lambda)x^k + \check A(x, y, \theta, \tau, \lambda)  \\
		\omega + \check d_p(\theta, \tau, \lambda)x^p + \check B(x, y, \theta, \tau, \lambda) 
	\end{pmatrix},
\end{equation}
where  $\check c : \T^d \times \T^{d'} \times \Lambda \to \R$, $\check c(\theta , \tau, \lambda ) = c (\theta, t, \lambda)$, and similarly for the other quantities. Now the vector field $\check X$ is defined in a domain of the form $U \times \T^{d+d'}$, and thus the new variables $(\theta, \tau)$ can be thought as angles.

We also introduce 
$$
\check \MK_n (u, \theta, \tau, \lambda) = 
\MK_n(u, \theta, t, \lambda) , \qquad \check Y (u, \theta, \tau, \lambda) =  Y (u, \theta, t, \lambda),  
$$ and 
$$
J (u, \theta, \tau, \lambda) = 
\begin{pmatrix}
	\check Y (u, \theta, \tau, \lambda) \\
	\nu
\end{pmatrix}.
$$
Therefore, equation \eqref{eq_delta_analitic_tors_edo} can be written as 
\begin{equation} \label{analitic_hipotesi_compacte}
	\check X \circ (\check \MK_n + \Delta, \tau) - D (\check \MK_n + \Delta) \cdot J = 0,
\end{equation}
and then we look for a solution $\Delta = \Delta (u, \theta, \tau, \lambda)$ with  $\Delta: [0, \, \rho)  \times \T^d \times \T^{d'}  \times \Lambda \to \R^2  \times \T^d$.

The proofs of Theorems \ref{teorema_analitic_tors_edo} and \ref{teorema_posteriori_tors_edo} are organized in a similar way as the ones of Theorems \ref{teorema_analitic_tors} and \ref{teorema_posteriori_tors}. 
As for the case of maps, we will rewrite the equation for $\Delta $ as the fixed point equation.

From Proposition \ref{prop_tors_edo_sim}, given $n$ there exist a map $\MK_n$ and a vector field $Y= \mathcal{Y}_n$ such that 
\begin{equation*} 
	X \circ (\MK_n, t) - \partial_{(u, \theta)}\MK_n \cdot Y - \partial_t \MK_n = \MG_n, 
\end{equation*}
or equivalently,
\begin{equation} \label{analitic_hipotesi_tors_edo}
	\check X \circ (\check \MK_n, \tau ) - D \check \MK_n \cdot J  = \check \MG_n, 
\end{equation}
where $\check \MG_n(u, \theta, \tau, \lambda) = (O(u^{n+k}), \,O(u^{n+2k-1}), \, O(u^{n+2p-1}))$.
Since we are looking for a stable manifold we will take the approximations corresponding to $\check Y= \check \MY_n$ with the coefficient $\ol Y_k^x (\lambda)<0$.

Summarizing, 
we look for $\rho>0$ and a map $\Delta: [0, \, \rho)  \times \T^{d+d'}  \times \Lambda \to \R^2 \times \T^d$, analytic on $(0,\rho) \times \T^{d+d'} \times \Lambda$ satisfying \eqref{analitic_hipotesi_compacte},
where $\check \MK_n$ and $J$ satisfy \eqref{analitic_hipotesi_tors_edo}. Moreover, we ask $\Delta$ to  be of the form $\Delta = (\Delta^x, \Delta^y, \Delta^\theta) = (O(u^n), \, O(u^{n+k-1}), O(u^{n+2p-k-1}))$.

Similarly as in Section \ref{sec-funcional-maps}, we write 
$$
 \check P(x, y, \theta, \tau, \lambda) = \check a_k (\theta,
\tau , \lambda)x^k + \check A(x, y, \theta, \tau, \lambda ),
$$ 
$$
 \check Q(x, y, \theta, \tau, \lambda ) = \check d_p (\theta,\tau , \lambda)x^p + \check B(x, y, \theta,\tau, \lambda).
$$ 
Then, using \eqref{analitic_hipotesi_tors_edo} we  can rewrite \eqref{analitic_hipotesi_compacte} as 
\begin{align}  \begin{split}\label{eqdelta_analitic_tors_edo}
		D \Delta^x \cdot J &= \check \MK_n^y [\check c\circ (\check \MK_n^\theta + \Delta^\theta, \tau ) - \check c \circ (\check \MK_n^\theta, \tau) ]  + \Delta^y \, c\circ(\check \MK_n^\theta + \Delta ^\theta, \tau) +  \check \MG_n^x ,  \\
		D \Delta^y \cdot J   & = P \circ (\check \MK_n + \Delta, \tau) - P \circ (\check \MK_n , \tau)+ \check \MG_n^y ,\\
		D \Delta^\theta \cdot J  &  = Q \circ (\check \MK_n + \Delta, \tau) - Q \circ (\check \MK_n , \tau)+ \check \MG_n^\theta.
	\end{split}
\end{align}
To deal with equation \eqref{eqdelta_analitic_tors_edo} we introduce function spaces and operators adapted to the vector field setting.

\begin{definition}
	Given a sector $S = S(\beta, \, \rho)$, $\sigma > 0 $ and $n \in \N$, let $\MZ_n$,   be the Banach space 
	\begin{align*}
		\mathcal{Z}_{n}  = \bigg\{ f : S \times \T^{d+d'}_\sigma  \times \Lambda_\C & \rightarrow \C \, | \ f \text{ real analytic,}  \\  &\|f\|_n: = \sup_{(u, \theta, \tau, \lambda) \in S \times \T^{d+d'}_\sigma  \times \Lambda_\C} \frac{|f(u,\theta, \tau, \lambda)|}{|u|^n} < \infty \bigg\},
	\end{align*}
with the norm  $\|\cdot \|_n$. 
\end{definition}
Actually, it is exactly the same space as $\MW_n$ with the functions depending on 
$(\theta, \tau)\in \T^{d+d'}$ instead of depending on 
$\theta\in \T^{d}$.

As for the case of  maps, we endow the product spaces $\prod_i \MZ_i$ with the product norm and we define 
$$
	\MZ_n^\times = \MZ_n \times \MZ_{n+k-1} \times \MZ_{n+2p-k-1} ^d.
$$
Next, we set equation \eqref{eqdelta_analitic_tors_edo} in a space of holomorphic functions defined in a domain $S(\beta, \, \rho) \times \T^{d+d'}_{\sigma'} \times \Lambda_\C$, and we look for $\Delta$ being a real analytic function of  complex variables. Concretely, to solve equation \eqref{eqdelta_analitic_tors_edo}, we will consider $n $ big enough and we will look for a solution, $\Delta \in \mathcal{B}_\alpha \subset \MZ_n^\times$, for some $\alpha >0$.

 To determine suitable values for $\alpha$  we proceed in the same way as in the case of maps.
We take 
 $$
 \alpha = \min \,  \big\{ \tfrac{1}{2}, \, \tfrac{b}{2}, \,  \tfrac{\tilde \sigma}{2} \big\},
 $$ 
 where $b,\tilde \sigma, \sigma'$ and $\rho$ have the same meaning as there.

\begin{definition} \label{def_S_tor_edo}
	Let $k \geq 2$, $n \geq 0$, $\beta <  \tfrac{\pi}{k-1} $ and let 
	$
	J : S (\beta, \rho) \times \T^{d+d'}_{\sigma'}  \to \C \times\R^{d+d'}_{\sigma'}
	$
	be an analytic vector field of the form 
	\begin{equation} \label{R_def_S}
		J(u, \theta, \tau)  = ( Y_k u^k + O(u^{k+1}),
		\,  \omega  , \,  \nu),
	\end{equation}
	with $ Y_k < 0$ and such that the term $O(u^{k+1})$ does not depend on $(\theta, \tau)$.
	
	We define 
	$\MS_{n,J}: \MZ_n \rightarrow  \MZ_n$, as the linear operator given by
	\begin{align*}
		\MS_{n,J}\, f = Df \cdot J = \partial_u f \cdot J^x + \partial_\theta f \cdot \omega + \partial_\tau f \cdot \nu.
	\end{align*} 
\end{definition}
Note that this operator has a similar notation to a linear operator used in the map setting but it is different. 

The following lemma concerns the properties of the flows of vector fields of the form \eqref{R_def_S}.

\begin{lemma} \label{lema_prop_flux} Let $J (u, \theta, \tau) $ be as in Definition \ref{def_S_tor_edo} and let  $\varphi_s = (\varphi^u_s,\varphi^\theta_s, \varphi^\tau_s)$ be its flow. 
	Then, $\varphi_s$ has the form 
	\begin{equation*} \label{p_flux_1}
		\varphi_s(u, \theta, t) = (\varphi^u_s(u), \, \theta + \omega s, \, \tau + \nu s), 
	\end{equation*}
	and, for any fixed $\mu \in (0, \,\,  (k-1) |Y_k | \cos \kappa)$, with $\kappa =  \frac{k-1}{2}\beta$,  there exists $\rho_1\in (0,\rho]$ small enough such that
	 $\varphi_s^u(u) \in S(\beta, \rho_1)$ for all $u \in S(\beta, \rho_1)$ and $s \in [0, \infty)$. Moreover, 
 \begin{equation}  \label{p_flux_2}
		| \varphi^u_s(u) |  \leq \frac{|u|}{(1 + s \mu |u|^{k-1})^{\frac{1}{k-1}}}, \qquad   \forall \, u \in S(\beta, \, \rho_1), \quad \forall \, s \in [0, \infty).
	\end{equation}
\end{lemma}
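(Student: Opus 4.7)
The approach exploits the product structure of $J$: by hypothesis, the last two components of $J$ are the constant vectors $\omega$ and $\nu$, while the first component $J^x(u,\theta,\tau)$ does not depend on $(\theta,\tau)$ (since the remainder $O(u^{k+1})$ is $(\theta,\tau)$-independent by the definition). Hence the flow decouples: $\varphi_s^\theta(\theta) = \theta + \omega s$ and $\varphi_s^\tau(\tau) = \tau + \nu s$ integrate trivially, and it remains only to analyse the one-dimensional autonomous complex ODE $\dot u = J^x(u) = Y_k u^k + r(u)$ with $r(u) = O(u^{k+1})$, showing both the positive invariance of $S(\beta,\rho_1)$ and the decay estimate \eqref{p_flux_2}.

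For the bound \eqref{p_flux_2} I would argue via the Lyapunov-type quantity $L(s) = |u(s)|^{-(k-1)}$. A direct computation gives $L'(s) = -(k-1)|u|^{-(k+1)}\mathrm{Re}(\bar u\,\dot u)$, and substituting $\dot u = Y_k u^k + r(u)$ yields $\mathrm{Re}(\bar u\,\dot u) = Y_k|u|^{k+1}\cos((k-1)\arg u) + \mathrm{Re}(\bar u\,r(u))$. Since $u\in S(\beta,\rho)$ forces $|(k-1)\arg u|<\kappa<\pi/2$, the leading term is bounded above by $-|Y_k|\cos\kappa\,|u|^{k+1}$, while the remainder is $O(|u|^{k+2})$. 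Choosing $\rho_1$ small enough so that the higher-order piece is absorbed into the spare margin between $\mu/(k-1)$ and $|Y_k|\cos\kappa$, one obtains $L'(s)\ge \mu$ on $S(\beta,\rho_1)$. Integrating from $0$ to $s$ gives $|u(s)|^{-(k-1)}\ge |u|^{-(k-1)} + \mu s$, which rearranges to the claimed bound \eqref{p_flux_2}.

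For the sector invariance, the estimate just obtained already shows that $|u(s)|$ is strictly decreasing while the trajectory remains in $S(\beta,\rho_1)$, so the modulus condition $|u(s)|<\rho_1$ is automatic. To control the argument I would compute $\tfrac{d}{ds}\arg u(s) = \mathrm{Im}(\dot u/u) = Y_k|u|^{k-1}\sin((k-1)\arg u) + O(|u|^k)$. On the boundary ray $\arg u = \beta/2$ the leading term equals $Y_k|u|^{k-1}\sin\kappa < 0$ (as $Y_k<0$ and $\sin\kappa>0$), and symmetrically it is positive on $\arg u = -\beta/2$. After further shrinking $\rho_1$ so that the $O(|u|^k)$ correction is dominated by this leading contribution, the vector field points strictly inward on both boundary rays of the sector; combined with the modulus decay and continuity of the flow, this rules out escape from $S(\beta,\rho_1)$ for any finite time, yielding positive invariance for all $s\in[0,\infty)$.

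The main obstacle is harmonizing the two smallness conditions on $\rho_1$: one must simultaneously ensure that the Lyapunov lower bound $L'\ge\mu$ holds and that the field is inward on $\partial S$, uniformly as the trajectory evolves. Both reduce to making the coefficient of $|u|$ in the remainders of $r(u)$ and $r(u)/u$ dominated by the leading constants $|Y_k|\cos\kappa$ and $|Y_k|\sin\kappa$ respectively, so a single sufficiently small $\rho_1$, whose value can be read off from $\mu$, $\kappa$, and a uniform bound on $r$ on a fixed compact neighbourhood of the origin, serves for both. Note that the consistency of the hypothesis $\mu\in(0,(k-1)|Y_k|\cos\kappa)$ with $\beta<\pi/(k-1)$ is precisely what guarantees a positive margin to exploit.
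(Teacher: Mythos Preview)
Your proposal is correct and follows essentially the same route as the paper: both arguments decouple the flow into its trivial $(\theta,\tau)$ part and the one-dimensional complex equation, then analyse the latter by separating modulus and argument. The paper does this by passing explicitly to polar coordinates $u=re^{i\varphi}$ and bounding $\dot r$ and $\dot\varphi$, whereas you compute the equivalent quantities $\tfrac{d}{ds}|u|^{-(k-1)}$ and $\tfrac{d}{ds}\arg u=\mathrm{Im}(\dot u/u)$ directly; the resulting differential inequalities and the integration step are identical.
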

\begin{proof}
By  definition, the time-$s$ flow of $J$ satisfies
\begin{equation} \label{def_flow_lema}
	\varphi_s(u, \theta, \tau) = (u, \theta, \tau) + \int_0^s J \circ \varphi_s \, ds,
\end{equation}
and thus,  we obtain 
$
\varphi_s^\theta(u, \theta, t)  = \theta + \omega s, \, \varphi_s^\tau(u, \theta, t)  = \tau + \nu s, 
$
and that $\varphi_s^u$ is independent of $\theta$ and $\tau$. 

Changing to complex polar coordinates, $u=re^{i\varphi}$, equation $\dot u =  Y_k u^k + O(u^{k+1})$ becomes
\begin{align}
\label{polars-r}
\dot r= Y_k \cos ((k-1)\varphi ) r^k + O(r^{k+1}), \\
\dot \varphi = Y_k \sin ((k-1)\varphi ) r^{k-1} + O(r^{k}).
\end{align}  	
In the domain $S(\beta, \rho) $,  $|(k-1)\varphi | <  \kappa <\pi/2$.
It is immediately checked that, if $\rho$ is small, on the boundary of $S$, the vector field points to the interior of $S$. Indeed, at $\varphi=\beta/2$, $\dot \varphi <0$; at $\varphi=-\beta/2$, $\dot \varphi >0$; and at $r=\rho$, $\dot r <0$.
For the last inequality we use that the  $O(r^{k+1})$ term in \eqref{polars-r} is less that $M r^{k+1} $ if $0<r<\rho $.  
We take $\tilde \mu$ such that $0<\mu<\tilde \mu < (k-1)|Y_k| \cos \kappa$ and  
$\rho_1 < \min\{ 1,\frac{\tilde \mu-\mu}{(k-1)M}\}$. 
Since $\cos ((k-1)\varphi ) > \cos \kappa >0$ we have 
$$
\dot r\le  Y_k \cos ((k-1)\varphi ) r^k + M r^{k+1}, \qquad  0<r<\rho_1.
$$
With the previous choices 
$$
\dot r\le  - \frac{\tilde \mu}{k-1}   r^k + M \rho r^k \le - \frac{ \mu}{k-1}   r^k  .
$$
Integrating the last inequality we obtain \eqref{p_flux_2}.
\end{proof}

The following lemma states  that   $\MS_{n,J}$ has a bounded right inverse and provides a bound of $\|\MS_{n,J}^{-1}\|$.

\begin{lemma} \label{invers_S_tor_edo}
	Given $k\geq 2$ and $n \geq 1$, the operator $\MS_{n,J} : \MZ_n \to \MZ_n $ has a bounded right inverse, 
	$$
	\MS_{n,J}^{-1} : \MZ_{n+k-1} \to \MZ_n,
	$$
	given by 
	\begin{equation} \label{sol_parabolic_tor_edo}
		\MS_{n,J}^{  -1} \, \eta =  - \int_{0}^\infty \, \eta \circ \varphi_s \, ds,  \qquad \eta \in \MZ_{n+k-1},
	\end{equation}
	where $\varphi_s$ denotes the time-$s$ flow of $J$.
	
	Moreover, for any fixed $\mu \in (0, \,\,  (k-1) |Y_k^x | \cos \kappa)$, with $\kappa =  \frac{k-1}{2}\beta$,
	there exists $\rho >0$  such that, taking $S(\beta, \, \rho) \times \T^{d+d'}_{\sigma'} $ as the domain of the functions  of $\MZ_{n+k-1}$,  
	we have 
	$$
	\|(\MS_{n,J})^{-1}\| \leq  \tfrac{1}{\mu} \, \tfrac{k-1}{n}.
	$$
\end{lemma}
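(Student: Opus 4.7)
The approach mirrors the classical strategy of solving a first-order linear PDE (the transport equation $Df\cdot J=\eta$) by integrating along characteristics, which here are the trajectories of the flow $\varphi_s$ of $J$. The four steps are: (i) show the improper integral defining $\MS_{n,J}^{-1}\eta$ converges and satisfies the stated norm bound; (ii) confirm it lands in $\MZ_n$ (analyticity and reality); (iii) verify it is a genuine right inverse by differentiating along the flow; (iv) read off the operator norm.

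The core estimate is step (i). For $\eta\in\MZ_{n+k-1}$ one has the pointwise bound $|\eta(u,\theta,\tau,\lambda)|\le \|\eta\|_{n+k-1}|u|^{n+k-1}$. Combining this with the flow estimate \eqref{p_flux_2} from Lemma \ref{lema_prop_flux} and the fact, also contained in that lemma, that $\varphi_s^\theta$ and $\varphi_s^\tau$ are mere translations on the complex torus (so they stay in $\T^{d+d'}_{\sigma'}$), I get
$$
|\eta(\varphi_s(u,\theta,\tau,\lambda))|\le \|\eta\|_{n+k-1}\,\frac{|u|^{n+k-1}}{(1+s\mu|u|^{k-1})^{(n+k-1)/(k-1)}}.
$$
The substitution $v=1+s\mu|u|^{k-1}$ turns the integral $\int_0^\infty|\eta\circ\varphi_s|\,ds$ into a Beta-type integral whose value is exactly $\frac{k-1}{n\mu}|u|^n\|\eta\|_{n+k-1}$. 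This simultaneously gives convergence, the inclusion $\MS_{n,J}^{-1}\eta\in\MZ_n$, and the stated bound $\|\MS_{n,J}^{-1}\|\le\tfrac{k-1}{n\mu}$.

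For step (ii), the integrand $\eta\circ\varphi_s$ is jointly analytic in $(u,\theta,\tau,\lambda)$ on $S\times\T^{d+d'}_{\sigma'}\times\Lambda_\C$ because $\varphi_s$ depends analytically on initial conditions and parameters for each finite $s$, and $\eta$ is analytic. The dominated bound from step (i) provides uniform absolute convergence of the integral on compact subsets of $S\times\T^{d+d'}_{\sigma'}\times\Lambda_\C$ (truncating at $|u|\ge\epsilon$ if desired), so the limit is analytic by Morera's theorem applied slicewise or, equivalently, by the theorem on parametric integrals of holomorphic functions. Real analyticity follows because $\varphi_s$ sends real points to real points.

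Step (iii) uses the semigroup property $\varphi_{s+t}=\varphi_t\circ\varphi_s$: for any $t\ge 0$,
$$
(\MS_{n,J}^{-1}\eta)\circ\varphi_t(u,\theta,\tau,\lambda)=-\int_0^\infty\eta\circ\varphi_{s+t}\,ds=-\int_t^\infty\eta\circ\varphi_r\,dr,
$$
and differentiating with respect to $t$ at $t=0$ gives $D(\MS_{n,J}^{-1}\eta)\cdot J=\eta$, i.e.\ $\MS_{n,J}\circ\MS_{n,J}^{-1}=\Id$. The main obstacle I anticipate is purely technical, namely confirming that the bound from Lemma \ref{lema_prop_flux} is uniform enough across the whole sector (including as $u\to 0$, where the flow slows down and the trajectory stays long inside the sector): the explicit decay rate $(1+s\mu|u|^{k-1})^{-1/(k-1)}$ combined with the extra factor $|u|^{n+k-1}$ supplied by $\eta\in\MZ_{n+k-1}$ is precisely what makes the improper integral converge and delivers the $|u|^n$ growth required for membership in $\MZ_n$; nothing else in the argument is delicate.
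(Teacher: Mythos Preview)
Your proof is correct and covers the same ground as the paper's, with one genuine methodological difference in step (iii). To verify that $\MS_{n,J}^{-1}$ is a right inverse, the paper differentiates the integral \eqref{sol_parabolic_tor_edo} directly with respect to the spatial variables $(u,\theta,\tau)$ under the integral sign, and then must check the identity
\[
\int_0^\infty \partial_u(\eta\circ\varphi_s)\,ds\cdot J^x = \int_0^\infty \partial_u\eta\circ\varphi_s\,\partial_s\varphi_s^u\,ds,
\]
which it does by introducing the auxiliary function $h(s,u)=\partial_u\varphi_s^u\,\tfrac{J^x}{J^x\circ\varphi_s}$ and showing $\partial_s h=0$, $h(0,u)=1$ (this is the variational equation for the flow). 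Your route via the semigroup property, composing $\MS_{n,J}^{-1}\eta$ with $\varphi_t$ and differentiating in $t$ at $t=0$, bypasses this computation entirely and is cleaner; it trades the variational identity for the differentiability of $\MS_{n,J}^{-1}\eta$, which you have already secured in step (ii). The norm estimate and convergence argument in your step (i) are identical to the paper's.
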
 
From Lemma \ref{lema_prop_flux} we have that  $\varphi_s^u(u, \theta, \tau) $ belongs to $S(\beta, \rho)$ for all $s \in [0, \infty)$. Then  clearly one has that $\varphi_s \in S(\beta, \rho) \times \T^{d+d'}_{\sigma'}$ and the composition  $\eta \circ \varphi_s$ is well defined for all $s\ge 0$.
Using again Lemma \ref{lema_prop_flux} we have, for $\rho$  small enough,
\begin{align*}
	|\eta \circ \varphi_s (u, \theta, \tau)| \, 
	& \leq\,   \|\eta \|_{n+k-1} \, \frac{1}{(\mu s)^{(1+\tfrac{n}{k-1})}}, \qquad \forall \,  (u, \theta, \tau) \in S \times \T^{d+d'}_{\sigma'}, \quad \forall \, s \in [0, \infty),
\end{align*}
so that the integral \eqref{sol_parabolic_tor_edo}  converges uniformly on $S \times \T^{d+d'}_{\sigma'} $.

\begin{proof} To show  that \eqref{sol_parabolic_tor_edo} is a formal expression for a right inverse of $\MS_{n,J}$,  we recall that $\varphi_s(u, \theta, \tau) = (\varphi^u_s (u), \theta + \omega s, \tau + \nu s)$ is the time-$s$ flow of $J$. By differentiating under the integral sign one has
	\begin{align*}
		\MS_{n,J} \circ (\MS_{n,J})^{-1}  \eta &  =  -\int_0^\infty   \partial_u (\eta \circ \varphi_s) \, ds \,   J^x - \int_0^\infty  \partial_\theta (\eta \circ \varphi_s)  \, ds \cdot \omega  - \int_0^\infty \partial_\tau (\eta \circ \varphi_s)  \, ds \cdot \nu  .
	\end{align*}
	Moreover, the following relations hold true, 
	\begin{align} \label{relacions-S-edo}
		\begin{split} 
			& \int_0^\infty \partial_\theta (\eta \circ \varphi_s)  \, ds \cdot \omega  = \int_0^\infty \partial_\theta \eta  \circ \varphi_s  \, \partial_s \varphi_s^\theta \, ds, \\
			&\int_0^\infty \partial_\tau (\eta \circ \varphi_s)   \, ds \cdot \nu = \int_0^\infty \partial_\tau \eta  \circ \varphi_s  \, \partial_s \varphi_s^\tau \, ds,
			\\
			& \int_0^\infty \partial_u (\eta \circ \varphi_s)  \, ds  \,  J^x = \int_0^\infty \partial_u \eta  \circ \varphi_s  \, \partial_s \varphi_s^u \, ds.
		\end{split}
	\end{align}
	Indeed, the first two equalities  above are  immediate. To prove the third one, observe that we have
	\begin{equation} \label{relacions-S-edo-2}
	\int_0^\infty \partial_u (\eta \circ \varphi_s) \, J^x \, ds     = \int_0^\infty \partial_u \eta \circ \varphi_s \, \partial_u \varphi_s^u \, J^x \, \frac{J^x\circ \varphi_s}{J^x\circ \varphi_s}\,  ds = 	\int_0^\infty g(s, u) \, h (s, u) \, ds,
	\end{equation}
	where $g(s,u) = \partial_u \eta \circ \varphi_s \, J^x \circ \varphi_s$ and $h(s,u) = \partial_u \varphi_s^u \frac{J^x}{J^x \circ \varphi_s}$. 

	We have that $\partial_s h(s,u) = 0$ and then $	h(s, u) =h (0, u)  = 1$ for all $s\ge 0$.

	Therefore, from \eqref{relacions-S-edo-2} we have
	$$
	\int_0^\infty \partial_u (\eta \circ \varphi_s) \, J^x \, ds   = 	\int_0^\infty g(s, u)  \, ds =  \int_0^\infty \partial_u \eta  \circ \varphi_s  \, \partial_s \varphi_s^u \, ds,
	$$
so that the third equality of \eqref{relacions-S-edo} is proved. Finally, using \eqref{relacions-S-edo} we obtain
	\begin{align*}
		\MS_{n,J} \circ (\MS_{n,J})^{-1}  \eta 
		&=  -\int_0^\infty \partial_s (\eta \circ \varphi_s ) \, ds = \eta \circ \varphi_0 -\lim_{s \to \infty} \eta \circ \varphi_s = \eta.
	\end{align*}
	
Now we check  that $\MS^{-1}_{n,J}$ is bounded on $\MZ_{n+k-1}$. 
	From  \eqref{sol_parabolic_tor_edo} and Lemma \ref{lema_prop_flux}, one has
	\begin{align*}
		\|(\MS_{n,J})^{-1} \,\eta\|_n & \leq \, \sup_{S\times \T^{d+d'}_{\sigma'} } \,  \frac{1}{|u|^n} \,  \int_{0}^{\infty} |(\eta \circ \varphi_s)(u, \theta, \tau)| \, ds \\
		& \leq \|\eta \|_{n+k-1} \, \sup_{ S} \,  \frac{1}{|u|^n} \, \int_{0}^{\infty} \, \bigg{(} \frac{|u|}{(1+s\mu|u|^{k-1})^{1/k-1}}\bigg{)}^{n+k-1} \, ds \,  \leq  \,  \frac{1}{\mu} \, \frac{k-1}{n} \,  \| \eta  \|_{n+k-1}.
	\end{align*}
\end{proof}

\begin{definition} \label{def_N_analitic_tor_edo} 
	Let $X$ be a vector field satisfying the hypotheses of Theorem \ref{teorema_analitic_tors_edo}, and let $\check X (x, y, \theta, \tau) = X(x, y, \theta, t)$, defined in $U_\C \times \T^{d+d'}_\sigma$. 
	Given $n \geq 3$,  we introduce $\MN_{n, X} = (\MN^x_{n,X},\, \MN^y_{n,X}, \, \MN_{n, X}^\theta): \mathcal{B}_\alpha \subset \MZ_n^\times \to \MZ_{n+k-1}^\times$,  by 
	\begin{align*}
		\MN_{n, X}^x (f) &= \check \MK_n^y [\check c\circ (\check \MK_n^\theta + f^\theta, \tau) - \check c \circ (\check \MK_n^\theta, \tau ) ]  + f^y \, \check c\circ(\check \MK_n^\theta + f^\theta, \tau) +  \check \MG_n^x ,  \\
		\MN_{n, X}^y(f) & = P \circ (\check \MK_n + f, \tau) - P \circ (\check \MK_n, \tau) + \check \MG_n^y ,\\
		\MN_{n, X}^\theta(f)  &  = Q \circ (\check \MK_n + f, \tau) - Q \circ (\check \MK_n, \tau) + \check \MG_n^\theta.
	\end{align*}
\end{definition}

With the previously introduced parameters,
the operators $\MN_{n, X}$ are Lipschitz.

\begin{lemma} \label{lema_N_tors_edo} For each $n \geq 3$, there exists  a constant, $M_n >0$, such that
	\begin{align*}
		\Lip \MN^x_{n,X}  & \leq \sup_{(\theta, \tau) \in \T^{d+d'}_\sigma} |\check c(\theta, \tau )| +  M_n  \rho, \\
		\Lip  \MN^y_{n,X} & \leq    k\, \sup_{(\theta, \tau) \in \T^{d+d'}_\sigma}  |\check a_k(\theta,\tau)| +  M_n  \rho, \\
		\Lip \MN^\theta_{n,X} & \leq   p \,\sup_{(\theta, \tau) \in \T^{d+d'}_\sigma}  | \check d_p(\theta,\tau )| +  M_n  \rho,
	\end{align*}
	where $\rho$ is the radius of the sector $S(\beta, \, \rho)$. 
\end{lemma}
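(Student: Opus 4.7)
The proof is a transcription of the proof of Lemma \ref{lema_N_tors} to the vector field setting. The differences are cosmetic: the Banach spaces $\MZ_n$ play the role of $\MW_n$ and enjoy the same submultiplicative structure ($\MZ_m \cdot \MZ_n \hookrightarrow \MZ_{m+n}$), suprema are taken over $\T^{d+d'}_\sigma$ rather than over $\T^d_\sigma$, and the coefficients $c, a_k, d_p$ are replaced throughout by their hull functions $\check c, \check a_k, \check d_p$ which are bounded on the complex extension $\T^{d+d'}_\sigma$. The approximation $\check\MK_n$ provided by Theorem \ref{prop_tors_edo_sim} has its oscillatory parts bounded on $\T^{d+d'}_{\sigma'}$ as a consequence of the small divisors lemma (Lemma \ref{SDlemma_edo}) applied in the quasiperiodic setting, so its leading-order structure $\check\MK_n^x \in \MZ_2,\,\check\MK_n^y \in \MZ_{k+1},\,\check\MK_n^\theta-\theta\in \MZ_{2p-k+1}$ is preserved.

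For the $x$-component, I take $f,\tilde f \in \mathcal{B}_\alpha \subset \MZ_n^\times$ and apply the fundamental theorem of calculus along the segment joining $\tilde f$ to $f$:
\begin{align*}
\MN^x_{n,X}(f) - \MN^x_{n,X}(\tilde f) &= (\check \MK_n^y + f^y)\int_0^1 \partial_\theta \check c \circ (\check \MK_n^\theta + \tilde f^\theta + s(f^\theta - \tilde f^\theta),\tau)\,ds\,(f^\theta - \tilde f^\theta) \\
&\quad + \check c\circ(\check \MK_n^\theta + \tilde f^\theta,\tau)\,(f^y - \tilde f^y).
\end{align*}
The second summand yields the dominant contribution $\sup_{\T^{d+d'}_\sigma}|\check c|\,\|f^y - \tilde f^y\|_{n+k-1}$; the first, since $\check \MK_n^y + f^y \in \MZ_{k+1}$ (for $n$ large enough so that $f^y \in \MZ_{n+k-1}\subset \MZ_{k+1}$), is bounded by $M_n\,\rho^{2p-k+1}\,\|f^\theta - \tilde f^\theta\|_{n+2p-k-1}$, which is absorbed into $M_n\rho$ after passing to the $\MZ_n^\times$-norm. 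This gives the stated bound for $\Lip\MN^x_{n,X}$.

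For the $y$-component, I use the decomposition $\check P = \check a_k x^k + \check A$ with $\check A = y\,O(\|(x,y)\|^{k-1}) + O(\|(x,y)\|^{k+1})$, writing
\begin{align*}
\MN^y_{n,X}(f) - \MN^y_{n,X}(\tilde f) = T_1^y + T_2^y,
\end{align*}
where $T_1^y = \check a_k(\check\MK_n^\theta+f^\theta,\tau)(\check\MK_n^x+f^x)^k - \check a_k(\check\MK_n^\theta+\tilde f^\theta,\tau)(\check\MK_n^x+\tilde f^x)^k$ and $T_2^y = \int_0^1 D\check A\circ \xi_s\,ds\,(f - \tilde f)$ with $\xi_s = \check\MK_n+\tilde f+s(f-\tilde f)$. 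Splitting $T_1^y$ via Taylor's theorem into its $(f^x-\tilde f^x)$- and $(f^\theta-\tilde f^\theta)$-pieces, using $\xi_s^x \in \MZ_2$, one obtains the leading factor $k\sup_{\T^{d+d'}_\sigma}|\check a_k|\,(1+M_n\rho)$ in front of $\|f^x-\tilde f^x\|_n$, and an $O(\rho^{2p-k+1})$ coefficient in front of $\|f^\theta-\tilde f^\theta\|_{n+2p-k-1}$; the term $T_2^y$ lies in $\MZ_{n+2k-1}$ and therefore contributes to $M_n\rho$ after passing to $\MZ_{n+2k-2}$. The bound for $\Lip\MN^\theta_{n,X}$ is obtained identically with $\check d_p,p,\check B$ in place of $\check a_k,k,\check A$, yielding the factor $p\sup_{\T^{d+d'}_\sigma}|\check d_p| + M_n\rho$.

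No genuine obstacle is encountered: the presence of the additional angular coordinate $\tau$ affects only which torus the suprema are taken on and adds $\partial_\tau$-type terms that are already absorbed in the norm structure of $\MZ_n$. All order-counting (the exponents $n+k-1$, $n+2k-2$, $n+2p-2$ and the factor $\rho^{2p-k+1}$ arising from the $\theta$-component) is identical to that of the maps case, so the Lipschitz estimates transfer verbatim.
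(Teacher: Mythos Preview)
Your proposal is correct and follows exactly the approach the paper intends: the paper itself omits the proof, stating only that it is ``completely analogous to the one of Lemma \ref{lema_N_tors}, with the only difference that here the vector field $\check X$ and the functions of $\mathcal{B}_\alpha$ also depend on $\tau$.'' You have faithfully carried out that transcription, correctly identifying the cosmetic changes (hull functions, suprema over $\T^{d+d'}_\sigma$, the spaces $\MZ_n$ in place of $\MW_n$) and reproducing the order-counting from the maps case without error.
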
 
The proof is completely analogous to the one of Lemma \ref{lema_N_tors}, with the only difference that here the vector field $\check X$ and the functions of $\mathcal{B}_\alpha$ also depend on $\tau$. It will be omitted.

\begin{definition}
	For $n > 2p-k-1$, we denote by $ \MS_{n,J}^\times : \MZ_n^\times \to \MZ_n^\times $ the linear operator defined component-wise as $\MS_{n,J}^\times  = (\MS_{n,J}, \, \MS_{n+k-1,J}, \, ( \MS_{n+2p-k-1,J})^d )$.
\end{definition}

With these operators,  we can write equations \eqref{eqdelta_analitic_tors_edo} as 
\begin{equation*} 
\MS_{n,J}^\times \, \Delta = \MN_{n, X} (\Delta ).
\end{equation*}

Similarly as in Section \ref{sec-funcional-maps}, the inverse operator  $(\MS_{n,J}^\times)^{-1}$ is given by
 $$
(\MS_{n,J}^\times)^{-1} = (\MS_{n,J}^{-1}, \, \MS_{n+k-1,J}^{-1}, \, (\MS_{n+2p-k-1,J}^{-1})^d).
$$

\begin{definition} \label{def_T_analitic_edo} 
	Let $X$ be a vector field satisfying the hypotheses of Theorem \ref{teorema_analitic_tors_edo}, and let $\check X (x, y, \theta, \tau) = X(x, y, \theta, t)$, defined in $U_\C \times \T^d_\sigma \times \T^{d'}_\sigma$, $U_\C \subset \C^2$. 	Given $n \geq 3$, we define $\MT_{n, X} : \mathcal{B}_\alpha  \subset \MZ_n^\times \to \MZ_n^\times$ by 
	$$
	\MT_{n, X} =  (\MS_{n,J}^\times)^{-1} \circ \MN_{n, X}.
	$$
\end{definition}

\begin{lemma} \label{lema_contraccio_tors_edo}
	There exist $m_0 > 0$ and $\rho_0 >0$ such that if $\rho < \rho_0$, then, for every $n \geq m_0$, we have $\MT_{n, X} (\mathcal{B}_\alpha) \subseteq \mathcal{B}_\alpha$ and $\MT_{n, X}$ is a contraction operator in $\mathcal{B}_\alpha$. 
\end{lemma}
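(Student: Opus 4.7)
The plan is to mirror the argument of Lemma \ref{lema_contraccio_tors}, substituting the ingredients for the vector field case: the bound on $\|(\MS_{n,J})^{-1}\|$ from Lemma \ref{invers_S_tor_edo} and the Lipschitz bounds on $\MN_{n, X}$ from Lemma \ref{lema_N_tors_edo}. First, from the definition of the product norm on $\MZ_n^\times$ and the componentwise structure of $\MS_{n,J}^\times$, one obtains
\begin{align*}
\Lip \MT_{n, X} \leq \max\Big\{ & \|(\MS_{n,J})^{-1}\| \, \Lip \MN^x_{n,X}, \; \|(\MS_{n+k-1,J})^{-1}\| \, \Lip \MN^y_{n,X}, \\
& \|(\MS_{n+2p-k-1,J})^{-1}\| \, \Lip \MN^\theta_{n,X} \Big\}.
\end{align*}
Fixing $\mu \in (0, (k-1)|\ol Y_k^x| \cos \kappa)$ with $\kappa = \tfrac{k-1}{2}\beta$, Lemma \ref{invers_S_tor_edo} yields $\|(\MS_{m,J})^{-1}\| \leq \tfrac{k-1}{\mu m}$ for $m = n, n+k-1, n+2p-k-1$, and Lemma \ref{lema_N_tors_edo} bounds each $\Lip \MN^\bullet_{n,X}$ by a constant plus $M_n \rho$. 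Hence
\begin{align*}
\Lip \MT_{n,X} \leq \max\Big\{ & \tfrac{k-1}{\mu n}(\sup_{\T^{d+d'}_\sigma} |\check c| + M_n \rho), \; \tfrac{k-1}{\mu(n+k-1)}( k \sup_{\T^{d+d'}_\sigma} |\check a_k| + M_n \rho), \\
& \tfrac{k-1}{\mu(n+2p-k-1)}( p \sup_{\T^{d+d'}_\sigma} |\check d_p| + M_n \rho) \Big\}.
\end{align*}
Since the prefactors $\tfrac{k-1}{\mu m} \to 0$ as $m \to \infty$, one chooses $m_0$ so large that the three quantities above are strictly less than $1$ whenever $n \geq m_0$ and $\rho$ is bounded by a fixed $\rho_0$; shrinking $\rho_0$ further if needed to absorb the $M_n \rho$ terms then gives $\Lip \MT_{n,X} < 1$.

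It remains to verify $\MT_{n,X}(\mathcal{B}_\alpha) \subseteq \mathcal{B}_\alpha$. For $f \in \mathcal{B}_\alpha$ we split
$$
\|\MT_{n,X}(f)\|_{\MZ_n^\times} \leq \|\MT_{n,X}(f) - \MT_{n,X}(0)\|_{\MZ_n^\times} + \|\MT_{n,X}(0)\|_{\MZ_n^\times} \leq \alpha \Lip \MT_{n,X} + \|\MT_{n,X}(0)\|_{\MZ_n^\times}.
$$
Now $\MT_{n,X}(0) = (\MS_{n,J}^\times)^{-1}\MN_{n,X}(0) = (\MS_{n,J}^\times)^{-1} \check\MG_n$, and by construction (see \eqref{eq_prop1_tor_edo_sim} and Definition \ref{def_N_analitic_tor_edo}) the error $\check\MG_n$ belongs to $\MZ_{n+k} \times \MZ_{n+2k-1} \times \MZ_{n+2p-1}^d$, i.e.\ one order higher in $u$ than the target product space $\MZ_n^\times$. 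Therefore $\|\check\MG_n\|$ carries an extra factor of $\rho$, yielding that, for every $\varepsilon > 0$, $\|\MT_{n,X}(0)\|_{\MZ_n^\times} < \varepsilon$ provided $\rho$ is small enough. Choosing $\varepsilon \leq \alpha(1 - \Lip \MT_{n,X})$ gives $\|\MT_{n,X}(f)\|_{\MZ_n^\times} \leq \alpha$.

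The only genuinely non-routine point, compared with the map case, is that the bound on $\|(\MS_{n,J})^{-1}\|$ in Lemma \ref{invers_S_tor_edo} does not contain a $\rho^{k-1}$ term, so the smallness of the Lipschitz constant relies entirely on taking $n$ large, rather than on $\rho$ small; smallness of $\rho$ is only used to absorb the $M_n \rho$ contributions from $\MN_{n,X}$ and the $\check\MG_n$ factor. Once the parameters are chosen in the order ``fix $\mu$, then $n \geq m_0$, then $\rho < \rho_n \leq \rho_0$'', both the contraction and invariance properties follow simultaneously.
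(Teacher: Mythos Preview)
Your proof is correct and follows exactly the approach the paper intends: the paper's own ``proof'' of this lemma consists of the single sentence ``The proof is completely analogous to the one of Lemma \ref{lema_contraccio_tors} and will be omitted,'' and you have faithfully carried out that analogy, substituting the bound from Lemma \ref{invers_S_tor_edo} and the Lipschitz estimates from Lemma \ref{lema_N_tors_edo}. Your closing remark about the absence of the $\rho^{k-1}$ term in the inverse bound is accurate and a nice clarification of the minor structural difference from the map case.
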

The proof is completely analogous to the one of Lemma \ref{lema_contraccio_tors} and will be omitted.

	\section{Proofs of the main results}
	 \label{sec-dems-tors}
	 
	 \subsection{The case of maps}

\begin{proof}[Proof of Theorem \ref{teorema_analitic_tors}]
		Let $m_0$ be the integer provided by Lemma \ref{lema_contraccio_tors}, and let $n_0 = \max \{ m_0, \, k + 1\}$. We take the maps  $\MK_{n_0}$ and $R = \MR_{n_0}$ given by Proposition \ref{prop_simple_tors}, which satisfy 
		\begin{equation*} \label{dem_analitic_polinomi}
			\MG_{n_0}(u, \theta) = F ( \MK_{n_0}(u, \theta )) - \MK_{n_0} (R(u, \theta))=(O(u^{n_0+k}), \, O(u^{n_0+2k-1}), \, O(u^{n_0+2p-1})).
		\end{equation*} 
We will look for $\rho >0$ and for a differentiable function $\Delta : [0, \rho) \times \T^d\to \R^2 \times \T^d$, $\Delta$ analytic in $(0, \rho) \times \T^d$, satisfying
\begin{equation} \label{eq_delta_dem_maps_tor}
	F \circ (\MK_{n_0} + \Delta) - (\MK_{n_0} + \Delta) \circ R = 0.
\end{equation}
Next, consider the holomorphic extension of $F$ to a neighborhood $U_\C \times \T^d_\sigma$ of $(0,0) \times \T^d$, where $U_\C \subset \C^2$ contains the closed ball of radius $b>0$  and take
		$\alpha= \min \, \{\tfrac{1}{2}, \, \tfrac{b}{2}, \, \tfrac{\tilde \sigma}{2},\}$, with $0 < \tilde \sigma < \sigma$.
		With this setting we rewrite \eqref{eq_delta_dem_maps_tor} as
		\begin{align*}  
			\Delta^x \circ R - \Delta^x  &= \MK_n^y [c\circ (\MK_n^\theta + \Delta^\theta) - c \circ \MK_n^\theta ]  + \Delta^y \, c\circ(\MK_n^\theta + \Delta ^\theta) +  \MG_n^x ,  \\
			\Delta^y \circ R -  \Delta^y  & = P \circ (\MK_n + \Delta) - P \circ \MK_n + \MG_n^y ,\\
			\Delta^\theta \circ R - \Delta^\theta &  = Q \circ (\MK_n + \Delta) - Q \circ \MK_n + \MG_n^\theta, 
		\end{align*}
		with $\Delta \in \mathcal{B}_\alpha \subset \MW_n^ \times$,	or using the  operators defined in the previous section,
		\begin{equation*} 
			\Delta = \MT_{n_0, \, F} (\Delta), \qquad \Delta \in \mathcal{B}_\alpha.
		\end{equation*}
By Lemma \ref{lema_contraccio_tors}, since $n_0 \geq m_0$, we have that $\MT_{n_0, \, 	F}$ maps $\mathcal{B}_\alpha$ into itself and is a contraction. Then it has a unique fixed point, $\Delta^\infty \in \mathcal{B}_\alpha$.
		Note that this solution is unique once $\MK_{n_0}$ is fixed. 
		Finally, $K = \MK_{n_0} + \Delta^\infty$ satisfies the conditions in the statement.
		
		The  $C^1$ character of $K$ at the origin follows from the order condition of $K$ at 0.
	\end{proof}

	\begin{proof}[Proof of Theorem \ref{teorema_posteriori_tors}]
		Let $m_0$ be the integer provided by Lemma \ref{lema_contraccio_tors}, and let $n_0 = \max \{ m_0, \, k + 1\}$. We distinguish two cases: 
 the value of $n$ given in the statement is such that $n<n_0$ or $n\ge n_0$. In the first first case we start looking for a better approximation $\MK^*$ of the form 
$$
\MK^* (u, \theta) = \wh K(u, \theta ) + \sum _{j=n+1} ^{n_0+1} \wh K_j(u, \theta),
$$
		with  
\begin{equation} \label{formulaK*}
		\wh K_j(u, \theta ) = \begin{pmatrix}
			\ol K_{j}^x u^j + \wt K_{j+k-1}^x (\theta) u^{j+k-1} \\
			\ol K_{j+k-1}^y u^{j+k-1} + \wt K_{j+2k-2}^y (\theta) u^{j+2k-2} \\
			\ol K_{j+2p-k-1}^\theta u^{j+2p-k-1} +	\wt K_{j+2p-2}^\theta (\theta) u^{j+2p - 2}
		\end{pmatrix}
\end{equation}
		and for 
		$$
		\MR^* (u, \theta) = \wh R(u, \theta ) + \sum _{j=n+1} ^{n_0+1} \wh R_j(u),
		$$
		with 
		\begin{align}\label{formulaR*}
			\wh R_j^x (u) = \begin{cases}
				\delta_{j, k+1} \ol R_{2k-1}^x t^{2k-1} & \quad \text{ if } n \leq k, \\
				0  & \quad \text{ if } n > k, 
			\end{cases} \qquad  \ \ \
			\wh R_j^\theta (u) = 0.
		\end{align}
In the secon case, when $n\ge n_0$, we take    		
$\MK^* = \wh K+ \wh K_{n+1} $ and $\MR^* = \wh R+ \wh R_{n+1} $, with 
$\wh K_{n+1}$ and $ \wh R_{n+1} $ again as in \eqref{formulaK*} and \eqref{formulaR*}, respectively. 		
We introduce
\begin{align*}
	n^* = \begin{cases}
		n_0 & \quad \text{ if } n <n_0, \\
		n+1  & \quad \text{ if } n \ge n_0.
	\end{cases} 
\end{align*}
		The coefficients  $\wh K_{j}   $ and $\wh R_{j} $, $ n+1\le j \le n_0$,  are obtained imposing the condition
		\begin{equation*} \label{approx-hyp-n0}
			F ( \MK^* (u, \theta) )-   \MK^* ( \mathcal{R}^*(u, \theta ) )=  (O(u^{n^*+k}), O(u^{n^*+2k-1}), \, O(u^{n^* + 2p-1})) .
		\end{equation*}
Indeed, proceeding as in Proposition \ref{prop_simple_tors},  we obtain these coefficients iteratively.  We denote   $ \MK_j(u, \theta ) = \wh K(u, \theta) + \sum_{m=n+1} ^j \wh K_m (u, \theta) $ 
		and $\mathcal{R}_j(u, \theta ) = \wh R (u, \theta )  + \sum_{m=n+1}^j \wh R_m(u) $ for $j\ge n+1$.  In the iterative step we have 
		\begin{equation*} \label{approx-hyp-j}
			F ( \MK_j (u , \theta )) -   \MK_j (  \mathcal{R}_j( u, \theta )) =  (O(u^{j+k}), O(u^{j+2k-1}), \, O(u^{j+2p-1})) .
		\end{equation*}
		Then, 
		\begin{align*}
			F(\MK_j(u, \theta ) + \wh K_{j+1} (u, \theta ))  - & (\MK_j+ \wh K _{j+1})\circ (\MR_j(u, \theta) + \wh R_{j+1}(u) )  \\
			= &  F(\MK_j (u, \theta )) - \MK_j (\MR_j(u, \theta) ) \\
			& + DF(\MK_j (u, \theta ))\wh K_{j+1}(u, \theta) - \wh K_{j+1} (\MR_j(u, \theta )+\wh R_{j+1} (u)) \\
			& + \int_0^1 (1-s)  D^2F(\MK_j(u, \theta ) + s \wh K_{j+1}(u, \theta) ) \, ds \, (\wh K_{j+1}(u, \theta))^{\otimes 2} \\
			&  -D \MK_j(\MR_j(u, \theta ) )\wh R_{j+1} (u) \\
			& - \int_0^1 (1-s)  D^2\MK_j (\MR_j(u, \theta )  + s \wh R_{j+1}(u)) \, ds \, (\wh R_{j+1}(u))^{\otimes 2}  . 
			\end{align*}
		The condition 
		\begin{equation*} \label{approx-hyp-tor}
			F ( \MK_{j+1} (u, \theta )) -  \MK_{j+1} ( \mathcal{R}_{j+1}(u, \theta )) =  (O(u^{j+k+1}), O(u^{j+2k}), \, O(u^{j+2p})) 
		\end{equation*}
		leads to  equations \eqref{SDequations_induccio_sim} and  \eqref{sist_lineal_tors_sim} in Proposition \ref{prop_simple_tors},
		which we solve in the same way. 
		
		From this point we can proceed as in the proof of Theorem \ref{teorema_analitic_tors} and look for $\Delta \in  \mathcal{B}_\alpha  \subset \MW_n^\times$ such that 
		the pair $K= \MK^* + \Delta $, $R=\mathcal{R}^*$
		satisfies
		$ F \circ K = K \circ R$. 
		
		Finally, for the map $K$, we also have
		\begin{align*}
			K(u, \theta ) - \wh K(u, \theta)  & = \MK^*(u, \theta) - \wh K(u, \theta) + \Delta(u, \theta) \\
			& =  \sum _{j=n+1} ^{n^*} \wh K_j(u, \theta) + \Delta(u, \theta) \\
			& = (O(u^{n+1}), O(u^{n+k}), \, O(u^{n+2p-k}))  +(O(u^{n^*}), O(u^{n^*+k-1}), \, O(u^{n^*+2p-k-1})).
		\end{align*} 
	Since $n^* \ge n+1$ we have  $n+2p - k \leq n^* + 2p - k - 1$, 
		and therefore, 
		$$
		K(u, \theta ) - \wh K(u, \theta) = (O(u^{n+1}), O(u^{n+k}), \, O(u^{n+2p-k})).
		$$
For the map $R$ we have
		\begin{align*}
			R(t, \theta) - \wh R(u, \theta) &  = 	\MR^*(u, \theta) - \wh R(u, \theta) = \sum _{j=n+1} ^{n^*} \wh R_j(u)  =\left\{ \begin{array}{ll}
				(  O(u^{2k-1}) ,  0 ) & \quad \text{  if }\; n\le k, \\
				(0, 0)  & \quad \text{ if } \; n>k.
			\end{array} \right.
		\end{align*}
	\end{proof}
	
\subsection{The case of vector fields}

\begin{proof}[Proof of Theorem \ref{teorema_analitic_tors_edo}]
	Let $m_0$ be the integer provided by Lemma \ref{lema_contraccio_tors_edo}, and let $n_0 = \max \{ m_0, \, k + 1\}$. We take the approximations $\MK_{n_0}$ and $Y = \MY_{n_0}$ given by Proposition \ref{prop_tors_edo_sim}, which satisfy 
	\begin{align*}
		\MG_{n_0}(u, \theta, t) & = X ( \MK_{n_0}(u, \theta, t ), t) -	\partial_{(u, \theta)}\MK_{n_0} (u, \theta, t) \cdot Y(u, \theta, t) - \partial_t \MK_{n_0} (u, \theta, t)\\
		& =(O(u^{n_0+k}), \, O(u^{n_0+2k-1}), \, O(u^{n_0+2p-1})).
	\end{align*} 
	We will look for $\rho >0$ and a function $\Delta : [0, \rho) \times \T^d \times \R \to \R^2 \times \T^d$, $\Delta$ analytic in $(0, \rho) \times \T^d \times \R$, satisfying
	\begin{equation} \label{eqdelta_dem_tor_edo}
		X \circ (\MK_{n_0} + \Delta, t) - \partial_{(u, \theta)} (\MK_{n_0} + \Delta) \cdot Y - \partial_t (\MK_{n_0} + \Delta) = 0.
	\end{equation}
Let $\check X (x, y, \theta, \tau) = X(x, y, \theta, t)$ be the hull function of $X$ and consider  the holomorphic extension of $\check X$ to a neighborhood $U_\C \times \T^{d+d'}_\sigma$ of   $(0,0)  \times \T^{d+d'}$, where $U_\C \subset \C^2$ contains the  closed ball of radius $b>0$, and we also take  
	$\alpha= \min \, \{\tfrac{1}{2}, \, \tfrac{b}{2}, \, \tfrac{\tilde \sigma}{2}\}$ with $0 < \tilde \sigma < \sigma$.
	This setting allows to rewrite \eqref{eqdelta_dem_tor_edo}  as
	\begin{align*}  
		D \Delta^x \cdot J &= \check \MK_n^y [\check c\circ (\check \MK_n^\theta + \Delta^\theta, \tau ) - \check c \circ (\check \MK_n^\theta, \tau) ]  + \Delta^y \, c\circ(\check \MK_n^\theta + \Delta ^\theta, \tau) +  \check \MG_n^x ,  \\
		D \Delta^y \cdot J   & = P \circ (\check \MK_n + \Delta, \tau) - P \circ (\check \MK_n , \tau)+ \check \MG_n^y ,\\
		D \Delta^\theta \cdot J  &  = Q \circ (\check \MK_n + \Delta, \tau) - Q \circ (\check \MK_n , \tau)+ \check \MG_n^\theta,
	\end{align*}
	with	$\Delta \in \mathcal{B}_\alpha \subset \MZ_n^\times$, or using the operators defined in the vector field setting,
	\begin{equation*}
		\Delta = \MT_{n_0,X} (\Delta), \qquad \Delta \in \mathcal{B}_\alpha.
	\end{equation*}
	By Lemma \ref{lema_contraccio_tors_edo}, since $n_0 \geq m_0$, we have that $\MT_{n_0,X}$ maps $\mathcal{B}_\alpha$ into itself and is a contraction. Then it has a unique fixed point, $\Delta^\infty \in \mathcal{B}_\alpha$.
	Note that this solution is unique once $\check \MK_{n_0}$ is fixed. 
	Finally we take $\wt \Delta^\infty (u, \theta, t) = \Delta^{\infty} (u, \theta, \tau)$, and then  $K = \MK_{n_0} + \wt \Delta^\infty$ satisfies the conditions in the statement.
	
Again, the  $C^1$ character of $K$ at the origin follows from the order condition of $K$ at $u= 0$.
\end{proof}

\begin{proof}[Proof of Theorem \ref{teorema_posteriori_tors_edo}] 
	The proof is completely analogous to the one of Theorem \ref{teorema_posteriori_tors}, taking into  account that now we are in the vector field setting. In the last step we use the same  argument as in the proof of Theorem \ref{teorema_analitic_tors_edo}.
\end{proof}

\section{Appendix}

\subsection{Proof of Theorem \ref{thm:unicitat}}

The proof consists in doing a number of changes of variables that transforms the map into a new map such that their $x$ and $y$-components are independent on the angles up to order $k$ included, and has the form of the maps studied in \cite{fontich99}.  
Then we can use the inequalities obtained in that paper to get the uniqueness. 

We write the map in the form 
\begin{equation} \label{forma_normal_tor-unicitat} 
	F
	\begin{pmatrix}
		x\\ y\\ \theta
	\end{pmatrix}
	= 
	\begin{pmatrix}
		x + c(\theta) y \\
		y + a_k(\theta)x^k + A_{k-1}(x, y, \theta)y + A_{k+1}(x, y, \theta)  \\
		\theta + \omega +d_p(\theta)x^p + B_{p-1}(x, y, \theta)y + B_{p+1}(x, y, \theta)
	\end{pmatrix},
\end{equation}	
where $A_{k-1}$ and $B_{p-1}$ are homogeneous polynomials of degree $k-1$ and $p-1$   in $x,\,y$, respectively, depending on $\theta$, and 
$A_{k+1}$ and $ B_{p+1}$ are function of order $k+1$ and $p+1$ respectively.
We recall that $2p>k-1$. Moreover, it is convenient to assume that $p\le k$. If not, terms of order 
$p$ can be put in a remainder of order $k+1$.

In this proof we will write $O_j$ for a term of order $j$ in the variables $x\,y $ and $O(x^\ell  y^m)$ for a term of order 
$x^\ell y^m$. Both terms may depend on $\theta$.

We start doing some steps of averaging. 
We consider changes of the form
\begin{align}
	\Phi_1(x,y,\theta) & = (x+\phi(\theta) x^\ell y^m, y, \theta), \label{canviphi1}\\ 
	\Phi_2(x,y,\theta) & = (x, y+\phi(\theta) x^\ell y^m, \theta), \label{canviphi2}\\
	\Phi_3(x,y,\theta) & = (x, y, \theta+\phi(\theta) x^\ell y^m)	\label{canviphi3},
\end{align}
to average the monomials of order $x^\ell y^m$ in the $x$, $y$ and $\theta $ components, respectively.  
These changes may introduce new terms of order bigger or equal than $\ell+m$ in any component. 
Below we do a study of the terms that appear.
First we do a change of the form \eqref{canviphi1} with $\ell=0$ and $m=1$
to average the term $c(\theta)y $. We obtain
\begin{align*}
	F^{(1)} 
	\begin{pmatrix}
		x  \\
		y \\
		\theta 
	\end{pmatrix}
	& =
	\begin{pmatrix}
		x + \phi(\theta) y + c(\theta) y - \phi(\hat \theta )(y+a_k(\theta)x^k+ yO_{k-1}+O_{k+1} )\\
		y + a_k(\theta)(x+\phi(\theta)  y)^k + A_{k-1}(\hat x, y, \theta)y + A_{k+1}(\hat x, y, \theta)  \\
		\theta + \omega +d_p(\theta)(x+\phi(\theta) y)^p + B_{p-1}(\hat x, y, \theta)y + B_{p+1}(\hat x, y, \theta)
	\end{pmatrix},
\end{align*}
where $\hat x = x + \phi(\theta)  y$ and 
$\hat \theta=\theta + \omega +d_p(\theta)(x+\phi(\theta) y)^p + B_{p-1}(\hat x, y, \theta)y + B_{p+1}(\hat x, y, \theta)$. 

We can rewrite the first component as 
\begin{align*}
	x & + \Big(\phi(\theta)  + c(\theta) -  \phi(\theta + \omega) \Big)y 
	+ \Big(\phi(\theta + \omega) - \phi(\hat \theta ) \Big)y
	- \phi(\hat \theta ) O(x^k) + yO_{k-1}+O_{k+1}, 
\end{align*}
and writing $c=\bar c + \tilde c$, by the small divisors lemma there exists 
a unique zero average function $\phi$ such that 
$$
\phi(\theta + \omega) - \phi(\theta)  = \tilde c(\theta)   
$$
and taking $\phi $ as such solution the first component becomes
\begin{align*}
	x & + \bar  c y 
	+ y (O_{p}+ O_{k-1})+O_{k} =: C_p( x, y, \theta)  y + C_k( x, y, \theta).
\end{align*}
The second and third components have the same structure and the same lower order terms 
$y + a_k(\theta)x^k $ and $ 
\theta + \omega +d_p(\theta)x^p$, respectively,  
as $F$. Thus we have
\begin{equation} \label{primer-pas-averaging} 
	F^{(1)}	\begin{pmatrix}
		x\\ y\\ \theta
	\end{pmatrix}
	= 
	\begin{pmatrix}
		x + \bar c y  +  C_p( x, y, \theta)  y + C_k( x, y, \theta) \\
		y + a_k(\theta)x^k + A^{}_{k-1}(x, y, \theta)y + A_{k+1}(x, y, \theta)  \\
		\theta + \omega +d_p(\theta)x^p + B_{p-1}(x, y, \theta)y + B_{p+1}(x, y, \theta)
	\end{pmatrix},
\end{equation}	
with new functions $A's$  and $B's$ of the same form as the ones in  
\eqref{forma_normal_tor-unicitat}.

The changes  $\Phi_1$,  $\Phi_3$ and  $\Phi_3$ applied to $F^{(1)}$ give new maps with the same structure and with coefficients averaged, provided we choose a suitable $\phi$. Concretely,
the change 	
$\Phi_1$, used for $\ell+m \ge p+1$, produces
\begin{align*}
	&\Phi_1^{-1} \circ F^{(1)} \circ \Phi_1
	\begin{pmatrix}
		x  \\
		y \\
		\theta 
	\end{pmatrix}
	\\
	= 
	& 
	\begin{pmatrix}
		x + \phi(\theta) x^\ell y^m + \bar c y ++ C_{p}(\hat x, y, \theta)y+ C_k(\hat x, y, \theta)- \phi(\hat \theta)(x+ \bar c y+O_{\ell+m} )^\ell(y+O(x^k)) ^m+O_{k+1} \\
		y + a_k(\theta)(x+\phi(\theta) x^\ell y^m)^k + A_{k-1}(\hat x, y, \theta)y + A_{k+1}(\hat x, y, \theta)  \\
		\theta + \omega +d_p(\theta)(x+\phi(\theta) x^\ell y^m)^p + B_{p-1}(\hat x, y, \theta)y + B_{p+1}(\hat x, y, \theta)
	\end{pmatrix},
\end{align*}
where $\hat x= x + \phi(\theta) x^\ell y^m$ and 
$\hat \theta = \theta + \omega +d_p(\theta)(x+\phi(\theta) x^\ell y^m)^p + B_{p-1}(\hat x, y, \theta)y + B_{p+1}(\hat x, y, \theta)$.

The change 	
$\Phi_2$, used for, used for $\ell+m = k$, produces
\begin{align*}
	&\Phi_2^{-1} \circ F^{(1)} \circ \Phi_2
	\begin{pmatrix}
		x  \\
		y \\
		\theta 
	\end{pmatrix}
	\\
	& =
	\begin{pmatrix}
		x + \bar c \hat y +C_{p}( x,\hat y, \theta)\hat y+ C_k( x, \hat y, \theta) \\
		y + \phi(\theta) x^\ell y^m+ a_k(\theta)x^k + A_{k-1}(x,\hat y, \theta)y + A_{k+1}( x, \hat y, \theta)  -\phi(\hat \theta) (x + \bar c \hat y )^\ell y ^m + O_{k+1}\\
		\theta + \omega +d_p(\theta)x^p + B_{p-1}( x, \hat y, \theta)\hat y + B_{p+1}(x, \hat y, \theta)
	\end{pmatrix},
\end{align*}
where $\hat y = y + \phi(\theta) x^\ell y^m$ and 
$\hat \theta= \theta + \omega +d_p(\theta)x^p + B_{p-1}( x, \hat y, \theta)\hat y + B_{p+1}(x, \hat y, \theta)$.

And the change 	
$\Phi_3$, used when $\ell+m \ge p$, produces
\begin{align*}
	&\Phi_3^{-1} \circ F^{(1)} \circ \Phi_3
	\begin{pmatrix}
		x  \\
		y \\
		\theta 
	\end{pmatrix}
	\\
	& =
	\begin{pmatrix}
		x + \bar c y + C_{p}(x, y, \hat \theta) y + C_{k}(x, y, \hat \theta) \\  
		y + a_k(\hat \theta)x ^k + A_{k-1}(x, y,\hat  \theta)y + A_{k+1}(x, y, \hat \theta)  \\
		\theta  + \phi(\theta) x^\ell y^m+ \omega +d_p(\hat \theta)x^p + B_{p-1}(x, y,\hat  \theta)y + B_{p+1}(x, y, \hat \theta)
		- \phi(\hat \theta+ \omega ) (x+\bar c y)^\ell y^m+O_{k} 
	\end{pmatrix},
\end{align*}
where $\hat \theta= \theta + \phi(\theta) x^\ell y^m$.

Now we do several changes of the form $\Phi_3$ to average the terms of order $p$ of the third component. 
This may introduce new terms of the same order but with a higher value of the exponent of $y$.  
In this procedure we use the small divisors lemma in a completely analogous way as we did to arrive to \eqref{primer-pas-averaging}. 
Thus, 
we start averaging the term $x^p$, then the term $x^{p-1}y $ and so on until the term $y^p$ which can be averaged without introducing new terms of order $p$. 
These changes introduce terms of order $yO_{2p}$ in the first component and terms of order $yO_{k+p-1}$ in the second one. Since $2p > k-1$, both kind of terms are of order $k+1$ or bigger.

Then we proceed doing changes of the form $\Phi_3$ and $\Phi_1$ to average the terms of order $p+1$ and higher starting with the terms $x^j$ and ending with $y^j$ when dealing with a degree $j$, $p+1\le j\le k$ in the first component. 
The changes $\Phi_1$, when considering the averaging of a term of order 
$x^\ell y ^m$ introduce new terms of order $x^{\ell+i} y ^{m-i}$, $i\ge 1$ (possibly depending on $\theta$). 
Moreover, when  $\ell+m= p+1$ introduce terms $yO_{k-1} $ in the third component. At order $\ell+m> p+1$ they introduce terms of order $O_{k+1}$ which can be forgotten. However they maintain the structure of the second component. Similarly, the changes $\Phi_3$, when averaging a term of order 
$x^\ell y ^m $ in the third component  add terms of order $x^{\ell +i} y^{m-i}$, $i\ge 1$ in the third component. Also, when $\ell+m= p $  they may introduce a term of order $k$ in the third component. That is why we proceed in the order indicated in the previous paragraph.     
Finally we do  changes of the form $\Phi_2$
to average the terms of ordre $k$ of the second component while do not change the already obtained terms of order less or equal than $k$.

After having done these changes we arrive to a map of the form
\begin{equation} \label{mapa-promitjat} 
	\wh F
	\begin{pmatrix}
		x\\ y\\ \theta
	\end{pmatrix}
	= 
	\begin{pmatrix}
		x + \bar c y +  \wh C_{p}(x,y)y+ c_k x^k+\wh C_{k+1}(x, y, \theta)  \\
		y + \bar a_k x^k + \wh A_{k-1}(x, y)y + \wh A_{k+1}(x, y, \theta)  \\
		\theta + \omega +\bar d_px^p + \wh B_{p-1}(x, y)y + \wh B_{k}(x, y, \theta)
	\end{pmatrix}.
\end{equation}

Now we do a change $(x, y, \theta)\mapsto (\bar c  x, y, \theta)$ which maintains the same form and changes the constant $\bar c$ to $ 1$.

Next, we consider the related two-dimensional map (independent of the angles)
\begin{equation} 
	G
	\begin{pmatrix}
		x\\ y
	\end{pmatrix}
	= 
	\begin{pmatrix}
		x + y +   \wh C_p(x,y)y + c_k x^k
		\\
		y + \bar a_k x^k + \wh A_{k-1}(x, y) y
	\end{pmatrix},
\end{equation}	
and we do the changes to transform it to the normal form 
$$
N
\begin{pmatrix}
	x\\ y
\end{pmatrix}
= 
\begin{pmatrix}
	x + y 
	\\
	y +   a_k x^k +\dots + b_\ell x^{\ell-1} y + \dots 
\end{pmatrix}
+ O_{k+1}
$$
given in \cite{fontich99}. The change to this normal form is known, and it is  described in detail in  \cite{fontich99}.
To arrive to the normal form one has to do
a sequence of changes of the form 
\begin{equation}\label{canviC}
	C
	\begin{pmatrix}
		\xi \\ \eta 
	\end{pmatrix}
	= 
	\begin{pmatrix}
		\xi + \Phi(\xi,  \eta)
		\\
		\eta  + \Psi(\xi,  \eta)
	\end{pmatrix},
\end{equation}
where $\Phi$
and $\Psi $ are homogeneous polynomials of degree $j\ge 2$ to remove 
as many monomials of degree $j$ as possible. 
The inverse is  
\begin{equation}\label{canviCinversa}
	C^{-1}
	\begin{pmatrix}
		\xi \\ \eta 
	\end{pmatrix}
	= 
	\begin{pmatrix}
		\xi - \Phi(\xi,  \eta)
		\\
		\eta  - \Psi(\xi,  \eta)
	\end{pmatrix} + O_{2j-1}.
\end{equation}
We will use changes of order $j\ge p+1$. Then $2j-1\ge 2p+1 >k$, so that in our computations these terms do not play any role.

With these changes one can remove all terms of order $j$ except, in general, the terms $x^j$ and $x^{j-1} y $ in the second component of the map.  
We claim that in our case we can remove all terms of the first component (except de linear ones) without adding new terms in the second component. Indeed, 
assume inductively that $G$ has the form
$$
G
\begin{pmatrix}
	x\\ y
\end{pmatrix}
= 
\begin{pmatrix}
	x + y + \sum_{i=j}^k E_{i}(x,y) 
	\\
	y + \bar a_k x^k +  A_{k-1}(x, y)  y
\end{pmatrix} + O_{k+1},
$$
where $E_{i}$ is a homogeneous polynomial of degree $p+1\le i< k$.
It very important to note that when $i\le k-1$, $E_{i}(x,y) = y O_{i-1}$.
Doing a change of the form \eqref{canviC} we obtain 
\begin{align*}
	C^{-1} & \circ G \circ C
	\begin{pmatrix}
		x\\ y
	\end{pmatrix}
	= 
	\begin{pmatrix}
		x + \Phi + y +\Psi - \Phi (x  + y  , y )+ \sum_{i=j}^k E_{i}(x   ,y )
		\\
		y  +\Psi -  \Psi (x   + y  , y )
	\end{pmatrix}	+  O_{k+1}	.
\end{align*}
If we want to remove all terms of order $j$ we need to solve
\begin{align*}
	\Phi(x   ,y )  +\Psi(x   ,y ) - \Phi (x  + y  , y )+ E_j(x   ,y )
	& =0,\\
	\Psi(x   ,y ) -  \Psi (x   + y  , y )
	& =0.
\end{align*}
Since these equations are for homogeneous polynomiasl of degree $j$, actually we have a linear system for the coefficients of $\Phi$ and $\Psi$, 
given the coefficients of $E_j$. This system is studied in detail in \cite{fontich99}. We emphasize that here $E_j$ does not contain the term $x^j$. Then we can take $\Psi =0$ and then solve the first equation taking into account the mentioned property. 

When $j=k$ we have 
\begin{align*}
	\Phi(x   ,y )  +\Psi(x   ,y ) - \Phi (x  + y  , y )+ E_k(x   ,y )
	& =0,\\
	\Psi(x   ,y ) -  \Psi (x   + y  , y ) + a_kx^k + \wh A_{k-1}(x,y)y
	& =0.
\end{align*}
Now we are in the general situation and we get that the order $k$ terms of the normal form are
$$
\begin{pmatrix}
	0\\
	a_kx^k +b_{k-1}x^{k-1}y
\end{pmatrix}.
$$
In Section 4 of \cite{fontich99} it is proved that the (parabolic) stable manifold of the map 
$$
\begin{pmatrix}
	x+y \\
	y+	a_kx^k +b_{k-1}x^{k-1}y 
\end{pmatrix} + O_{k+1}
$$
is unique. Let $C$ be the change that  puts $G$ in the normal form $N$.
Consider the change $(x,y,\theta ) \mapsto (C(x,y),\theta) $ that tranforms 
the map $\wh F$ in \eqref{mapa-promitjat} into a new map 
\begin{equation} \label{mapa-promitjatinormalitzat} 
	\check  F
	\begin{pmatrix}
		x\\ y\\ \theta
	\end{pmatrix}
	= 
	\begin{pmatrix}
		x + \bar c y +  \check  C_{k+1}(x, y, \theta)  \\
		y + \bar a_k x^k + b_{k-1}x^{k-1} y + \check  A_{k+1}(x, y, \theta)  \\
		\theta + \omega +\bar d_px^p + \check  B_{p-1}(x, y)y + \check  B_{k}(x, y, \theta)
	\end{pmatrix},
\end{equation}
The remainders of the first and second components are of order $k+1$ and uniformly bounded with respect to $\theta$. 
Then, all bounds of Section 4 of  \cite{fontich99} are also valid here
for the first and second components of the iterates of $\check F$ and we have uniqueness of the stable manifold of $\check F$. 
Undoing the (close to the identity) change, the stable manifold of $F$ is also unique.

 \subsection{Proof of Theorem \ref{teorema_analitic_helicoure}}
 \label{App-A1}

The proof of Theorem \ref{teorema_analitic_helicoure} is completely analogous to the one of Theorem \ref{teorema_analitic_tors_edo}. However, for the convenience of the reader we will sketch here an overview of the proof and the spaces and operators that have to be used.

An analogous argument to the one of the proof of Proposition \ref{prop_tors_edo_sim} provides the expressions of the first coefficients of the parameteriaztions of $K$ and $Y$, namely those given in \eqref{coefs1_th_helicoure}.

Proceeding inductively  as described in the proof of Theorem \ref{prop_tors_edo_sim} we obtain that  given $n$ there exist  $\MK_n$ and $ \mathcal{Y}_n = Y$ such that 
\begin{equation}  \label{anex_una}
	X \circ \MK_n - D \MK_n \cdot Y  = \MG_n,
\end{equation}
with $\MG_n(u, \theta) = (O(u^{n+2}), \,O(u^{n+3}), O(u^{n+2}))$.

Then  we look for  a  $C^1$ function,  $\Delta = \Delta (u, \theta)$, $\Delta: [0, \, \rho)  \times \T^d   \to \R^2$, analytic in $(0, \, \rho)  \times \T^d $, satisfying
\begin{equation} \label{anex_dues} 
	\ X \circ ( \MK_n + \Delta) - D (\MK_n + \Delta) \cdot Y = 0.
\end{equation}
Moreover, we ask $\Delta$ to  be of the form $\Delta = (\Delta^x, \Delta^y, \Delta^\theta) = (O(u^n), \, O(u^{n+1}),  \, O(u^{n}))$.

Using \eqref{anex_una}  we  can rewrite \eqref{anex_dues} as 
\begin{equation} \label{funcional_anex}
	D \Delta \cdot Y =  X \circ ( \MK_n + \Delta  )  -  X \circ \MK_n - \MG_n,
\end{equation}
which is the functional equation that needs to be solved.

We fix $0< \beta <\pi$ and we consider the sector $S(\beta, \rho)$ for some $0 < \rho < 1$. We take the Banach spaces, for $n \in \N$, defined as
\begin{align*}
	\mathcal{Z}_{n} & = \bigg{\{} f : S \times \T^d_\sigma   \rightarrow \C \ | \ f \text{ real analytic,}  \  \|f\|_n: = \sup_{(u, \theta) \in S \times \T^d_\sigma } \frac{|f(u, \theta)|}{|u|^n} < \infty \bigg{\}},
\end{align*}
and we set equation \eqref{funcional_anex} in the ball
$	\mathcal{B}_\alpha \subset    \MZ_n^\times = \MZ_n \times \MZ_{n+1}  \times \MZ_n$, 
endowed with the product norm. 

We define the operators  $\MS_n : \MZ_n \to \MZ_n$ and $\MN_n : \MZ_{n}^\times \to \MZ_{n+1}^\times$ analogously  as in Definitions \ref{def_S_tor_edo}  and \ref{def_N_analitic_tor_edo}, and we obtain the  bounds 
$$
\| \MS_n^ {-1} \| \leq \frac{1}{\mu \, n}, \qquad \mu \in (0, 2 \ol Y_2^x  \cos(\beta/2)), \qquad  n \geq 1,
$$
and 
\begin{align*}
	&\Lip  \MN_n^x \leq  \sup_{\theta \in \T^d_{\sigma}} |c(\theta)| + M_n \rho, \\
	&\Lip \MN_n^y \leq  \max\{\sup_{\theta \in \T^d_{\sigma}} |b(\theta)|, \sup_{\theta \in \T^d_{\sigma}} b(\theta)/2c(\theta) \} + M_n \rho, \\
	&\Lip  \MN_n^\theta \leq \sup_{\theta \in \T^d_{\sigma}} |d(\theta)| + M_n \rho. 
\end{align*}
Finally, we have that $\MT_n = \MS_n^{-1} \circ \MN_n$ is contractive in $\mathcal{B}_\alpha$, which provides a solution, $\Delta$, to \eqref{anex_dues} and concludes the proof of Theorem \ref{teorema_analitic_helicoure}.

	\subsection{Unstable manifolds} \label{subsec_inestable_tors}

The results of this paper concern the existence on stable invariant manifolds. However, completely analogous results to Theorems \ref{teorema_analitic_tors} and \ref{teorema_analitic_tors_edo} hold true for the existence of unstable manifolds assuming that $\ol R_k^x >0$ and $\ol Y_K^x>0$, respectively. Moreover, a formal approximation $\MK_n$ obtained in Theorems \ref{prop_simple_tors} and \ref{prop_tors_edo_sim}, with $\ol R_k^x >0$ and $\ol Y_K^x>0$, respectively, is an approximation of a parameterization of a true unstable manifold.

The results of existence of unstable manifolds for vector fields are obtained by just revesing time $t \mapsto -t$ and applying the results of existence to the new vector field, as we already have done in the applications in Section \ref{sec-appl-tors}.

For the case of maps, if F satisfies the hypotheses of Theorem \ref{teorema_analitic_tors}, the results for the unstable manifolds are obtained from the stated theorem without having to compute explicitely the inverse map $F^{-1}$. We show it in this section.

	To clarify the notation, we will refer to $\MK_n^-$ and $\MR_n^-$ as  approximations of the parametrizations obtained in Proposition \ref{prop_simple_tors} corresponding to the stable manifold and the restricted dynamics on it (namely, with $\ol R_k^x <0$), and to $\MK_n^+$ and $\MR_n^+$ as the parameterizations of the unstable manifold and the restricted dynamics inside it (with $\ol R_k^x >0$).
	
Next we show that the approximation $\MK_n^+$ provided in Proposition \ref{prop_simple_tors} is an approximation of a parameterization of a true unstable manifold, $\wh K^+$, of $F$, asymptotic to $\MT^d$. Moreover, the dynamics on $\wh K^+$ can be parameterized by a map $\wh R^+$ that is also approximated by $\MR_n^+$.  As in the stable case, such pairs of maps also satisfy 
	$$
	\wh K^+(t, \theta) - \MK_n^+ (t, \theta) =  (O(t^{n+1}), O(t^{n+k}), O(t^{n+2p-k})),
	$$
	and 
	$$
	\wh R^+ (t, \theta) - \MR_n^+ (t, \theta ) =  \left\{ \begin{array}{ll}
		(O(t^{2k-1}), 0) & \text{ if }\; n\le k, \\
		(0,0) & \text{ if } \; n>k.
	\end{array} \right.
	$$
	Assume we have a map of the form \eqref{forma_normal_tor}. By Proposition \ref{prop_simple_tors}, there exist approximations $\MK_n^+$ and $\mathcal{R}_n^+$ such that
	\begin{equation} \label{aprox_inestable_tor}
		\MG_n = F \circ \MK_n^+- \MK_n^+ \circ \mathcal{R}_n^+ = (O(t^{n+k}), O(t^{n+2k-1}), O(t^{n+2p-1})),
	\end{equation}
	with 
	$$
	\mathcal{R}_n^+(t, \theta)=
	\begin{pmatrix}
		t + \ol R_k^xt^k + O(t^{k+1})  \\
		\theta + \omega
	\end{pmatrix} 
	$$ and $\ol R_k^x >0$, which means that $\mathcal{R}_n^+$ is a repellor in the normal directions of  $\MT$. Also, $\mathcal{R}_n^+$ is invertible and we have 
	$$
	(\mathcal{R}_n^+)^{-1}(t, \theta) = \begin{pmatrix}
		t - \ol R_k^x t^k + O(t^{k+1}) \\
		\theta - \omega
	\end{pmatrix},
	$$
	and
	$$
	F^{-1}\begin{pmatrix} x\\ y \\ \theta \end{pmatrix}
	= \begin{pmatrix}
		x - c(\theta - \omega)y +  c(\theta - \omega)a_k(\theta - \omega) (x-c(\theta - \omega)y)^k + \check A(x, y, \theta)\\
		y - a_k(\theta - \omega) (x-c(\theta - \omega)y)^k  + \check B(x, y, \theta) \\
		\theta - \omega - d_p(\theta - \omega)(x - c(\theta - \omega)y)^p + \check C(x, y, \theta)
	\end{pmatrix},
	$$
	with $\check A (x, y, \theta), \check B (x, y, \theta) = O(\|(x, y)\|^{k+1}) +yO(\|(x, y)\|^{k-1})$, and $\check C (x, y, \theta) = O(\|(x, y)\|^{p+1}) + yO(\|(x, y)\|^{p-1})$.
	
	Composing by $F^{-1}$ by the left in \eqref{aprox_inestable_tor} and using Taylor's Theorem, we get
	\begin{align} \begin{split} \label{taylor_unstable}
			\MK_n^+ & = 	F^{-1} \circ (\MK_n^+ \circ \MR_n^+ + \MG_n ) \\
			& = F^{-1} \circ (\MK_n^+ \circ \MR_n^+) + DF^{-1} \circ  (\MK_n^+ \circ \MR_n^+) \cdot \MG_n  + O(\MG_n^2),
	\end{split} \end{align}
	and then composing by $(\mathcal{R}_n^+)^{-1}$ by the right we obtain
	\begin{equation} \label{moviment_omega}
		F^{-1} \circ  \MK_n^+ - \MK_n^+ \circ (\mathcal{R}_n^+)^{-1} = (O(t^{n+k}), O(t^{n+2k-1}), O(t^{n+2p-1})).
	\end{equation}
There exists a change of variables $\phi$ that transforms  
$F^{-1}$ into  $H := \phi^{-1} \circ F^{-1} \circ \phi$ which is of the form 
	$$
	H \begin{pmatrix} x\\ y \\ \theta \end{pmatrix}
	= \begin{pmatrix}
		x + c(\theta )y \\
		y + a_k(\theta) x^k  + \check D(x, y, \theta) \\
		\theta - \omega - d_p(\theta )x^p + \check G(x, y, \theta)
	\end{pmatrix},
	$$
	where $\check D(x, y, \theta)$ and $\check G (x, y, \theta)$ satisfy the properties \eqref{condicions_fnormal}.
	Note that he map $H$ is of the form \eqref{forma_normal_tor} (with $\omega$ of opposite sign). Moreover, composing by $\phi^{-1}$ by the left in  \eqref{moviment_omega} and using Taylor's Theorem as in \eqref{taylor_unstable}, we get 
	$$
	\phi^{-1} \circ F^{-1} \circ \MK_n^+ -  \phi^{-1} \circ \MK_n^+ \circ (\MR_n^+)^{-1} =(O(t^{n+k}), O(t^{n+2k-1}), O(t^{n+2p-1})),
	$$
	which is equivalent to 
	\begin{align*}
		H  \circ \phi^{-1} \circ \MK_n^+ - \phi^{-1} \circ \MK_n^+ \circ (\mathcal{R}_n^+)^{-1} =  (O(t^{n+k}), O(t^{n+2k-1}), O(t^{n+2p-1})).
	\end{align*}
	Hence, $H,  \, \phi^{-1}\circ \MK_n^+$ and $(\MR_n^+)^{-1}$ are analytic maps that satisfy the hypotheses of Theorem \ref{teorema_posteriori_tors}, where here the vector of frequencies  is $-\omega$. Therefore, by Theorem \ref{teorema_posteriori_tors}, there exist a map $K^+:[0,\rho) \times \T^d \to \R^2 \times \T^d$,  analytic   in $(0, \rho) \times \T^d$, and an analytic  map $R^+: (-\rho, \rho)\times \T^d \to \R \times \T^d$  such that 
	\begin{equation} \label{invariancia_G_tor}
		H \circ K^+ = K^+ \circ R^+, 
	\end{equation}
	and moreover it holds that 
	\begin{equation} \label{apr_ca_inestable_tor}
		K^+ (t, \theta ) -  \phi^{-1} \MK_n^+(t, \theta ) =  (O(t^{n+1}), O(t^{n+k}), O(t^{n+2p-k})) ,
	\end{equation}
	\begin{equation} \label{erra_invers}
		R^+(t, \theta) - (\mathcal{R}_n^+)^{-1}(t, \theta) = \left\{ \begin{array}{ll}
			(O(t^{2k-1}), 0) & \text{ if }\; n\le k, \\
			(0,0) & \text{ if } \; n>k.
		\end{array} \right.
	\end{equation}
	Also,  composing by $\phi$ by the left in \eqref{invariancia_G_tor} we have 
	$$
	F^{-1} \circ \phi \circ K^+ = \phi \circ K^+ \circ R^+,
	$$
	which means that $\phi \circ K^+$ is a parameterization of a stable manifold of $F^{-1}$, and the restricted dynamics on this stable manifold is given by the map $R^+$, which, using \eqref{erra_invers}, is of the form 
	\begin{equation} \label{erra_2}
		R^+(t, \theta) = 
		\begin{pmatrix}
			t - \ol R_k^x t^k + O(t^{k+1}) \\
			\theta - \omega
		\end{pmatrix},
	\end{equation}
	with $\ol R_k^x >0$.
	
	As a consequence, $\phi \circ K^+$ is a parameterization of an unstable manifold of $F$, analytic in $(0, \rho) \times \T^d$, for some $\rho >0$.  Moreover,  composing by $\phi$ in \eqref{apr_ca_inestable_tor} and using Taylor's Theorem, we have 
	$$
	\phi(K^+(t, \theta))   - \MK_n^+(t, \theta) = (O(t^{n+1}), O(t^{n+k}), O(t^{n+2p-k})), 
	$$
	that is, $\phi \circ K^+$ is approximated by the parameterization $\MK_n^+$ obtained in Proposition \ref{prop_simple_tors}. Denoting $\wh K^+ := \phi \circ K^+$ we recover the notation used at the beginning of the section. 
	
	Finally,  note that since $R^+$ represents the restricted dynamics of $F^{-1}$ on the stable manifold $\phi \circ K^+$, then $(R^+)^{-1}$ represents the restricted dynamics of $F$ on the unstable manifold $\phi \circ K^+$. By the form of \eqref{erra_2} we have
	$$
	(R^+)^{-1}(t, \theta) = 
	\begin{pmatrix}
		t + \ol R_k^x t^k + O(t^{k+1}) \\
		\theta + \omega
	\end{pmatrix},
	$$
	with $\ol R_k^x >0$, and hence, finally, 
	\begin{equation*}
		(R^+)^{-1}(t, \theta) - \mathcal{R}_n^+(t, \theta) = \left\{ \begin{array}{ll}
			(O(t^{2k-1}), 0) & \text{ if }\; n\le k, \\
			(0,0) & \text{ if } \; n>k,
		\end{array} \right.
	\end{equation*}
	as we claimed at the beginning of the section.  Concretely, we recover the notation given at the beginning of the section denoting $\wh R^+ := (R^+)^{-1}$.
		
		\section*{Acknowledgements} 
		
	C. Cufí-Cabré has been supported by the Spanish Government grants MTM2016-77278-P (MINECO/ FEDER, UE), PID2019-104658GB-I00 (MICINN/FEDER, UE)  and BES-2017-081570, and by the Catalan Government grant 2021-SGR-00113. E. Fontich has been supported by the
grant PID2021-125535NB-I00 (MICINN/\-FEDER,UE).
This work has been also funded through the Severo Ochoa and María de Maeztu Program for Centers and Units of Excellence in R\&D (CEX2020-001084-M).

\end{document}